\documentclass[reqno,11pt]{amsart}
\textheight22cm \topmargin-0.3cm \oddsidemargin7mm
\evensidemargin7mm \textwidth14cm \headsep0.8cm \headheight0.4cm
\numberwithin{equation}{section}

\usepackage{verbatim}
\usepackage[utf8]{inputenc}
\usepackage[italian,english]{babel}
\usepackage{esint}
\usepackage{color}
\usepackage{mathrsfs}
\usepackage{cancel}
\usepackage{mathtools}
\usepackage{bigints}
\usepackage{calrsfs}
\usepackage{array}
\usepackage{graphicx}
\usepackage{float}
\mathtoolsset{showonlyrefs}
\numberwithin{equation}{section}

%

\newcommand{\R}{\mathbb{R}}

\newcommand{\Sf}{\mathbb{S}}










\newenvironment{system}
  {\left\lbrace\begin{array}{*{4}{c@{}>{{}}c<{{}}@{}}c}}
  {\end{array}\right.}




\newcommand{\restr}[1]{\lower3pt\hbox{$|_{#1}$}}

\newcommand{\dive}{{\mathrm{div}}}
\newcommand{\Haus}[1]{{\mathscr H}^{#1}}     
\newcommand{\la}{{\langle}}                  
\newcommand{\ra}{{\rangle}}

\newcommand{\nchi}{{\raise.3ex\hbox{$\chi$}}}






\newtheorem{theorem}{Theorem}[section]

\newtheorem{corollary}[theorem]{Corollary}
\newtheorem{lemma}[theorem]{Lemma}
\newtheorem{proposition}[theorem]{Proposition}

\newtheorem{notation}[theorem]{Notation}

\newtheorem{remark}[theorem]{Remark}




\DeclarePairedDelimiter\abs{\lvert}{\rvert}
\DeclarePairedDelimiter\bigabs{\Big\lvert}{\Big\rvert}
\DeclarePairedDelimiter\norm{\lVert}{\rVert}
\DeclarePairedDelimiter\bignorm{\Big\lVert}{\Big\rVert}




\newcommand{\ric}{\mathop {\rm Ric}\nolimits}



\begin{document}
\title{Geometric aspects of p-capacitary potentials}
\author[Mattia Fogagnolo]{Mattia Fogagnolo}
\address[Mattia Fogagnolo]{Università degli Studi di Trento, Via Sommarive 14, 38123 Povo (TN), Italy}
\email{mattia.fogagnolo@unitn.it}
\author[Lorenzo Mazzieri]{Lorenzo Mazzieri}
\address[Lorenzo Mazzieri]{Università degli Studi di Trento, Via Sommarive 14, 38123 Povo (TN), Italy}
\email{lorenzo.mazzieri@unitn.it}
\author[Andrea Pinamonti]{Andrea Pinamonti}
\address[Andrea Pinamonti]{Università degli Studi di Trento, Via Sommarive 14, 38123 Povo (TN), Italy}
\email{andrea.pinamonti@unitn.it}

\begin{abstract} 
We provide monotonicity formulas for solutions to the $p$-Laplace equation defined in the exterior of a convex domain. A number of analytic and geometric consequences are derived, including the classical Minkowski inequality as well as new characterizations of rotationally symmetric solutions and domains. The proofs rely on the conformal splitting technique introduced by the second author in collaboration with V. Agostiniani. 
\end{abstract}

\maketitle
\section{Introduction}
Given a convex bounded domain $\Omega \subset \R^n$, $n \geq 3$, with smooth boundary and $1<p<n$, we consider the associated $p$-\emph{capacitary potentials}, namely the unique solution $u$ to the following problem
\begin{align}\label{prob_ex}
\left\{\begin{array}{lll}
\Delta_p u=0 & \mbox{in} & \R^n\setminus \overline{\Omega}\\
\quad \,\,u=1 & \mbox{on} & \partial\Omega\\
\, u(x)\to 0 & \mbox{as} & |x|\to\infty,
\end{array}\right.
\end{align}
where $\Delta_p u$ is the $p$-Laplace operator, that is
\[
\Delta_p u=\dive(|D u|^{p-2}D u).
\] 
A classical result by Lewis \cite{Lew} guarantees that the solution is smooth and $|Du|\neq 0$ in $\R^n\setminus \overline{\Omega}$. We consider the following functions
\begin{equation}
\label{defvqp}
V_q^p(t)=
\begin{system}
   &\left(\dfrac{{C_p(\Omega)}}{t^{p-1}}\right)^{\frac{(n-1)(q-1)}{(n-p)}}\!\!\!\!\!\!\!\!\!\int_{\{u=t\}}\abs{Du}^{q(p-1)} \,d\sigma& \qquad \qquad &\mbox{if}&\qquad 0\leq q <\infty\\
	 & \sup\limits_{{\{u=t}\}} \dfrac{\abs{Du}}{u^{\frac{n-1}{n-p}}} & \qquad &\mbox{if}&\qquad q=\infty,
\end{system}
\end{equation}
where $\sigma = \mathcal{H}^{n-1}$ denotes the $(n-1)-$dimensional Hausdorff measure, $q\in [1,\infty)$ and $C_p(\Omega)$ is the (re-scaled) $p$-capacity of $\Omega$, defined as 
\begin{equation}
\label{capacity}
C_p(\Omega)=\inf\left\{\frac{1}{\Big(\frac{n-p}{p-1}\Big)^{p-1}\abs{\Sf^{n-1}}}\int_{\mathbb{R}^n} \!|D v|^p d\mu \ \bigg\vert \ v\in C^{\infty}_c(\mathbb{R}^n),\ v\geq 1\ \mbox{on}\ \Omega\right\}.
\end{equation}
We denote by $\mu$ the Lebesgue measure $\mathcal{L}^n$.
Notice that, if $\Omega$ is a ball of radius $R$, the only solution to problem \eqref{prob_ex} is given by
\begin{equation}
\label{urot}
u(x)=\bigg(\frac{R}{\abs{x}}\bigg)^{\!\!\frac{n-p}{p-1}},
\end{equation}
and straightforward computations show that $V_q^p$ and $V_\infty^p$ are actually constant in this case.
In fact, our main results show that for any choice of the parameters $(p, q)$ in
\begin{align}\label{qp}
\Lambda =\bigg\{(p, q)\in\R^2\bigg\vert \, 1<p< n,\quad \text{and}\quad q \,\geq \, 1 + \frac{(n-p)}{(p-1)(n-1)} \bigg\}
\end{align}
both $V_q^p$ and $V_{\infty}^p$ are monotone non-decreasing.
Moreover, the monotonicity is strict unless $\Omega$ is a ball, and, in particular, $u$ is rotationally symmetric. To be more precise, we state our main Monotonicity-Rigidity Theorem for the functions $V_q^p$, with $q < \infty$. 
\begin{theorem}
\label{mainth}
 Let $(p,q) \in \Lambda$ and let $u$ be a solution to \eqref{prob_ex}. Then $V_q^p$ is differentiable with derivative
\begin{equation}
\label{vqpder}
\frac{d{V_q^p}}{dt}(t)=(q-1)\left(\frac{{C_p}(\Omega)}{t^{p-1}}\right)^{\!\!\frac{(n-1)(q-1)}{(n-p)}}\!\!\!\!\!\!\int\limits_{\{u=t\}}\!\!\!\!\abs{Du}^{q(p-1)-1}\Big[H- \frac{(n-1)(p-1)}{(n-p)}\abs{D\log u}\Big]d\sigma.
\end{equation} 
where $H$ is the mean curvature of $\{u=t\}$ computed with respect to the unit normal vector $\nu=-Du/\abs{Du}$ .
For every $t\in (0, 1]$ such derivative satisfies
\begin{equation}
\label{quadratoni}
\begin{split}
\frac{d{V_q^p}}{dt}(t) = &{(q-1)}\!\!\left(\dfrac{{C_p(\Omega)}}{t^{p-1}}\right)^{\!\!\frac{(n-1)(q-1)}{n-p}}\!\! \!\!\!\!\!\int\limits_{\{u \geq t\}}\!\!\!\!\left(\frac{u}{t}\right)^{2- \frac{(n-1)(p-1)(q-1)}{(n-p)}} \abs{Du}^{q(p-1)-3} \,\, \times \\
&\times \bigg\{\left\vert D^2_T u - \frac{\Delta_T u}{n-1}g_T^{\R^n}\right\vert^2 
 + \Big(q(p-1) - 1\Big) \big\vert D_T \abs{Du}\big\vert^2 \\
&\,\,\,\,+\left[q - 1 - \frac{(n-p)}{(p-1)(n-1)}\right] \abs{Du}^2 \left[H - \frac{(n-1)(p-1)}{(n-p)} \abs{D \log u}\right]^2\bigg\} d\mu,
\end{split}
\end{equation}
where, for any $x \in \{u \geq t\}$ the function $H(x)$ is the mean curvature of $\{u = u(x)\}$ with respect to $\nu$, and the tangential elements are referred to these level sets (see Notation \ref{notation} below). In particular, the derivative of $V_q^p$ is always non-negative, and it vanishes for some $t\in (0, 1]$ if and only if $\Omega$ is a ball and $u$ is rotationally symmetric. 
\end{theorem}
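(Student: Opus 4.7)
The plan is to first establish the derivative formula \eqref{vqpder} by a direct first variation computation, then deduce the bulk representation \eqref{quadratoni} through a divergence identity built on the conformal splitting technique, and finally extract non-negativity and rigidity from the resulting sum-of-squares structure.

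\emph{Step 1: proof of \eqref{vqpder}.}
Setting $A(t):=(C_p(\Omega)/t^{p-1})^{(n-1)(q-1)/(n-p)}$ and $I(t):=\int_{\{u=t\}}\abs{Du}^{q(p-1)}d\sigma$, so that $V_q^p=A\,I$, the first variation formula for integrals on the level sets (moving at normal velocity $1/\abs{Du}$ in direction $-\nu$) yields
\begin{equation*}
I'(t)\;=\;-\int_{\{u=t\}}\frac{1}{\abs{Du}}\Big[q(p-1)\abs{Du}^{q(p-1)-1}\partial_\nu\abs{Du}+H\abs{Du}^{q(p-1)}\Big]d\sigma.
\end{equation*}
Expanding $\Delta_p u=0$ gives $\Delta u=(p-2)\partial_\nu\abs{Du}$, while the tangential/normal splitting $\Delta u=D^2u(\nu,\nu)+Hu_\nu+\Delta_Tu$, together with $u_\nu=-\abs{Du}$, $D^2u(\nu,\nu)=-\partial_\nu\abs{Du}$ and $\Delta_Tu=0$, gives $\Delta u=-\partial_\nu\abs{Du}-H\abs{Du}$. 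Comparing the two yields the key pointwise identity $\partial_\nu\abs{Du}=-H\abs{Du}/(p-1)$, which collapses $I'(t)$ to $(q-1)\int H\abs{Du}^{q(p-1)-1}d\sigma$. Computing $A'I+AI'$ and using that $\abs{Du}/t=\abs{D\log u}$ on $\{u=t\}$ reproduces \eqref{vqpder}.

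\emph{Step 2: proof of \eqref{quadratoni}.}
This is the heart of the argument and the main obstacle. I would construct a vector field $X$ on $\R^n\setminus\overline{\Omega}$ whose Euclidean divergence coincides, up to the overall factor $A(t)(q-1)(u/t)^{2-(n-1)(p-1)(q-1)/(n-p)}\abs{Du}^{q(p-1)-3}$, with the content of the curly brace of \eqref{quadratoni}, and whose flux through $\{u=t\}$ with respect to $\nu=-Du/\abs{Du}$ reproduces the integrand of \eqref{vqpder}. The construction proceeds by conformal splitting: one introduces $\varphi=-(p-1)(\log u)/(n-p)$ and a conformal metric $\tilde g=u^{2\beta}g_{\R^n}$ with the exponent $\beta$ that renders $(\R^n\setminus\overline{\Omega},\tilde g)$ asymptotically cylindrical, and one writes a Bochner--Weitzenbock identity for $\varphi$ in $\tilde g$. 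Unravelling this identity in Euclidean quantities produces precisely the three squared terms $|D^2_Tu-(\Delta_Tu/(n-1))g_T^{\R^n}|^2$, $|D_T\abs{Du}|^2$ and $[H-(n-1)(p-1)\abs{D\log u}/(n-p)]^2$ with the very coefficients $1$, $q(p-1)-1$, $q-1-(n-p)/((p-1)(n-1))$ appearing in \eqref{quadratoni}. Applying the divergence theorem on $\{u\geq t\}\cap B_R$ and letting $R\to\infty$ then gives \eqref{quadratoni}: the inner boundary contributes the integrand of \eqref{vqpder}, and the flux at infinity vanishes thanks to the asymptotics $u\sim c\abs{x}^{-(n-p)/(p-1)}$ and $\abs{Du}\sim c'\abs{x}^{-(n-p)/(p-1)-1}$, which make all relevant terms decay strictly faster than the $\abs{x}^{n-1}$ sphere area.

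\emph{Step 3: non-negativity and rigidity.}
A direct computation shows that $(p,q)\in\Lambda$ forces $q(p-1)-1\geq (p-1)(n-2)/(n-1)\geq 0$ and $q-1-(n-p)/((p-1)(n-1))\geq 0$, so every coefficient in the curly brace of \eqref{quadratoni} is non-negative and the integrand is a weighted sum of squares; hence $dV_q^p/dt\geq 0$. If the derivative vanishes at some $t_0\in(0,1]$, each square vanishes identically on $\{u\geq t_0\}$: the traceless tangential Hessian vanishes (so every level set in the annulus is totally umbilic), $\abs{Du}$ is constant on each level set, and $H=(n-1)(p-1)\abs{D\log u}/(n-p)$. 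Totally umbilic closed hypersurfaces of $\R^n$ are round spheres, and a short argument along the gradient flow of $u$ forces the one-parameter family of spheres foliating $\{u\geq t_0\}$ to be concentric; hence $u$ is radial on $\{u\geq t_0\}$. In particular $\partial\Omega=\{u=1\}$ is a round sphere, so $\Omega$ is a ball, and by uniqueness of the Dirichlet problem \eqref{prob_ex} the solution coincides with \eqref{urot} on the whole of $\R^n\setminus\overline{\Omega}$. The converse is immediate from the computation carried out just below \eqref{urot}.
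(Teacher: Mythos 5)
Your Step 1 is correct and is actually a nice direct Euclidean computation: the pointwise identity $\partial_\nu|Du|=-H|Du|/(p-1)$ (which follows from $\Delta_p u=0$ and the decomposition of $\Delta u$ into tangential and normal parts) collapses the first variation of $I(t)$ exactly as you claim, and the product rule with $A(t)$ reproduces \eqref{vqpder}. The paper arrives at the same formula by working with $\Psi_q^p$ in the conformal metric and then translating via \eqref{confV}, \eqref{meancurv1}, so your route is equivalent and marginally more elementary. (A small remark on terminology: what you call $\Delta_T u=0$ is the vanishing of the Laplace--Beltrami of $u|_{\{u=t\}}$; the paper reserves $\Delta_T u$ for the tangential trace of the Euclidean Hessian, which equals $-H|Du|\neq 0$. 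Your actual computation uses the correct identity, so this does not affect the result.)

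Step 2, however, is a description of the intended strategy rather than a proof, and it hides the heart of the argument. You assert that ``unravelling'' a Bochner--Weitzenb\"ock identity in the conformal metric ``produces precisely the three squared terms $\ldots$ with the very coefficients $1$, $q(p-1)-1$, $q-1-(n-p)/((p-1)(n-1))$,'' but this is exactly what one has to verify, and the verification is the content of Lemma \ref{bochp}, Proposition \ref{bochgen}, Proposition \ref{ineq}, and the integration-by-parts in Theorem \ref{idint}; none of that computation is present in your sketch. Moreover, your handling of the boundary term at infinity (``the flux at infinity vanishes thanks to the asymptotics'') is not established. The paper deliberately avoids a direct flux estimate: it first proves a \emph{localized} identity on compact annuli $\{s\le\psi\le S\}$ (equation \eqref{idintf1}), deduces the differential inequality $e^{-cS}(\Psi_q^p)'(S)\ge e^{-cs}(\Psi_q^p)'(s)$, and then uses a Colding--Minicozzi ODE argument combined with the boundedness of $\Psi_q^p$ (Lemma \ref{bound}) to conclude both that $(\Psi_q^p)'\le 0$ and that the boundary term at infinity disappears in the limit. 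You would at minimum need to justify integrability of the bulk integrand on $\{u\ge t\}$ and prove the boundary decay, which requires more than quoting the pointwise asymptotics.

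Step 3 has a genuine mathematical gap. If the derivative vanishes at some $t_0$, you conclude that ``each square vanishes identically,'' in particular $H=\frac{(n-1)(p-1)}{n-p}|D\log u|$. But the coefficient $q-1-\frac{n-p}{(p-1)(n-1)}$ of the third square is allowed to equal \emph{zero} in $\Lambda$ (the borderline case $q=1+\frac{n-p}{(p-1)(n-1)}$, which yields the Alexandrov--Fenchel endpoint and is in fact the most interesting case). When it vanishes, the vanishing of the derivative gives you no information about the third square, and your conclusion does not follow. The paper handles this case separately in the proof of Theorem \ref{mainconf}: using only the vanishing of the first two squares, Proposition \ref{ineq}(ii) yields a warped-product structure in the conformal metric, from which one reads off that the level sets of $\psi$ have constant mean curvature; comparing with the derivative formula \eqref{derif}, which still vanishes for all $s\ge s_0$, then forces $H_g\equiv 0$, and only then does one recover $\nabla|\nabla\psi|=0$. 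You need an analogous additional argument. Finally, the ``short argument along the gradient flow'' that the umbilic level spheres are concentric is also unsupported as stated; the paper proves this by first obtaining the half-cylinder isometry in the conformal picture and then transferring back.
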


\begin{notation}
\label{notation}
We explain here the meaning of the tangential elements appearing on the right hand side of formula \eqref{quadratoni}, as well as in the rest of the paper. For any $x$ in $\R^n \setminus \Omega$ we consider an orthonormal basis of $T_x\R^n$  of the form $\{e_1, \dots, e_{n-1}, e_n={Du}/\abs{Du}\}$. Consequently, we define
\[
{g_T^{\R^n}}_{\!\!\!\mid x} = \sum_{i=1}^{n-1} e^i \!\otimes e^i,
\]
where the raised indexes denote as usual the dual basis. For a function $f \in C^2(\R^n \setminus \Omega)$ we also define
\[
D_T f_{\mid x} = \sum_{i=1}^{n-1} (D_{e_i}f)_{\mid x}\, e^i,
\]
\[
D^2_T f_{\mid x} = \sum_{i, j = 1}^{n-1} D^2 f_{\mid x} (e_i, e_j)\, e^i\! \otimes e^j
\]
and finally
\[
\Delta_T f_{\mid x} = \Delta f_{\mid x} - D^2 f_{\mid x} (e_n, e_n).
\]
Notice in particular that $D^2_T f$ and $\Delta_T f$ must not be confused with the tangential Hessian and the tangential Laplacian induced by $g_T^{\R^n}$ on the level sets of $u$.
\end{notation}

A completely analogous Monotonicity-Rigidity Theorem can be stated for $V_\infty^p$. We point out that the monotonicity of $V_\infty^p$ is not completely new, since it is related to a maximum principle for the $P$-function contained in \cite{garofalosartori} (see Theorem 2.2, Theorem 3.1 and Lemma 5.1 therein). However, our proof is quite different, since it is  carried out in a conformal setting and inspired by Colding's work on monotonicity formulas for the Green's function of the Laplacian on Riemannian manifolds \cite{colding}.
\begin{theorem}
\label{mainthsup}
Let $u$ be a solution to \eqref{prob_ex}. Then, the following assertions hold true.
\begin{enumerate}
\item[i)] The function $V_\infty^p$ is monotone non-decreasing. Moreover, $V_\infty^p (t_1) = V_\infty^p(t_2)$ for some $t_1\neq t_2 \in (0, 1]$ if and only if $\Omega$ is a ball and $u$ is rotationally symmetric.
 \item[ii)]Let $x_t \in \{u=t\}$ be a maximum point of the  function $\abs{Du}/u^{(n-1)/(n-p)}$ on $\{u=t\}$. Then,
\begin{equation}
\label{supvder}
\left[H - \frac{(n-1)(p-1)}{(n-p)}\abs{D \log u}\right] (x_t) \geq 0,
\end{equation}
and equality is achieved for some $t \in (0, 1]$ if and only if $\Omega$ is a ball and $u$ is rotationally symmetric. Here $H$ denotes the mean curvature of $\{u=t\}$ computed with respect to the unit normal vector $\nu=-Du/\abs{Du}$.
\end{enumerate}
\end{theorem}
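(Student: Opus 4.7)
The natural $P$-function to study is $P := |Du|^2/u^{2(n-1)/(n-p)}$, which is constant on the rotational solution \eqref{urot} and satisfies $V_\infty^p(t) = \bigl(\max_{\{u=t\}} P\bigr)^{1/2}$. The plan is to show that $P$ obeys a one-sided maximum principle on the exterior of $\Omega$, and then to apply the Hopf boundary point lemma at a maximum point of $P$ on a level set of $u$.

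The first (and main) ingredient is an elliptic subsolution property for $P$: using $\Delta_p u = 0$ together with a Bochner-type identity, one derives, for a suitable exponent $a = a(n,p)$ fixed by the scaling that makes $P$ constant on \eqref{urot},
\[
\mathrm{div}\!\left(u^{a}\,|Du|^{p-2}\,DP\right) \;\geq\; 0 \qquad \text{in } \R^n\setminus\overline{\Omega},
\]
where the lower bound is in fact a non-negative sum of squares of the same type as those appearing in \eqref{quadratoni}. From the Agostiniani--Mazzieri perspective, this is the sub-harmonicity of $P$ with respect to the Laplace--Beltrami operator of an appropriate conformal metric. Combined with the standard asymptotic expansion of the $p$-capacitary potential --- which makes $P(x)$ tend to a finite positive limit as $|x|\to\infty$ --- the classical maximum principle in Phragm\'en--Lindel\"of form on the unbounded region $\{u\leq t\}$ will yield
\[
\sup_{\{u\leq t\}} P \;=\; \max_{\{u=t\}} P \qquad \text{for every } t\in(0,1].
\]
Since $\{u=s\}\subset\{u\leq t\}$ whenever $s\leq t$, this proves the monotonicity of $V_\infty^p$ claimed in part (i).

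For part (ii), fix $t\in(0,1]$ and let $x_t\in\{u=t\}$ be a maximum point of $W=|Du|/u^{(n-1)/(n-p)}$ on that level set. By the previous step, $x_t$ is automatically a maximum of $P$ on the closed unbounded set $\{u\leq t\}$, attained on its finite boundary. Hopf's boundary point lemma applied to the subsolution $P$ at $x_t$, with outward unit normal $\nu_{\mathrm{out}} = Du/|Du|$, then gives $\nu_{\mathrm{out}}\!\cdot\!DP(x_t) \geq 0$. Using the identity $(D^2u)(Du,Du) = H|Du|^3/(p-1)$, itself an immediate consequence of $\Delta_p u = 0$, a short direct computation reduces this normal derivative to
\[
\nu_{\mathrm{out}}\!\cdot\!DP \;=\; \frac{2|Du|^2}{(p-1)\,u^{2(n-1)/(n-p)}}\left[H - \frac{(n-1)(p-1)}{n-p}\,|D\log u|\right],
\]
which is exactly \eqref{supvder}. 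The rigidity assertions in both (i) and (ii) then follow from the strict form of Hopf's lemma: equality in \eqref{supvder} forces $\nu_{\mathrm{out}}\!\cdot\!DP(x_t) = 0$, hence $P$ constant on the connected component of $\{u\leq t\}$ containing $x_t$ and then on the whole exterior by real analyticity (here one uses $|Du|\neq 0$, by Lewis's regularity result); constancy of $P$ in turn forces the level sets of $u$ to be concentric spheres, so $\Omega$ is a ball and $u$ is of the form \eqref{urot}. The main obstacle in this program is the first step, namely the algebraic derivation of the subsolution inequality with the correct exponent $a$, which is precisely the point where the conformal splitting technique becomes indispensable.
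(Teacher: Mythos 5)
Your road map is essentially the paper's, translated back to the Euclidean variable: a $P$-function that is a subsolution of a linear elliptic operator, asymptotics to push the supremum onto the compact boundary, Hopf's lemma to get \eqref{supvder}, and the strong maximum principle for rigidity. Your computation of the normal derivative of $P$ at $x_t$ via \eqref{meancurvf} is correct and does reduce Hopf's inequality to \eqref{supvder}, and the route to rigidity (vanishing normal derivative forces $P$ constant, then analyticity) is also the paper's. In the conformal variable, $P^{1/2}$ is a constant multiple of $|\nabla\psi|_g$ by \eqref{cambgrad}, so the two formulations of the $P$-function are equivalent; the paper actually proves the maximum principle and sign of the derivative for $|\nabla\psi|_g^p$, which of course transfers to $|\nabla\psi|_g$ and hence to $P$.

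The genuine gap is the one you yourself flag and set aside: the subsolution inequality is never derived, and the form you posit for the operator is unlikely to be the right one. The paper (Lemma \ref{elliptic2}) proves $\mathcal{L}(|\nabla\psi|_g^p)\ge 0$ for the operator $\mathcal{L}$ of \eqref{l}, whose second-order part is $\Delta + (p-2)\nabla^2(\cdot)(\nu_g,\nu_g)$ — the anisotropic principal symbol of the linearization of $\Delta_p$ at $\psi$ — plus a first-order drift. It is precisely this anisotropy that makes the refined Bochner identity of Lemma \ref{bochp} (which is derived for the $p$-th power $|\nabla\psi|^p$, not for $|\nabla\psi|^2$) collapse, after Kato's inequality, to a manifestly nonnegative remainder for every $p\in(1,n)$. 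A scalar-coefficient divergence-form operator $\mathrm{div}\big(u^a|Du|^{p-2}D(\cdot)\big)$ has isotropic principal part; the $(p-2)\nabla^2(\nu,\nu)$ contribution is absent, and the cancellations occurring in \eqref{pBoch2} are lost. There is also no reason to expect the second power $|Du|^2$, rather than the appropriately scaled $p$-th power, to satisfy a clean inequality. In other words, what you call ``the point where the conformal splitting technique becomes indispensable'' is not a presentational choice: it encodes the actual proof of the subsolution property, including the correct exponent and the correct (anisotropic) operator, and without it the maximum-principle scaffolding has nothing to stand on. Finally, one small remark about the Phragm\'en--Lindel\"of step: since $P$ tends to a strictly positive limit at infinity, some care is needed to rule out the supremum escaping to infinity; the paper handles this explicitly by producing an $\mathcal{L}$-harmonic barrier $e^{\frac{n-p}{(n-2)(p-1)}\psi}$ (see \eqref{barrier}) and letting its coefficient tend to zero, rather than invoking a ready-made Phragm\'en--Lindel\"of theorem.
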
 
\begin{figure}[b]
\centering
\includegraphics[keepaspectratio, height = 13cm]{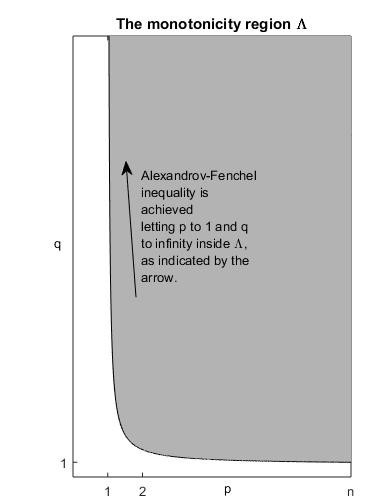}
\end{figure}
These facts imply a number of sharp estimates involving $u$ and $\Omega$, that can be 
gathered as capacity estimates, overdetermining Neumann conditions, Sphere Theorems and purely geometric inequalities. Except when explicitly indicated, these results are new for $p \neq 2$. They will be discussed in detail in Section 3. Dropping any attempt to be complete we observe that other results in the same spirit can be found for example in \cite{xiao,bianchini, garofalo, garofalosartori, magna1, magna2, poggesi} and reference therein. 

In particular, as a specific feature of the monotonicity given in Theorem \ref{mainth}, we obtain a new proof of the classical Minkowski inequality for smooth and convex domains, also known in literature as Alexandrov-Fenchel inequality.
\begin{theorem}[Alexandrov-Fenchel inequality]
\label{geometric1}
Let $\Omega\subset\R^n$ be a smooth, bounded and convex domain.
Then the following inequality holds:
\begin{equation}
\label{geoeq1}
\left(\frac{\abs{\Sf^{n-1}}}{\abs{\partial \Omega}}\right)^{\!\frac{1}{n-1}}\!\!\leq \fint\limits_{\partial\Omega}{\frac{H}{n-1}} d\sigma,
\end{equation}
where $H$ is the mean curvature of $\partial \Omega$ computed with respect to the exterior unit normal.
\end{theorem}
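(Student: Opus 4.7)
The plan is to combine the monotonicity $V_q^p(1)\geq V_q^p(0^+)$ with the non-negativity of $\tfrac{d V_q^p}{dt}(1)$ via Hölder's inequality, obtaining for every $(p,q)\in\Lambda$ an inequality of the form
\[
\int_{\partial\Omega} H^{\alpha}\,d\sigma \;\geq\; (n-1)^{\alpha}\abs{\Sf^{n-1}}\,C_p(\Omega)^{(n-1-\alpha)/(n-p)}, \qquad \alpha := q(p-1),
\]
and then to recover the linear Minkowski statement \eqref{geoeq1} by letting $p\to 1^+$ along the lower boundary of $\Lambda$.

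To obtain this inequality, I would first compute $\lim_{t\to 0^+} V_q^p(t)$ from the asymptotic expansion $u(x)\sim \left(C_p(\Omega)/\abs{x}^{n-p}\right)^{1/(p-1)}$ at infinity, which also yields $\abs{Du(x)}\sim \tfrac{n-p}{p-1}\, C_p(\Omega)^{-1/(n-p)} u(x)^{(n-1)/(n-p)}$ and makes the level sets asymptotically round. A direct calculation gives
\[
\lim_{t\to 0^+} V_q^p(t) \;=\; \abs{\Sf^{n-1}}\left(\tfrac{n-p}{p-1}\right)^{\alpha}C_p(\Omega)^{q},
\]
so that $V_q^p(1)\geq V_q^p(0^+)$ yields a lower bound on $\int_{\partial\Omega}\abs{Du}^\alpha d\sigma$. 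On the other hand, since $\abs{D\log u}=\abs{Du}$ on $\partial\Omega$, the inequality $\tfrac{d V_q^p}{dt}(1)\geq 0$ from \eqref{vqpder} reads
\[
\int_{\partial\Omega}\abs{Du}^{\alpha-1}H\,d\sigma \;\geq\; \frac{(n-1)(p-1)}{n-p}\int_{\partial\Omega}\abs{Du}^{\alpha}d\sigma.
\]
Applying Hölder's inequality with dual exponents $\alpha/(\alpha-1)$ and $\alpha$ (valid since $\alpha\geq 1$ on $\Lambda$) to the left-hand side, and then substituting the monotonicity-based lower bound on $\int\abs{Du}^\alpha d\sigma$, produces the target inequality above after the factors $(n-p)/(p-1)$ neatly cancel.

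To conclude, I would specialize to $q=1+\tfrac{n-p}{(p-1)(n-1)}$, the minimum value in $\Lambda$, obtaining $\alpha(p)=\tfrac{pn-2p+1}{n-1}$ and, remarkably, $(n-1-\alpha(p))/(n-p)=(n-2)/(n-1)$ independently of $p$. Letting $p\to 1^+$: $\alpha(p)\to 1$, so $\int_{\partial\Omega} H^{\alpha(p)}d\sigma\to \int_{\partial\Omega} H\,d\sigma$ by dominated convergence (as $H\in C^\infty(\partial\Omega)$ is bounded), while the convergence $C_p(\Omega)\to \abs{\partial\Omega}/\abs{\Sf^{n-1}}$ as $p\to 1^+$ (a consequence of the coarea identity $\int_{\{u=t\}}\abs{Du}^{p-1}d\sigma = \left((n-p)/(p-1)\right)^{p-1}\abs{\Sf^{n-1}}C_p(\Omega)$ combined with the pointwise limit $\abs{Du_p}^{p-1}\to 1$ on $\partial\Omega$) yields
\[
\int_{\partial\Omega} H\,d\sigma\;\geq\; (n-1)\abs{\Sf^{n-1}}^{1/(n-1)}\abs{\partial\Omega}^{(n-2)/(n-1)},
\]
which is equivalent to \eqref{geoeq1} after division by $(n-1)\abs{\partial\Omega}$. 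The main obstacle is making the $p\to 1^+$ limit fully rigorous, in particular the convergence of the $p$-capacity to the normalized perimeter and the uniform control needed to pass to the limit in the boundary integrals for smooth convex $\Omega$; the equality case in \eqref{geoeq1} is then inherited from the rigidity part of Theorem \ref{mainth}.
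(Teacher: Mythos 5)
Your argument tracks the paper's proof very closely: you derive \eqref{capf1} by combining $V_q^p(1)\geq \lim_{t\to 0^+}V_q^p(t)$ with $(V_q^p)'(1)\geq 0$ and H\"older's inequality, and then let $p\to 1^+$ along a curve in $\Lambda$ with $\alpha=q(p-1)\to 1$, invoking $C_p(\Omega)\to C_1(\Omega)=\abs{\partial\Omega}/\abs{\Sf^{n-1}}$. Your choice $q=1+\tfrac{n-p}{(p-1)(n-1)}$ (the lower edge of $\Lambda$) differs from the paper's $q=p/(p-1)$, but both force $\alpha\to 1$; the $p$-independent exponent $(n-2)/(n-1)$ on $C_p(\Omega)$ that your choice produces is a pleasant simplification, not a different method.

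The one substantive gap is the justification of $\lim_{p\to 1^+}C_p(\Omega)=\abs{\partial\Omega}/\abs{\Sf^{n-1}}$. Your proposed route — the coarea identity \eqref{capf} together with a pointwise convergence $\abs{Du_p}^{p-1}\to 1$ on $\partial\Omega$ — tacitly requires uniform two-sided gradient bounds on the $p$-capacitary potentials as $p\to 1^+$, which you do not establish and which are not trivial. The paper avoids the issue entirely by citing Meyers' theorem for $\lim_{p\to 1^+}C_p(\Omega)=C_1(\Omega)$ and Maz'ya's identity $C_1(\Omega)=\abs{\partial\Omega}/\abs{\Sf^{n-1}}$. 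A second, smaller flaw: your closing remark that the equality case of \eqref{geoeq1} is inherited from the rigidity part of Theorem \ref{mainth} does not follow, since that rigidity is stated at a fixed $(p,q)$, and equality in a limit of inequalities as $p\to 1^+$ need not imply equality at any finite $p$; the theorem as stated makes no equality claim, and the paper's proof derives none.
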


The above geometric inequality is deduced from Theorem \ref{mainth} roughly as follows. The global monotonicity of $V_q^p$ implies $\lim_{t \to 0^+} V_q^p (t) \leq V_q^p (1)$, and then, one deduces, using the asymptotics of $u$, that
\[
\abs{\Sf^{n-1}}^{\frac{1}{q(p-1)}}\left({{C_p}(\Omega)^{\frac{1}{p-1}}}\right)^{1-\frac{(n-1)(q-1)}{(n-p)q}}\leq\bignorm{\frac{p-1}{n-p}D(\log u)}_{L^{q(p-1)}(\partial \Omega)}.
\]
At this point, from \eqref{vqpder} and the H\"older inequality one obtains
\[
\frac{\abs{\Sf^{n-1}}}{\abs{\partial\Omega}}\leq{C_p}(\Omega)^{\frac{(p-1)q-(n-1)}{(n-p)}}\fint\limits_{\partial\Omega}\left({\frac{H}{n-1}}\right)^{q(p-1)}d\sigma.
\] 
Finally, Theorem \ref{geometric1} is proved simply by plugging $q = {p}/(p-1)$ in the above estimate, passing to the limit as $p \to 1^+$ and taking into account the fact that $C_1(\Omega) = \abs{\partial \Omega}/{\abs{\Sf^{n-1}}}$.

Alexandrov-Fenchel inequalities for convex domains were introduced in \cite{alex1} and \cite{alex2}, and in \cite{fenchel} for closed curves. Since then, they have been thoroughly studied. It is nowadays well known that they can be deduced using the Inverse Mean Curvature Flow (IMCF for short). Let us quote some of the main achievements of this technique in the present context. In \cite{pengfeiguan}, the IMCF has been used to obtain the Alexandrov-Fenchel inequality assuming that $\Omega$ is star-shaped and $\partial \Omega$ is mean convex.  
Such a procedure relies on the important works by Gerhardt (\cite{gerry}) and Urbas (\cite{urbas}), where they prove that under the above assumptions on $\Omega$ the solutions to the IMCF are defined for all positive times and approach Euclidean spheres as the time tends to infinity.
In \cite{huisken} and \cite{freire}, the Minkowski inequality is proved assuming that $\partial \Omega$ is outward minimizing, a property that implies mean convexity but not related to star-shapedness. It is important to notice that unlike the ones used in \cite{pengfeiguan}, this hypothesis does not force $\partial \Omega$ to have the topology of the sphere. Actually, a crucial step in this circle of ideas consists in showing that the monotonity of a suitable quantity is preserved also through singularities of the flow, that necessarily happen in the topologically non-spherical case. This is achieved by means of the techniques introduced in the celebrated work by Huisken and Ilmanen \cite{huisken-ilmanen} where a suitable notion of weak solutions for the IMCF is defined. 
Let us finally close this short excursus mentioning  \cite{chang}, where Alexandrov-Fenchel-type inequalities  are proved by means of optimal transport methods.

Of course our convexity assumption, widely used in literature  to study problems similar to \eqref{prob_ex} (see \cite{Ark,CNSXYZ,CS2} and references therein), is stronger than the ones discussed above. However, here we are focusing on a new self-contained method. 
A possible advantage of it is that the level set flow we are employing exists for any time and for any bounded $\Omega$.
Indeed, there always exists a weak solution to the exterior problem \eqref{prob_ex}, and this solution can be proved to be $C^{1, \alpha}_{\text{loc}}$.  For these regularity results, see \cite{dibenedetto, lewis-reg} for $1 < p < 2$, \cite{uhlenbeck, uraltseva, evans} for $p > 2$. 
In particular, our flow would make sense even without a mean convexity assumption, that is instead necessary in order to let the IMCF start. 
We introduce convexity because it ensures that all the level sets of the potential we are considering are regular, and thus we can work out the smooth theory. Indeed, as already remarked, this is the content of a famous result of Lewis (\cite{Lew}, Theorem \ref{lewis} below). In order to put the work in perspective, a possible future development consists in studying weaker assumptions on $\Omega$ providing the desired regularity, in the spirit of Gerhardt and Urbas' works. Indeed, we stress the fact that our theory would work under any of these conditions on $\Omega$. In this direction, see the nice paper \cite{peralta} where the harmonic setting is considered. Another, and more ambitious possibility, would be establishing our new monotonicity also through singularities, in the wake of Huisken-Ilmanen's techniques. Relations between $p$-harmonic functions and weak solutions to the IMCF have already been considered in the remarkable paper \cite{moser}.
Let us finally point out that, in the harmonic theory proposed in \cite{AgoMaz2}, the well known upper bounds on the Hausdorff and Minkowski dimensions of the critical set of harmonic functions allowed the authors to establish monotonicity formulas for any bounded and smooth $\Omega$. For a proof of these regularity results in the much more general setting of linear, homogeneous, second-order elliptic equation, we refer the reader to \cite{cheeger}. On the contrary, in the non-linear setting, the structure of the critical set is still an open problem, at least to the authors' knowledge. 
\newline{}

The paper is organized as follows. In Section 2, after some preliminaries on problem \eqref{prob_ex} and our monotone quantities, we state and prove all the consequences of Theorem \ref{mainth} and Theorem \ref{mainthsup}.
These consequences will be  divided in local consequences (roughly speaking, expoiting $(V_q^p )'(1) \geq  0$) and global consequences (exploiting $\lim_{t \to 0^+} V_q^p(t) \leq V_q^p(1)$). Theorem \ref{geometric1} will follow by this second procedure. In Section 3 we introduce a conformally equivalent formulation of our problem. More precisely, we are going to the describe the \emph{cylindrical ansatz} in the non-linear setting, and to state the conformal version of our Monotonicity-Rigidity Theorems.
Section 4 is devoted to the proof of (the conformal version of) Theorem \ref{mainth}. Among the various steps, some  sharp inequalities for $p$-harmonic functions on Riemannian manifolds are proved, of interest in themselves.  Finally, in Section 5 we prove (the conformal version of) Theorem \ref{mainthsup}. 

\section{Preliminaries and consequences of the main results}
\subsection{Preliminaries} The classical theorem of Lewis, proved in \cite{Lew}, essentially provides all the background we need to apply our methods. We are going to state it essentially as reported in \cite{CNSXYZ}.
\begin{theorem}
\label{lewis}
Let $n>2$, $1<p<n$ and let $\Omega\subset\mathbb{R}^n$ be a bounded convex domain. Then there exists a unique weak solution $u$ to \eqref{prob_ex} satisfying the following
\begin{itemize}
	\item[(i)] $u\in C^{\infty}(\mathbb{R}^n\setminus \overline{\Omega})\cap C(\mathbb{R}^n\setminus \Omega)$;
	\item[(ii)] $0<u<1$ and $|D u|\neq 0$ in $\mathbb{R}\setminus\overline{\Omega}$;
	\item[(iii)] Let $C_p(\Omega)$ be the rescaled $p$-capacity of $\Omega$ defined by \eqref{capacity}.
Then 
\begin{equation}
\label{cpf1}
C_p(\Omega)=\frac{1}{\Big(\frac{n-p}{p-1}\Big)^{p-1}\abs{\Sf^{n-1}}}\int_{\mathbb{R}^n\setminus\overline{\Omega}} |D u|^p d\mu;
\end{equation}
	\item[(iv)] If $u$ is defined to be $1$ in $\Omega$, then 
	\[\Omega_t=\{x\in\mathbb{R}^n\ |\ u(x)>t\}\]
	is convex for each $t\in [0,1]$ and $\partial\Omega_t$ is a $C^{\infty}$ manifold for $0<t<1$.
\end{itemize}
\end{theorem}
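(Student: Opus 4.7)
The plan is to prove the four items by combining classical variational and regularity tools for the $p$-Laplacian with the specific contributions of Lewis, which ultimately rely on the convexity of $\Omega$. First, for existence and uniqueness of a weak solution to \eqref{prob_ex}, I would apply the direct method to the variational problem associated with \eqref{capacity}: taking a minimizing sequence in the natural Dirichlet class $\{v\in W^{1,p}_{\mathrm{loc}}(\R^n\setminus\Omega)\,:\, v=1 \text{ on } \partial\Omega,\ v\to 0 \text{ at }\infty\}$, reflexivity and weak lower semicontinuity of $v\mapsto\int|Dv|^p\,d\mu$ yield a minimizer. Strict convexity of the $p$-Dirichlet energy gives uniqueness in this class, and the comparison principle for $p$-harmonic functions (together with the decay condition at infinity) extends uniqueness to the full class of weak solutions of \eqref{prob_ex}. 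The identity \eqref{cpf1} of item (iii) then follows from testing the weak formulation against $u$ itself (or a smooth truncation), integrating by parts over a large ball of radius $R$, and using the $O(R^{-(n-p)/(p-1)})$ decay of $u$ and $O(R^{-(n-1)/(p-1)})$ decay of $|Du|$ inherited from the asymptotic profile \eqref{urot}.

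Second, interior $C^{1,\alpha}_{\mathrm{loc}}$ regularity of weak solutions to the degenerate equation is obtained by invoking the classical quasilinear theory: \cite{dibenedetto,lewis-reg} for $1<p<2$ and \cite{uhlenbeck,uraltseva,evans} for $p\geq 2$. This takes care of the $C^{1,\alpha}$ part of item (i). The strict inequality $0<u<1$ in $\R^n\setminus\overline\Omega$ in item (ii) follows from the strong maximum principle for the $p$-Laplace operator combined with the boundary datum and the vanishing at infinity.

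Third, and this is Lewis's central contribution, one must prove that $|Du|\neq 0$ everywhere outside $\overline{\Omega}$. The strategy is to work with a suitable auxiliary $P$-function built out of $|Du|$, $u$, and their derivatives (with exponents dictated by $n$ and $p$) and to apply the weak maximum principle to it on the annular region $\{\varepsilon<u<1-\varepsilon\}$. One shows that $P$ satisfies a linear elliptic inequality in the complement of the critical set, and that its behavior at $u=1$ and as $u\to 0$ is controlled via the convexity of $\Omega$ and the asymptotic expansion of $u$ at infinity; this forces $P$ to be bounded away from degeneracy, giving $|Du|>0$ globally. Once $|Du|$ does not vanish, the equation $\Delta_p u=0$ becomes locally uniformly elliptic with smooth coefficients in $Du$, and a standard Schauder bootstrap promotes $u$ to $C^\infty$, completing items (i) and (ii).

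Finally, for the convexity statement in (iv), I would employ the concavity maximum principle of Korevaar-Lewis type: consider the parabolic approximations $u_\varepsilon$ obtained by regularizing the degenerate equation, establish that $\log u_\varepsilon$ (or an appropriate concave transform) is concave by a two-point maximum principle using the convexity of $\Omega$ as boundary input, and pass to the limit $\varepsilon\to 0$ using the uniform $C^{1,\alpha}$ estimates from step two. Quasi-concavity of $u$ is equivalent to convexity of every $\Omega_t$, and the smoothness of $\partial\Omega_t$ for $0<t<1$ then follows from item (ii) by the implicit function theorem. The main obstacle is clearly the nonvanishing of the gradient in step three: the construction and analysis of the correct $P$-function is delicate, genuinely nonlinear, and the place where the convexity assumption on $\Omega$ is essential; without it, the critical set of $u$ could in principle be nontrivial, as the authors themselves emphasize in the discussion preceding the theorem.
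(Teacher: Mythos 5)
The paper does not prove Theorem \ref{lewis}: it is stated as a black box with an explicit citation to Lewis's original paper \cite{Lew} (and to \cite{CNSXYZ} for the formulation used here), so there is no internal proof to compare your proposal against.

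Assessed as an independent reconstruction, your outline has a genuine gap in the third step. Any $P$-function built from $|Du|$, $u$, and their derivatives satisfies a uniformly elliptic differential inequality only on the set $\{Du\neq 0\}$, because the linearization of the $p$-Laplacian degenerates precisely where the gradient vanishes. Applying a maximum principle to $P$ on $\{Du\neq 0\}\cap\{\varepsilon<u<1-\varepsilon\}$ therefore presupposes control of the critical set rather than ruling it out, and concluding that $|Du|>0$ globally from such an argument is circular. Moreover, for the natural choice of $P$-function the maximum principle yields an \emph{upper} bound on $|Du|/u^{(n-1)/(n-p)}$ (this is exactly what Section 5 of the paper exploits through Lemma \ref{elliptic2}), not the positive lower bound you need. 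The authors themselves stress that, unlike the harmonic case where the critical set has codimension two, the structure of the critical set of a $p$-harmonic function is an open problem for $p\neq 2$; this is precisely why they rely on Lewis's theorem rather than attempting a free-standing maximum-principle argument for $|Du|\neq 0$. In \cite{Lew}, the convexity of the level sets and the nonvanishing of the gradient are established together, working on convex rings, approximating by smooth strictly convex boundaries, and comparing with the radially symmetric model; the gradient bound is an output of the convexity analysis, not a prerequisite for it. You should therefore couple your steps three and four rather than treating nonvanishing of $Du$ as a preliminary. The remaining steps (direct method, $C^{1,\alpha}_{\mathrm{loc}}$ regularity from \cite{dibenedetto,lewis-reg,uhlenbeck,uraltseva,evans}, and the identity \eqref{cpf1} from testing against $u$) are sound, though the decay rates of $u$ and $|Du|$ for a general convex $\Omega$ should be obtained by comparison with explicit radial barriers of the form \eqref{urot} rather than by assuming they are ``inherited'' from the ball case.
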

We are going to use the following well known expression for $C_p(\Omega)$ in terms of an integral on $\partial \Omega$.
\begin{lemma}
\label{cplemma}
Let $u$ be the  solution to \eqref{prob_ex}. Then
\begin{equation}
\label{capf}
C_p(\Omega)=\dfrac{\bigintssss\limits_{\partial\Omega}\abs{Du}^{p-1}d\sigma}{\left(\frac{n-p}{p-1}\right)^{p-1}\abs{\Sf^{n-1}}}.
\end{equation}
\end{lemma}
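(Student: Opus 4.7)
The plan is to rewrite the volume integral in \eqref{cpf1} as the boundary integral over $\partial \Omega$ appearing in \eqref{capf}. This will be done via the coarea formula, combined with the observation that the ``$p$-flux'' $\int_{\{u=t\}}|Du|^{p-1}\,d\sigma$ is independent of $t$, by virtue of the $p$-harmonicity of $u$.

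Since Theorem \ref{lewis} guarantees $|Du|\neq 0$ throughout $\R^n\setminus\overline{\Omega}$, the coarea formula yields
\[
\int_{\R^n\setminus\overline{\Omega}} |Du|^p\,d\mu \;=\; \int_0^1 \Phi(t)\,dt, \qquad \Phi(t)\;:=\;\int_{\{u=t\}}|Du|^{p-1}\,d\sigma.
\]
To see that $\Phi$ is constant on $(0,1]$, I fix $0<t_1<t_2\leq 1$ and apply the divergence theorem on the annular region $\{t_1\leq u \leq t_2\}$. Thanks to $u\to 0$ at infinity this region is bounded, and by item (iv) of Theorem \ref{lewis} both its boundary components are smooth. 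The outward unit normal to the region is $Du/|Du|$ on $\{u=t_2\}$ and $-Du/|Du|$ on $\{u=t_1\}$, hence
\[
0 \;=\; \int_{\{t_1\leq u\leq t_2\}} \Delta_p u\,d\mu \;=\; \int_{\{u=t_2\}}|Du|^{p-1}\,d\sigma \;-\; \int_{\{u=t_1\}}|Du|^{p-1}\,d\sigma \;=\; \Phi(t_2)-\Phi(t_1).
\]

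It follows that $\Phi \equiv \Phi(1)=\int_{\partial\Omega}|Du|^{p-1}\,d\sigma$, so the coarea identity reduces to $\int_{\R^n\setminus\overline{\Omega}}|Du|^p\,d\mu = \int_{\partial\Omega}|Du|^{p-1}\,d\sigma$; substituting into \eqref{cpf1} yields \eqref{capf}. There is no real obstacle in this argument: the regularity of $u$ and of its level sets, the non-vanishing of $|Du|$, and the boundedness of $\{u\geq t\}$ for every $t>0$ are already built into Theorem \ref{lewis} and into the decay of $u$ at infinity, so that neither an asymptotic boundary term nor a delicate regularization step is needed.
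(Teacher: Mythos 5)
Your proof is correct and follows essentially the same route as the paper's: both arguments combine the coarea formula for $\int_{\R^n\setminus\overline\Omega}|Du|^p\,d\mu$ with the divergence theorem applied to $\Delta_p u=0$ on an annular region $\{t<u<1\}$ (you use $\{t_1\leq u\leq t_2\}$) to show that the $p$-flux $\int_{\{u=t\}}|Du|^{p-1}\,d\sigma$ is independent of $t$. The only difference is the order of presentation, which is immaterial.
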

\begin{proof}
By exploiting the $p$-harmonicity of $u$ and the Divergence theorem we have
\[
0=\int_{\{t<u<1\}}\Delta_p u\ dx= \int_{\{u=t\}}\abs{Du}^{p-1}d\sigma - \int_{\partial \Omega}\abs{Du}^{p-1}d\sigma
\]
which implies 
\[
\int_{\{u=t\}}\abs{Du}^{p-1}d\sigma=\int_{\partial \Omega}\abs{Du}^{p-1}d\sigma.
\]
Thus, by co-area formula and \eqref{cpf1}, we have
\[
\left(\frac{n-p}{p-1}\right)^{p-1}\abs{\Sf^{n-1}}C_p(\Omega)= \int_{0}^1\int_{\{u=t\}}\abs{Du}^{p-1}d\sigma=\int_{\partial \Omega}\abs{Du}^{p-1}d\sigma.
\]
\end{proof}

\begin{remark}[$u$ is analytic]
\label{elliptic}
Condition \emph{(b)} in the above theorem actually implies analyticity of the solution $u$, by an application of local regularity theory developed in \cite{lady}. The same observation was crucial also in the proof of \cite[Theorem 2.4]{garofalosartori}.
\end{remark}

The following asymptotics for $u$  will be important later on to compute the limits of our monotone quantities.
\begin{proposition}[\cite{CNSXYZ}, Lemma 2.15]
\label{mono_and}
Let $n>2$. Suppose $1<p<n$ and let $\Omega\subset \mathbb{R}^n$ be a bounded, smooth convex domain. 
If $u$ is a solution to \eqref{prob_ex}, then 
\begin{itemize}
	\item[(i)] $\lim_{|x|\to\infty} u(x)|x|^{\frac{n-p}{p-1}}=C_p(\Omega)^{\frac{1}{p-1}}$.
	\item[(ii)] $\lim_{|x|\to\infty} |D u(x)||x|^{\frac{n-1}{p-1}}=C_p(\Omega)^{\frac{1}{p-1}}\big(\frac{n-p}{p-1}\big)$.
\end{itemize}
\end{proposition}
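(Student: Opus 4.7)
The plan is to follow the classical recipe used in existence theory for $p$-capacitary potentials: first establish two-sided radial bounds on $u$ by comparison against explicit radial barriers, then upgrade these to convergence via a blow-down / Liouville argument, and finally pin down the constant through the flux identity that already underlies Lemma \ref{cplemma}.

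First I would fix $R_1 > 0$ so large that $\overline{\Omega} \subset B_{R_1}$, and set $\alpha_1 = \min_{\partial B_{R_1}} u$, $\alpha_2 = \max_{\partial B_{R_1}} u$, both strictly positive by Theorem \ref{lewis}. The radial functions $w_i(x) = \alpha_i (R_1/|x|)^{(n-p)/(p-1)}$ are $p$-harmonic on $\R^n \setminus \overline{B_{R_1}}$, vanish at infinity, and coincide with $\alpha_i$ on $\partial B_{R_1}$. The comparison principle for the $p$-Laplacian then yields
\begin{equation*}
\alpha_1 \left(\tfrac{R_1}{|x|}\right)^{\!\frac{n-p}{p-1}} \leq u(x) \leq \alpha_2 \left(\tfrac{R_1}{|x|}\right)^{\!\frac{n-p}{p-1}} \qquad \text{for every } |x| \geq R_1,
\end{equation*}
so that the rescaled function $u(x)|x|^{(n-p)/(p-1)}$ is bounded away from zero and infinity. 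To promote these bounds to an actual limit, I would consider the blow-down family $u_R(y) = R^{(n-p)/(p-1)} u(Ry)$, defined for $|y| \geq R_1/R$. Each $u_R$ is $p$-harmonic in its domain, uniformly bounded above and below by the barriers of the previous step, and satisfies uniform $C^{1,\alpha}_{\text{loc}}$ estimates on compact subsets of $\R^n \setminus \{0\}$ by Tolksdorf-type regularity. Extracting a subsequential limit $u_\infty$ and using the Liouville-type classification of positive $p$-harmonic functions on $\R^n \setminus \{0\}$ with decay of order $|y|^{-(n-p)/(p-1)}$, one concludes that every such limit has the form $u_\infty(y) = L |y|^{-(n-p)/(p-1)}$ for some constant $L$, independent of the subsequence. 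This proves that $u(x)|x|^{(n-p)/(p-1)} \to L$.

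To identify $L$ I would invoke the flux identity already used in the proof of Lemma \ref{cplemma}: applying the divergence theorem on $\{t<u<1\}$ gives, for every $t \in (0,1]$,
\begin{equation*}
\int_{\{u=t\}} |Du|^{p-1}\,d\sigma \;=\; \left(\tfrac{n-p}{p-1}\right)^{\!p-1} |\Sf^{n-1}|\, C_p(\Omega).
\end{equation*}
Evaluating this identity on the spheres $\{|x| = R\}$ and passing to the limit $R \to \infty$ is now straightforward because the $C^{1,\alpha}_{\text{loc}}$ convergence of $u_R$ to $L|y|^{-(n-p)/(p-1)}$ gives $|Du(x)| = L \tfrac{n-p}{p-1} |x|^{-(n-1)/(p-1)}(1+o(1))$ uniformly on spheres, so the left hand side converges to $L^{p-1} (\tfrac{n-p}{p-1})^{p-1} |\Sf^{n-1}|$. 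Comparing yields $L^{p-1} = C_p(\Omega)$, i.e.\ $L = C_p(\Omega)^{1/(p-1)}$, which is exactly claim (i), and assertion (ii) is precisely the gradient information that the blow-down convergence provides.

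The main obstacle is the convergence step: unlike the linear case $p=2$, we cannot simply subtract off a fundamental solution and apply removable singularity theorems. The nonlinear Liouville / classification statement for homogeneous positive $p$-harmonic functions on $\R^n \setminus \{0\}$ is the non-trivial ingredient, and it is ultimately what forces the barriers $w_1$, $w_2$ to have the same limit coefficient. All remaining steps reduce to standard nonlinear potential theory together with the flux identity made available by Lemma \ref{cplemma}.
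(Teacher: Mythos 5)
The paper itself does not prove this proposition; it is quoted verbatim from \cite{CNSXYZ}, Lemma~2.15, so there is no internal proof to compare against. Your blind reconstruction --- radial barriers via the comparison principle, a blow-down limit with uniform $C^{1,\alpha}_{\mathrm{loc}}$ estimates, a Liouville-type classification of the limit, and identification of the coefficient through the flux --- is a standard and sound route to the statement, and likely close in spirit to what \cite{CNSXYZ} does.

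Two points deserve tightening. First, the classification of positive $p$-harmonic functions $v$ on $\R^n\setminus\{0\}$ squeezed between $c_1|y|^{-\frac{n-p}{p-1}}$ and $c_2|y|^{-\frac{n-p}{p-1}}$ (concluding that $v$ is an exact multiple of the fundamental solution) is precisely where all the nonlinear content lives, and your proof invokes it without a citation or a sketch. This is not automatic as it would be for $p=2$; the relevant reference is the singularity theory of Kichenassamy--Véron, which in fact appears in the paper's bibliography as \cite{KV} although it is never invoked in the text. Without naming that result (or proving it), the blow-down argument is incomplete: the extraction gives only that subsequential limits are positive $p$-harmonic functions trapped between the two barriers, which by itself does not force radial symmetry. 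Second, you write the flux identity as $\int_{\{u=t\}}|Du|^{p-1}\,d\sigma$ and then propose to ``evaluate this identity on the spheres $\{|x|=R\}$'', but spheres are not level sets of $u$, so that literal step is undefined. What you want is the constancy, on every smooth hypersurface $\Sigma$ enclosing $\overline{\Omega}$, of $\int_\Sigma |Du|^{p-2}\langle Du,\nu\rangle\,d\sigma$ (by the divergence theorem applied to $\dive(|Du|^{p-2}Du)=0$); this reduces to the quantity in Lemma~\ref{cplemma} when $\Sigma$ is a level set, and passing $\Sigma=\partial B_R$ to infinity using the $C^1$ convergence of the blow-down identifies $L^{p-1}=C_p(\Omega)$. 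Once this flux computation pins down $L$ uniquely, the independence of $L$ from the subsequence follows, which in turn upgrades the subsequential limit to a genuine limit --- note that the order of these deductions is the opposite of what your write-up suggests (you assert subsequence-independence before establishing it). With these two repairs the argument is complete.
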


We recall here for the reader's convenience the definition of $V_q^p: (0, 1] \mapsto \R$ given in the Introduction:
\begin{equation*}
\label{defvqpSSSSSSS}
V_q^p(t)=
\begin{system}
   &\left(\dfrac{{C_p(\Omega)}}{t^{p-1}}\right)^{\frac{(n-1)(q-1)}{n-p}}\!\!\!\!\!\!\!\!\!\int_{\{u=t\}}\abs{Du}^{q(p-1)} \,d\sigma& \qquad \qquad &\mbox{if}&\qquad 0\leq q <\infty\\
	 & \sup\limits_{{\{u=t}\}} \dfrac{\abs{Du}}{u^{\frac{n-1}{n-p}}} & \qquad &\mbox{if}&\qquad q=\infty.
\end{system}
\end{equation*}

\begin{remark}
\label{finite}
We point out that $V_q^p(t)<\infty$ for any $t \in (0, 1]$ and for any $0\leq q\leq \infty$. Indeed, $\abs{Du}$ is a continuous function and by Theorem \ref{lewis} \emph{(iv)} and the asymptotics given in Proposition \ref{mono_and} \emph{(i)}, $\{u=t\}$ is a smooth compact set. Moreover, $\abs{Du} > 0$ by (ii) of Theorem \ref{lewis}, and then  also the integral in \eqref{vqpder} is finite for any $(p, q) \in \Lambda$.
\end{remark}

By the asymptotics given in Proposition \ref{mono_and}, it is just a matter of straightforward computation obtaining the limits of our monotone quantities. 

\begin{lemma}[Limits of $V_q^p$]
\label{limitlemma}
Let $V_q^p : (0, 1] \to \R$ be defined as in \eqref{defvqp}. Then
\begin{equation}
\label{limvqp1}
\lim_{t \to 0^+}V_q^p(t)={C_p}(\Omega)^{{q}}\left(\frac{n-p}{p-1}\right)^{q(p-1)}\abs{\Sf^{n-1}}, \qquad \text{if} \qquad q <\infty
\end{equation}
and
\begin{equation}
\label{limvqinf}
\lim_{t \to 0^+} V_\infty^p(t)=\left(\frac{n-p}{p-1}\right) C_p(\Omega)^{-\frac{1}{n-p}} ,
\end{equation}
\end{lemma}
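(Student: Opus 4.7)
The plan is to promote the pointwise asymptotics of Proposition~\ref{mono_and} to \emph{uniform} asymptotics along the level sets $\{u=t\}$ as $t\to 0^+$. The key observation is that $\min_{x\in\{u=t\}}\abs{x}\to\infty$ as $t\to 0^+$: indeed, $u$ is continuous and strictly positive on $\R^n\setminus\Omega$ and vanishes at infinity, so for every $R$ with $\overline{\Omega}\subset B_R$ one has $\tau_R:=\min_{\overline{B_R}\setminus\Omega}u>0$, which forces $\{u=t\}\subset\R^n\setminus B_R$ as soon as $t<\tau_R$. Combining this escape-to-infinity property with Proposition~\ref{mono_and} yields the uniform estimates
\begin{equation}
\abs{x}=\Big(\frac{C_p(\Omega)}{t^{p-1}}\Big)^{\!\frac{1}{n-p}}(1+o(1)),\qquad \abs{Du(x)}=\frac{n-p}{p-1}\,C_p(\Omega)^{-\frac{1}{n-p}}\,t^{\frac{n-1}{n-p}}(1+o(1)),
\end{equation}
valid for all $x\in\{u=t\}$ as $t\to 0^+$.

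The case $q=\infty$ is then immediate: on $\{u=t\}$ one has $u\equiv t$, so $\abs{Du}/u^{(n-1)/(n-p)}$ converges uniformly to the constant $\frac{n-p}{p-1}\,C_p(\Omega)^{-1/(n-p)}$; the supremum therefore has the same limit, which is precisely \eqref{limvqinf}.

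For $1\leq q<\infty$, I would avoid estimating the surface measure of $\{u=t\}$ directly, and instead exploit a conserved flux. Splitting the integrand as
\begin{equation}
\int_{\{u=t\}}\abs{Du}^{q(p-1)}\,d\sigma=\int_{\{u=t\}}\abs{Du}^{(q-1)(p-1)}\cdot\abs{Du}^{p-1}\,d\sigma,
\end{equation}
the first factor is, by the uniform asymptotic above, a function of $t$ alone up to a $(1+o(1))$ multiplicative error, and may therefore be pulled out of the integral. The remaining flux integral equals $C_p(\Omega)\bigl(\tfrac{n-p}{p-1}\bigr)^{p-1}\abs{\Sf^{n-1}}$ and is independent of $t$: this is exactly the content of the divergence-theorem computation performed in Lemma~\ref{cplemma}. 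Multiplying by the outer prefactor $(C_p(\Omega)/t^{p-1})^{(n-1)(q-1)/(n-p)}$ and collecting exponents, the powers of $t$ cancel identically while the powers of $C_p(\Omega)$ combine to $q$, giving \eqref{limvqp1}.

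The only nontrivial ingredient is the uniform validity of the asymptotics along $\{u=t\}$, which rests on the escape-to-infinity property recorded above; the remainder of the argument is elementary algebra together with the flux conservation already proved in Lemma~\ref{cplemma}.
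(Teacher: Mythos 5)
Your proof is correct and fleshes out exactly the computation the authors describe as ``just a matter of straightforward computation'' from Proposition~\ref{mono_and}: the paper does not actually spell out the details, so yours is the proof. The two ingredients you use — promoting the pointwise asymptotics to uniform asymptotics on level sets via the escape-to-infinity property, and factoring $\abs{Du}^{q(p-1)}=\abs{Du}^{(q-1)(p-1)}\cdot\abs{Du}^{p-1}$ so that the conserved flux of Lemma~\ref{cplemma} eliminates any need to estimate $\sigma(\{u=t\})$ directly — are both sound, and the exponent bookkeeping at the end does yield precisely \eqref{limvqp1} and \eqref{limvqinf}. One small remark: you restrict the finite case to $q\geq 1$, but since the uniform asymptotics bound $\abs{Du}$ both above and away from zero on $\{u=t\}$ for small $t$, the same factorization handles negative powers $(q-1)(p-1)<0$ as well, so the argument in fact covers the full range $0\leq q<\infty$ in which the lemma is stated.
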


\subsection{Consequences of the main Theorems} 

In this section we mainly follow the scheme proposed in \cite{AgoMaz2} to get various consequences of Theorem \ref{mainth} and Theorem \ref{mainthsup}. More precisely, in the first subsection we use \eqref{vqpder} and \eqref{supvder}, to deduce various sharp inequalities involving $u$ and $\Omega$, while in the second subsection we will compare the value of our monotone functions on different level sets of $u$. The most interesting byproducts of this theory will arise by this second procedure. 
In other words, we are going to exploit both the local and the global features of our theorems. 

Let us also point out that the monotonicity of $V_q^p$ allows us to extend to the non-linear case all the results provided in \cite{AgoMaz2}, while the monotonicity of $V_\infty^p$ extends the results contained in \cite{borghini}.  In particular, we provide a unified approach to the type of estimates considered in these two papers. Indeed, both the monotonicity of $V_q^p$ and and $V_\infty^p$ will be proved in the conformal setting provided by the \emph{cylindrical ansatz}, introduced in \cite{AgoMaz2}, while in \cite{borghini} a different geometric approach named by the authors \emph{spherical ansatz} was considered.
Before proceeding notice that $\partial \Omega = \{u=1\}$, and consequently, if $\nu$ is the (interior) normal to $\partial\Omega$, we get $\partial u/ \partial \nu = -\abs{Du}$.

\subsubsection{Local consequences}

We first notice that a direct consequence of \eqref{vqpder} or \eqref{supvder} is an overdetermining Neumann condition for the exterior problem forcing the solution to be rotationally symmetric. 
\begin{corollary}
\label{over}
Let $u$ be a solution to problem \eqref{prob_ex} and assume that the identity 
\[
\frac{p-1}{n-p}\abs{Du}=\frac{H}{n-1}
\]
holds $\Haus{n-1}$-almost everywhere on $\partial\Omega$, where $H$ is the mean curvature of $\partial\Omega$. Then $u$ is rotationally symmetric. In particular, if $u$ solves the overdeterminated boundary value problem
\begin{align}
\left\{\begin{array}{lll}\label{profc}
\Delta_p u=0 & \mbox{in} & \R^n\setminus \overline{\Omega}\\
\quad \,\,u=1 & {\text{on}} & \partial\Omega\\
\quad\!\frac{\partial u}{\partial\nu}=-\Big(\frac{n-p}{(p-1)(n-1)}\Big)H & \mbox{on} & \partial\Omega \\
\,u(x)\to 0 & \mbox{as} & |x|\to\infty\\
\end{array}\right.
\end{align}
where $\nu$ is the unit normal vector to $\partial\Omega$ pointing toward the interior of $\R^n\setminus\overline{\Omega}$, then $\Omega$ is a ball and $u$ is rotationally symmetric.
\end{corollary}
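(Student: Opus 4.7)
The plan is to reduce the overdetermined Neumann problem in the second part of the statement to the pointwise identity hypothesised in the first part, and then to invoke the rigidity clause of Theorem \ref{mainthsup} (equivalently, of Theorem \ref{mainth}) to conclude.

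For the reduction I would first observe that on $\partial\Omega=\{u=1\}$ the unit normal $\nu$ pointing into $\R^n\setminus\overline{\Omega}$ is precisely $-Du/|Du|$, because $u$ attains its maximum on $\partial\Omega$ and decays to zero at infinity; hence $\partial u/\partial\nu=-|Du|$ there. Substituting this into the Neumann condition of \eqref{profc} gives $|Du|=(n-p)H/\big((p-1)(n-1)\big)$ on $\partial\Omega$, which is exactly the pointwise identity of the first part. It is therefore sufficient to prove the first statement.

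Assume then that the hypothesis of the first part holds $\Haus{n-1}$-almost everywhere on $\partial\Omega$. Since $\partial\Omega$ is smooth (by convexity) and $u$ is smooth up to $\partial\Omega$ by Theorem \ref{lewis}, both $|Du|$ and $H$ are continuous on $\partial\Omega$, so the almost-everywhere identity actually holds pointwise. Using $u\equiv 1$ on $\partial\Omega$, which forces $|D\log u|=|Du|$ there, the identity rewrites as $H-\frac{(n-1)(p-1)}{n-p}|D\log u|=0$ everywhere on $\partial\Omega$. Next, pick a maximum point $x_1\in\partial\Omega$ of the function $|Du|/u^{(n-1)/(n-p)}=|Du|$ on the level set $\{u=1\}=\partial\Omega$ (compactness of $\partial\Omega$ secures its existence). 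The previous identity shows that the inequality \eqref{supvder} of Theorem \ref{mainthsup} is an equality at $x_1$ with $t=1$, so the rigidity clause of Theorem \ref{mainthsup}(ii) immediately forces $\Omega$ to be a ball and $u$ to be rotationally symmetric.

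I do not anticipate any substantive obstacle, since the corollary is essentially a direct translation of the rigidity contained in the main theorems into an overdetermined-problem language. The only delicate points are the promotion of the a.e.\ identity to a pointwise one (via the smoothness provided by Theorem \ref{lewis}) and the identification $\partial u/\partial\nu=-|Du|$ on $\partial\Omega$. An entirely parallel argument using Theorem \ref{mainth} instead is available: choose any $(p,q)\in\Lambda$ with $q>1$ and insert the hypothesis into \eqref{vqpder} at $t=1$ to conclude that $(V_q^p)'(1)=0$, whence the rigidity clause of Theorem \ref{mainth} yields the same conclusion.
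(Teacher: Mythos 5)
Your proof is correct and takes essentially the same route the paper intends: the paper presents Corollary \ref{over} with no written proof, calling it a direct consequence of \eqref{vqpder} or \eqref{supvder}, and your argument fleshes out precisely that—translating the Neumann condition into the pointwise identity via $\partial u/\partial\nu=-\abs{Du}$ on $\partial\Omega$, promoting the a.e.\ identity to a pointwise one by continuity, and then applying the rigidity clause of Theorem \ref{mainthsup} (or equivalently Theorem \ref{mainth}) at $t=1$.
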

We are now going to improve Corollary \ref{over} in several different ways.
The following result involves just Lebesgue norms of the normal derivative of $u$ on $\partial \Omega$ .

\begin{theorem}
\label{corder}
Let $(p, q)\in\Lambda$ and $u$ be a solution to \eqref{prob_ex}. Then it holds
\begin{equation}
\label{lpder}
\|Du\|_ {L^{(p-1)q}(\partial\Omega)}= \bignorm{\frac{\partial u}{\partial\nu}}_{L^{(p-1)q}(\partial\Omega)}\leq\left(\frac{n-p}{(p-1)(n-1)}\right)\norm{H}_{L^{(p-1)q}(\partial\Omega)},
\end{equation}
and
\begin{equation}
\label{linftyder}
\sup_{\partial \Omega} |Du|=\sup_{\partial \Omega}\bigabs{\frac{\partial u}{\partial \nu}} \leq \sup_{\partial \Omega}\left(\frac{n-p}{(p-1)(n-1)}\right)H
\end{equation}
Moreover, equality holds in \eqref{lpder} or in \eqref{linftyder} if only if $\Omega$ is a ball and $u$ is rotationally symmetric. 
\end{theorem}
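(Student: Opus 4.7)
The plan is to combine Theorem \ref{mainth} and Theorem \ref{mainthsup} with H\"older's inequality, applied at the specific level $t = 1$ (i.e.\ on $\partial\Omega$), noting that $u \equiv 1$ there gives $\abs{D\log u} = \abs{Du}$ and that $\abs{Du} = \abs{\partial u/\partial\nu}$ because $u$ is constant on $\partial\Omega$.

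For the $L^{q(p-1)}$ estimate \eqref{lpder}, I will start from the non-negativity of the derivative formula \eqref{vqpder} evaluated at $t=1$. Since $(p,q) \in \Lambda$ forces $q > 1$ and $C_p(\Omega) > 0$, this reduces to
\[
\int_{\partial\Omega}\abs{Du}^{q(p-1)-1}H\,d\sigma \;\geq\; \frac{(n-1)(p-1)}{n-p}\int_{\partial\Omega}\abs{Du}^{q(p-1)}\,d\sigma.
\]
Next I will apply H\"older's inequality with conjugate exponents $q(p-1)/(q(p-1)-1)$ and $q(p-1)$ (valid since the lower bound of $\Lambda$ guarantees $q(p-1) \geq 1$, with strict inequality outside the trivial endpoint) to bound the left-hand side by $\norm{Du}_{L^{q(p-1)}(\partial\Omega)}^{q(p-1)-1}\norm{H}_{L^{q(p-1)}(\partial\Omega)}$. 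Dividing through by the common factor yields \eqref{lpder}.

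For the $L^\infty$ estimate \eqref{linftyder}, I will pick $x_1 \in \partial\Omega$ where $\abs{Du}$ attains its maximum on $\partial\Omega$; since $u \equiv 1$ on $\partial\Omega$, this $x_1$ is also a maximum point of $\abs{Du}/u^{(n-1)/(n-p)}$ on $\{u=1\}$. Applying Theorem \ref{mainthsup}(ii) with $t=1$ gives
\[
H(x_1) \;\geq\; \frac{(n-1)(p-1)}{n-p}\abs{Du}(x_1) \;=\; \frac{(n-1)(p-1)}{n-p}\sup_{\partial\Omega}\abs{Du},
\]
and bounding $H(x_1) \leq \sup_{\partial\Omega} H$ yields \eqref{linftyder}.

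The rigidity statements follow by tracing equality cases backward. If equality holds in \eqref{lpder}, then the integral inequality coming from the monotonicity must itself be an equality, i.e.\ $(V_q^p)'(1) = 0$; by the rigidity part of Theorem \ref{mainth} this forces $\Omega$ to be a ball and $u$ to be rotationally symmetric. Similarly, equality in \eqref{linftyder} forces $H(x_1) = \frac{(n-1)(p-1)}{n-p}\abs{D\log u}(x_1)$ at the maximum point $x_1$ on $\{u=1\}$, which triggers the rigidity of Theorem \ref{mainthsup}(ii). No step here looks like a genuine obstacle: the only delicate point is ensuring $q(p-1) > 1$ so that H\"older is non-trivial, but this follows immediately from the definition of $\Lambda$ (one checks $q(p-1) \geq 1$ with equality only in a degenerate endpoint which can be treated by continuity or excluded directly).
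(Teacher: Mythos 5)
Your proof is correct and follows essentially the same route as the paper: evaluate the derivative formula \eqref{vqpder} (resp.\ \eqref{supvder}) at $t=1$, use $\abs{D\log u}=\abs{Du}$ on $\partial\Omega$, apply H\"older's inequality with the conjugate exponents $q(p-1)$ and $q(p-1)/(q(p-1)-1)$, and trace rigidity back to the rigidity statements of Theorems \ref{mainth} and \ref{mainthsup}. The only addition you make beyond the paper's terse argument is the explicit check that $q(p-1)>1$ on $\Lambda$ (which is in fact always strict, since $q(p-1)-1\geq (p-1)(n-2)/(n-1)>0$), so there is no degenerate endpoint to exclude.
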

\begin{proof}
Plugging $t=1$ into \eqref{vqpder} and recalling that $|D\log u|=|Du|$ in $\{u=1\}$ we get
\begin{equation}
\label{cons1f}
\int_{\partial \Omega}\frac{p-1}{n-p}{\abs{D\log u}}^{(p-1)q}d\sigma\leq\int_{\partial \Omega}\abs{D\log u}^{(p-1)q-1}\frac{H}{n-1}d\sigma.
\end{equation}
Applying H\"older inequality to \eqref{cons1f} gives \eqref{lpder}. The rigidity part of the statement follows from the related part in Theorem \ref{mainth}.

Inequality \eqref{linftyder} together with the rigidity statement follows applying \eqref{supvder} with $t=1$ and recalling that $x_1\in \{u=1\}$ is the maximum point of $|Du|$ on $\{u=1\}$.
\end{proof}


Using \eqref{lpder} and \eqref{linftyder} we can then easily prove the following geometric estimates for $C_p(\Omega)$.
\begin{theorem}
\label{cpth}
Let $\Omega\subset\R^{n}$ be a bounded convex domain with smooth boundary. Let $(p, q)\in\Lambda$. Then
\begin{equation}
\label{cpthf}
C_p(\Omega)\leq\frac{\abs{\partial\Omega}}{\abs{\Sf^{n-1}}}\left({\fint_{\partial\Omega}\left(\frac{H}{n-1}\right)}^{{(p-1)}q}d\sigma\right)^{\frac{1}{q}}
\end{equation}
and
\begin{equation}
\label{cpthinfty}
C_p(\Omega)\leq \frac{\abs{\partial \Omega}}{\abs{\Sf^{n-1}}}\sup_{\partial \Omega}\left({\frac{H}{n-1}}\right)^{p-1}.
\end{equation}
Moreover, equality is achieved in \eqref{cpthf} or in \eqref{cpthinfty}  if and only if $\Omega$ is a ball.
\end{theorem}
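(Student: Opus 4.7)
The plan is to combine Lemma~\ref{cplemma}, which represents
\[
C_p(\Omega) \;=\; \frac{1}{\left(\tfrac{n-p}{p-1}\right)^{p-1}|\mathbb{S}^{n-1}|}\int_{\partial\Omega}|Du|^{p-1}\,d\sigma,
\]
with the two boundary estimates on $|Du|$ already established in Theorem~\ref{corder}. No new information on $u$ is needed: both \eqref{cpthf} and \eqref{cpthinfty} will follow by integrating the bounds \eqref{lpder} and \eqref{linftyder} against the capacity representation formula, and the whole proof reduces to elementary manipulations.

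For \eqref{cpthf}, I note that $(p,q)\in\Lambda$ forces $q\geq 1+\tfrac{n-p}{(p-1)(n-1)}>1$, so H\"older's inequality with exponents $q$ and $q/(q-1)$ yields
\[
\int_{\partial\Omega}|Du|^{p-1}\,d\sigma \;\leq\; |\partial\Omega|^{\,1-1/q}\,\|Du\|_{L^{(p-1)q}(\partial\Omega)}^{\,p-1}.
\]
Inserting the bound \eqref{lpder} of Theorem~\ref{corder} replaces $\|Du\|_{L^{(p-1)q}(\partial\Omega)}$ by $\tfrac{n-p}{(p-1)(n-1)}\|H\|_{L^{(p-1)q}(\partial\Omega)}$, and the prefactor $\bigl(\tfrac{n-p}{(p-1)(n-1)}\bigr)^{p-1}$ combines with the $\bigl(\tfrac{n-p}{p-1}\bigr)^{-(p-1)}$ coming from Lemma~\ref{cplemma} to produce the single constant $(n-1)^{-(p-1)}$, which is precisely what is needed to form the averaged quantity on the right-hand side of \eqref{cpthf}.

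For \eqref{cpthinfty} I would use the trivial estimate
\[
\int_{\partial\Omega}|Du|^{p-1}\,d\sigma \;\leq\; |\partial\Omega|\left(\sup_{\partial\Omega}|Du|\right)^{\!p-1},
\]
and then apply \eqref{linftyder} to replace $\sup_{\partial\Omega}|Du|$ by $\tfrac{n-p}{(p-1)(n-1)}\sup_{\partial\Omega}H$. The same cancellation of constants produces exactly the right-hand side of \eqref{cpthinfty}.

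Regarding rigidity: equality in \eqref{cpthf} forces in particular equality in the step coming from \eqref{lpder}, and equality in \eqref{cpthinfty} forces equality in the step coming from \eqref{linftyder}; in either case, the rigidity part of Theorem~\ref{corder} implies that $\Omega$ is a ball. Conversely, when $\Omega$ is a ball, $|Du|$ and $H$ are both constant on $\partial\Omega$, and the inequalities reduce to equalities by direct substitution. I do not expect a real obstacle here; the only delicate point is the bookkeeping of the powers of $(n-p)/(p-1)$ and $(n-1)$ so that the normalizing constants in the definition of $C_p(\Omega)$, in Lemma~\ref{cplemma}, and in Theorem~\ref{corder} combine cleanly into the averaged mean curvature on the right-hand side.
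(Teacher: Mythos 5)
Your proposal matches the paper's own proof: the same application of H\"older's inequality with exponents $q$ and $q/(q-1)$, the same trivial sup estimate, combined with the capacity representation \eqref{capf} and the boundary estimates \eqref{lpder}, \eqref{linftyder}, with rigidity lifted from Theorem~\ref{corder}. The bookkeeping of the constants works out exactly as you describe, so there is nothing further to add.
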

\begin{proof}
By H\"older inequality with conjugate exponents $q$ and $q/(q-1)$ we obtain
\[
{\left(\int_{\partial\Omega}\abs{Du}^{p-1}d\sigma\right)}^{\frac{1}{p-1}}\leq\norm{Du}_{L^{(p-1)q}(\partial\Omega)}\abs{\partial\Omega}^{\frac{q-1}{q(p-1)}}
\]
and \eqref{cpthf} follow by \eqref{lpder} and  \eqref{capf}. Inequality \eqref{cpthinfty} can be easily proved observing that
\[
\int_{\partial \Omega} \abs{Du}^{p-1}\ d\sigma \leq \sup_{\partial \Omega}\abs{Du}^{p-1} \abs{\partial \Omega},
\]
and applying \eqref{linftyder} and \eqref{capf}. The rigidity follows from the rigidity part of Corollary \ref{corder}.
\end{proof}

\begin{remark}
\label{indip}
Since for every $p \in (1, n)$  the couple $(p, (n-1)/(p-1))$ belongs to $\Lambda$, inequality \eqref{cpthf} yields the following $p$-independent estimate for the $p$-capacity
\[
C_p(\Omega)^{\frac{1}{p-1}} \leq \frac{\abs{\partial\Omega}}{\abs{\Sf^{n-1}}}\left({\fint_{\partial\Omega}\left(\frac{H}{n-1}\right)}^{n-1}d\sigma\right)^{\frac{1}{n-1}}.
\] 
together with a rigidity statement when equality is attained. 
The integral appearing in the right hand side of the above inequality is known in literature as Willmore functional. 
We point out that the previous inequality has been proved in \cite{xiao} under weaker assumptions on $\Omega$ and by different methods.
\end{remark} 
 
\begin{remark}[A classical overdetermined problem] Consider the problem of characterizing bounded domains $\Omega \subset \R^n$ supporting a solution to the classical overdetermined exterior problem  for the $p$-Laplace operator:
\begin{align}\label{serrin_pb}
\left\{\begin{array}{lll}
\Delta_p u=0 & \mbox{in} & \R^n\setminus \overline{\Omega}\\
\quad\,\, u=1 & \mbox{on} & \partial\Omega\\
\quad\!\frac{\partial u}{\partial \nu}= -c  & \mbox{on} & \partial \Omega \\
\, u(x)\to 0 & \mbox{{as}} & |x|\to\infty,
\end{array}\right.
\end{align}
where $c$ is a positive constant.
Then, in this setting, Theorem \ref{mainthsup} can be easily combined with the techniques used in \cite{garofalosartori} to obtain the rotational symmetry of $\Omega$ for bounded and convex domains.  In this regard, observe that the monotonicity of $V_\infty^q$ readily implies a maximum principle
stating that, for any $t \in (0, 1]$
\[
\sup_{\{u\leq t\}} \frac{\abs{Du}}{u^{\frac{n-1}{n-p}}}=\sup_{\{u = t\}} \frac{\abs{Du}}{u^{\frac{n-1}{n-p}}}.
\] 
This type of estimate was actually the key ingredient in Garofalo-Sartori's arguments.
Let us finally also point out that symmetry for this type of overdetermined problems has been established in much more generality by Reichel in \cite{reichel}.
\end{remark}
\subsubsection{Global consequences}

We turn our attention to the global features of our monotonicity theorems. 
We consider separately $V_q^p$ and $V_\infty^p$.
\newline{}

Let first $q < \infty$. Since $V_q^p$ is non-decreasing we have
\begin{equation}
\label{global}
\lim_{t\to 0^+}V_q^p(t)\leq V_q^p(1).
\end{equation}
By Lemma \ref{limitlemma} 
\begin{equation}
\label{limvqp}
\lim_{t\to 0^+}V_q^p(t)={C_p}(\Omega)^{{q}}\left(\frac{n-p}{p-1}\right)^{q(p-1)}\abs{\Sf^{n-1}},
\end{equation}
Inserting the above expression into \eqref{global}, some elementary algebra and \eqref{lpder} give the following inequalities
\[
\small
\abs{\Sf^{n-1}}^{\frac{1}{q(p-1)}}\left({{C_p}(\Omega)^{\frac{1}{p-1}}}\right)^{1-\frac{(n-1)(q-1)}{q(n-p)}}\!\!\!\!\leq\bignorm{\frac{p-1}{n-p}D(\log u)}_{L^{q(p-1)}(\partial \Omega)}\leq\bignorm{\frac{H}{n-1}}_{L^{q(p-1)}(\partial \Omega)},
\]
for every $(p,q)\in\Lambda$.
Equalities  in the chain above are achieved  if and only if $\Omega$ is a ball and $u$ is rotationally symmetric. Rearranging the terms we are left with the following estimate for the $p$-capacity of $\Omega$. 
\begin{theorem}
\label{capgl}
Let $\Omega\subset\R^n$ be a bounded, convex and smooth domain, and let $(p, q) \in \Lambda$. Then
\begin{equation}
\label{capf1}
\frac{\abs{\Sf^{n-1}}}{\abs{\partial\Omega}}\leq[{C_p}(\Omega)]^{\frac{q(p-1)-(n-1)}{(n-p)}}\fint\limits_{\partial\Omega}\left(\frac{H}{n-1}\right)^{q(p-1)}.
\end{equation}
Moreover, the equality is achieved if and only if $\Omega$ is a ball.
\end{theorem}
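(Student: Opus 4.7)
The plan is to derive \eqref{capf1} from the global monotonicity $\lim_{t\to 0^+}V_q^p(t)\leq V_q^p(1)$, which is precisely the content of the paragraph preceding the statement, so the main task is to organize the steps cleanly and verify the rigidity.

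First I would substitute the explicit expressions into the global inequality. By Lemma \ref{limitlemma},
\[
\lim_{t\to 0^+}V_q^p(t)={C_p}(\Omega)^{q}\left(\frac{n-p}{p-1}\right)^{q(p-1)}\abs{\Sf^{n-1}},
\]
while by the very definition of $V_q^p$ at $t=1$ one has
\[
V_q^p(1)=C_p(\Omega)^{\frac{(n-1)(q-1)}{n-p}}\int_{\partial\Omega}\abs{Du}^{q(p-1)}\,d\sigma.
\]
Taking the quotient and a $q(p-1)$-th root turns $\lim_{t\to 0^+}V_q^p(t)\leq V_q^p(1)$ into
\[
\abs{\Sf^{n-1}}^{\frac{1}{q(p-1)}}C_p(\Omega)^{\frac{1}{p-1}-\frac{(n-1)(q-1)}{q(n-p)(p-1)}}\leq\Bignorm{\tfrac{p-1}{n-p}D(\log u)}_{L^{q(p-1)}(\partial\Omega)},
\]
which is the first inequality displayed above the statement (using $\abs{D\log u}=\abs{Du}$ on $\partial\Omega=\{u=1\}$).

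Second, I would invoke the local consequence \eqref{lpder} of Theorem \ref{corder} to replace the right-hand side by the $L^{q(p-1)}(\partial\Omega)$-norm of $H/(n-1)$. This is legitimate precisely because $(p,q)\in\Lambda$. Raising both sides to the power $q(p-1)$ and rearranging the powers of $C_p(\Omega)$ and $\abs{\partial\Omega}$ yields exactly \eqref{capf1}, where the exponent $\tfrac{q(p-1)-(n-1)}{n-p}$ appears after simplifying $\tfrac{1}{p-1}-\tfrac{(n-1)(q-1)}{q(n-p)(p-1)}$ and then multiplying through by $q(p-1)$.

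For the rigidity, if equality holds in \eqref{capf1}, then both intermediate inequalities must be equalities: the global inequality $\lim_{t\to 0^+}V_q^p(t)=V_q^p(1)$ forces the monotone function $V_q^p$ to be constant on $(0,1]$, so by Theorem \ref{mainth} its derivative vanishes for some $t\in(0,1]$ and $\Omega$ is a ball with $u$ rotationally symmetric; alternatively, equality in \eqref{lpder} alone already gives the same rigidity by Theorem \ref{corder}. The conversely, for a ball, direct computation with \eqref{urot} shows that both sides of \eqref{capf1} coincide. I do not anticipate a serious obstacle here: all the hard analytic work has already been done in Theorem \ref{mainth}, Theorem \ref{corder}, and Lemma \ref{limitlemma}; the only mild care required is bookkeeping the exponents of $C_p(\Omega)$ in the two different expressions for the endpoints of $V_q^p$.
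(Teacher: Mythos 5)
Your proposal is correct and follows essentially the same route as the paper: the paper itself derives the chain of inequalities from $\lim_{t\to 0^+}V_q^p(t)\leq V_q^p(1)$, Lemma \ref{limitlemma}, and \eqref{lpder} in the paragraph immediately preceding the statement, and Theorem \ref{capgl} is obtained by the same rearrangement of exponents that you carry out. The rigidity argument you give (equality forces constancy of $V_q^p$ and hence vanishing derivative, or equivalently equality in \eqref{lpder}) is also the one the paper relies on.
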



Let us come back to inequality \eqref{capf1}. We are going to deduce two purely geometric consequences of this sharp estimates.  Choosing parameters such that $q(p-1)=n-1$, the term involving the capacity disappears 
and we are left with the classical Willmore inequality \cite{willmore} together with its rigidity statement. Recall that, as observed in Remark \ref{indip}, such a choice of parameters $(p, q)\in\Lambda$  is possible for any $p \in (1, n)$. 
\begin{corollary}[Willmore inequality]
Let $\Omega\subset\R^n$ be a bounded, convex domain with smooth boundary. Then
\[
\abs{\Sf^{n-1}}\leq \int\limits_{\partial\Omega}{\left(\frac{H}{n-1}\right)}^{n-1}d\sigma.
\]
Equality holds if and only if $\Omega$ is a ball.
\end{corollary}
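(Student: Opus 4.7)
The plan is to apply the sharp capacity estimate \eqref{capf1} from Theorem \ref{capgl} and choose the free parameter $q$ so that the capacitary factor disappears from the right-hand side. Since the exponent of $C_p(\Omega)$ in \eqref{capf1} is $[q(p-1)-(n-1)]/(n-p)$, I would set
\[
q \,=\, \frac{n-1}{p-1},
\]
so that $q(p-1) = n-1$ and the factor $[C_p(\Omega)]^{\cdots}$ becomes $1$.

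First I would check that this choice is admissible, that is, $(p,q)\in\Lambda$. The defining inequality $q \geq 1 + (n-p)/[(p-1)(n-1)]$ becomes, after multiplying through by $(p-1)(n-1)$, the condition $(n-1)^2 \geq (p-1)(n-1) + (n-p)$, which simplifies to $(n-2)(n-p) \geq 0$. This holds for every $n \geq 3$ and $1 < p < n$, so the substitution is legitimate. Once admissibility is confirmed, plugging $q = (n-1)/(p-1)$ into \eqref{capf1} yields
\[
\frac{\abs{\Sf^{n-1}}}{\abs{\partial\Omega}}\,\leq\,\fint\limits_{\partial\Omega}\left(\frac{H}{n-1}\right)^{\!n-1}\!d\sigma,
\]
and multiplying both sides by $\abs{\partial\Omega}$ gives the Willmore inequality.

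The rigidity statement is inherited directly from the rigidity clause in Theorem \ref{capgl}: equality forces $\Omega$ to be a ball. Since all the manipulations above are reversible (we multiplied by the strictly positive quantity $\abs{\partial\Omega}$ and substituted a specific admissible value of $q$), equality in the Willmore inequality corresponds exactly to equality in \eqref{capf1} for this choice of parameters.

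There is essentially no obstacle in this proof: once Theorem \ref{capgl} is available, the corollary is a one-line specialization. The only care required is the algebraic verification that $q=(n-1)/(p-1)$ satisfies the constraint defining $\Lambda$, which—as shown above—reduces to the trivial inequality $(n-2)(n-p) \geq 0$. Notably the resulting inequality is independent of $p$, which is consistent with the fact that the choice $q(p-1) = n-1$ eliminates the capacity term entirely.
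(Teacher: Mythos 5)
Your proof is correct and follows exactly the paper's own route: specialize Theorem \ref{capgl} to $q=(n-1)/(p-1)$ so that $q(p-1)=n-1$ eliminates the capacitary factor, and check admissibility via $(n-2)(n-p)\geq 0$ as noted in Remark \ref{indip}. Nothing to add.
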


\begin{remark}
As shown in \cite{AgoMaz2}, when $p=2$ it is not difficult to get rid of the convexity assumption, and thus, to obtain Willmore inequality on any bounded domain with smooth boundary.
However, being able to obtain Willmore inequality via a solution to \eqref{prob_ex} for any $p \in (1, n)$ could have some interest when trying to adapt our techniques to Riemannian manifolds supporting a solution to \eqref{prob_ex}, known in literature as $p$-nonparabolic, or $p$-hyperbolic manifolds. This could be object of future works. 
\end{remark}

Differently from Willmore inequality, the Alexandrov-Fenchel inequality  is a specific feature of Theorem \ref{mainth} for $p \neq 2$. In fact, we are going to pass to the limit in \eqref{capf1}  simultaneously as $p \to 1$ and $q \to \infty$, as sketched in the Introduction. 
\begin{proof}[Proof of Theorem \ref{geometric1}]
Consider a sequence $p_m \to 1^{+}$, and let $q_m =  \frac{p_m}{p_m - 1}$. Obviously $(p_m, q_m)\in\Lambda$ for every $m\in\mathbb{N}$. Plugging $(p, q) = (p_m, q_m)$ into \eqref{capf1} and letting $m \to \infty$ we get
\[
\frac{\abs{\Sf^{n-1}}}{\abs{\partial\Omega}}\leq\big[C_1(\Omega)\big]^{\frac{1 - (n-1)}{(n-1)}}\fint_{\partial\Omega}\frac{H}{n-1} d\sigma,
\]
where we used that 
\[
\lim_{p \to 1^{+}} C_p(\Omega)= C_1 (\Omega)
\]
as proved in \cite[Theorem 11]{meyers}. 
Finally, since by \cite[Lemma 2.2.5]{mazia}  
\[
C_1(\Omega)=\frac{\abs{\partial\Omega}}{\abs{\Sf^{n-1}}},
\]
\newline{}
a simple rearrangement of terms ends the proof. 
\end{proof}


In analogy with what had just been done, we are going to exploit the fact that, due to Theorem \ref{mainthsup},
\begin{equation}
\label{limsup}
\lim_{\tau \to 0^+} \sup_{\{u=\tau\}}\frac{\abs{Du}}{u^{\frac{n-1}{n-p}}} \leq  \sup_{\{u=1\}}\frac{\abs{Du}}{u^{\frac{n-1}{n-p}}}= \sup_{\partial \Omega}{\abs{Du}}.
\end{equation}
The limit on the left hand side of the above inequality was computed in Lemma \ref{limitlemma} as 
\[
\lim_{\tau \to 0^+} \sup_{\{u=\tau\}}\frac{\abs{Du}}{u^{\frac{n-1}{n-p}}}=\left(\frac{n-p}{p-1}\right) C_p(\Omega)^{-\frac{1}{n-p}} .
\]
Thus, inequality \eqref{limsup} together with \eqref{linftyder} immediately yields the following result. 
\begin{theorem}
\label{globinf}
Let $\Omega \subset \R^n$ be a smooth, bounded and convex set, and let $u$ be a solution to \eqref{prob_ex}. Then the following chain of inequalities holds true.
\begin{equation}
\label{chainsup}
\left(\frac{1}{C_p(\Omega)}\right)^{\frac{1}{n-p}} \leq \sup_{\partial \Omega} \frac{p-1}{n-p} \abs{Du} \leq \sup_{\partial \Omega} \frac{H}{n-1}.
\end{equation}
Moreover, equality is achieved in one of the above inequalities if and only if $\Omega$ is a ball and $u$ is rotationally symmetric.
\end{theorem}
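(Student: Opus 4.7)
The theorem is an assembly result: the two inequalities in \eqref{chainsup} have already been prepared by the preceding discussion, and the plan is simply to put the pieces together and verify the rigidity in each case.

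\textbf{Second inequality and its rigidity.} This is nothing but a rewriting of \eqref{linftyder} from Theorem \ref{corder}. Indeed, dividing both sides of \eqref{linftyder} by $(n-p)/(p-1) > 0$ yields
\[
\sup_{\partial \Omega} \frac{p-1}{n-p}\abs{Du} \;\leq\; \sup_{\partial \Omega} \frac{H}{n-1},
\]
and the rigidity statement (equality only for balls with $u$ rotationally symmetric) is already contained in Theorem \ref{corder}.

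\textbf{First inequality.} Here I would invoke the monotonicity of $V_\infty^p$ from part (i) of Theorem \ref{mainthsup}: since $V_\infty^p$ is non-decreasing on $(0,1]$, one has
\[
\lim_{\tau \to 0^+} V_\infty^p(\tau) \;\leq\; V_\infty^p(1).
\]
The left hand side was computed in Lemma \ref{limitlemma} (equation \eqref{limvqinf}) to be $\bigl(\tfrac{n-p}{p-1}\bigr) C_p(\Omega)^{-1/(n-p)}$, while on the right, since $u \equiv 1$ on $\partial\Omega = \{u=1\}$, the definition \eqref{defvqp} of $V_\infty^p$ gives $V_\infty^p(1) = \sup_{\partial\Omega} \abs{Du}$. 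Multiplying through by $(p-1)/(n-p)$ produces
\[
\Bigl(\frac{1}{C_p(\Omega)}\Bigr)^{\!\frac{1}{n-p}} \;\leq\; \sup_{\partial \Omega} \frac{p-1}{n-p}\abs{Du},
\]
which is the required inequality.

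\textbf{Rigidity of the first inequality.} Suppose equality holds. Then $\lim_{\tau\to 0^+} V_\infty^p(\tau) = V_\infty^p(1)$. By the monotonicity of $V_\infty^p$, for any fixed $t_1 \in (0,1)$ we have $\lim_{\tau\to 0^+} V_\infty^p(\tau) \leq V_\infty^p(t_1) \leq V_\infty^p(1)$, so the equality forces $V_\infty^p(t_1) = V_\infty^p(1)$ for some $t_1 \neq 1$. The rigidity in part (i) of Theorem \ref{mainthsup} then implies that $\Omega$ is a ball and $u$ is rotationally symmetric. I do not expect any technical obstacle; the only minor care needed is to justify the passage from equality of endpoint limits to equality of $V_\infty^p$ at two distinct points in $(0,1]$, which is immediate from monotonicity as above. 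Combining the two rigidity statements concludes the proof.
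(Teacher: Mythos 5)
Your proof is correct and follows essentially the same route as the paper: the first inequality is obtained by comparing the limit $\lim_{\tau\to 0^+}V_\infty^p(\tau)$ (computed in Lemma \ref{limitlemma}) with $V_\infty^p(1) = \sup_{\partial\Omega}\abs{Du}$ via the monotonicity of $V_\infty^p$, and the second inequality together with its rigidity is exactly \eqref{linftyder} from Theorem \ref{corder}. The paper leaves the rigidity of the first inequality implicit, and your explicit reduction to the two-level-set rigidity criterion in Theorem \ref{mainthsup}(i) is the natural way to fill that in.
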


The above theorem has the following consequence in the framework of overdetermined boundary problems.
\begin{corollary}
\label{pinch}
Let $\Omega \subset \R^n$ be a smooth, bounded and convex set, and let $u$ be a solution to \eqref{prob_ex}. Assume that the (interior) normal derivative of $u$ on $\partial \Omega$ satisfies
\begin{equation}
\label{condpinch}
\bigabs{\frac{\partial u}{\partial \nu}} \leq \frac{n-p}{p-1} \left( \frac{\abs{\Sf^{n-1}}}{\abs{\partial \Omega}}\right)^{\frac{1}{n-1}},
\end{equation}
then $\Omega$ is a ball and $u$ is rotationally symmetric.
\end{corollary}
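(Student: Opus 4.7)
The plan is to produce a sandwich argument that forces every inequality in the chain \eqref{chainsup} of Theorem \ref{globinf} to be an equality, and then invoke its rigidity statement.

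First I would rewrite the hypothesis as a pointwise bound
\[
|Du| \;\leq\; \frac{n-p}{p-1}\left(\frac{|\Sf^{n-1}|}{|\partial\Omega|}\right)^{\!\frac{1}{n-1}}\qquad\text{on }\partial\Omega,
\]
using that $\partial u/\partial\nu = -|Du|$ on $\partial\Omega$. Raising to the power $p-1$ and integrating against the formula for $C_p(\Omega)$ provided by Lemma \ref{cplemma}, one obtains
\[
C_p(\Omega) \;\leq\; \left(\frac{|\partial\Omega|}{|\Sf^{n-1}|}\right)^{\!\frac{n-p}{n-1}},
\]
or equivalently
\[
\left(\frac{1}{C_p(\Omega)}\right)^{\!\frac{1}{n-p}} \;\geq\; \left(\frac{|\Sf^{n-1}|}{|\partial\Omega|}\right)^{\!\frac{1}{n-1}}.
\]

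Next I would combine this with the left inequality of \eqref{chainsup} in Theorem \ref{globinf} and with the hypothesis itself, to produce the chain
\[
\left(\frac{|\Sf^{n-1}|}{|\partial\Omega|}\right)^{\!\frac{1}{n-1}} \;\leq\; \left(\frac{1}{C_p(\Omega)}\right)^{\!\frac{1}{n-p}} \;\leq\; \sup_{\partial\Omega}\frac{p-1}{n-p}|Du| \;\leq\; \left(\frac{|\Sf^{n-1}|}{|\partial\Omega|}\right)^{\!\frac{1}{n-1}},
\]
where the first inequality is what we just derived, the middle one is \eqref{chainsup}, and the third is the standing assumption. Since the two ends coincide, every inequality is in fact an equality.

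In particular the middle identity says that equality is attained in the left inequality of \eqref{chainsup}, and the rigidity part of Theorem \ref{globinf} then forces $\Omega$ to be a ball and $u$ to be rotationally symmetric. There is no substantial obstacle here: the only bookkeeping to be careful about is the direction of the capacity estimate after inverting, and making sure one applies the rigidity clause to the correct inequality in the chain of Theorem \ref{globinf}.
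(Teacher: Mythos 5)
Your proposal is correct and coincides essentially with the paper's own argument: both derive the capacity bound $C_p(\Omega)\leq(|\partial\Omega|/|\Sf^{n-1}|)^{(n-p)/(n-1)}$ from the hypothesis via Lemma \ref{cplemma}, sandwich it between the first inequality of \eqref{chainsup} and the hypothesis itself, and conclude by the rigidity part of Theorem \ref{globinf}. The only cosmetic difference is that you take an $(n-p)$-th root throughout the chain while the paper leaves the powers in place.
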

\begin{proof}
By \eqref{capf}, and assuming \eqref{condpinch},
we obtain the inequality
\[
C_p(\Omega) =\frac{1}{\left(\frac{n-p}{p-1}\right)^{p-1}\abs{\Sf^{n-1}}}\int_{\partial \Omega} \abs{Du}^{p-1} d\sigma \leq \left(\frac{\abs{\Sf^{n-1}}}{\abs{\partial \Omega}}\right)^{-\frac{n-p}{n-1}}.
\]
The above inequality together with the first inequality in \eqref{chainsup} implies
\[
\left(\frac{\abs{\Sf^{n-1}}}{\abs{\partial \Omega}}\right)^{\frac{n-p}{n-1}} \leq \left(\frac{1}{C_p(\Omega)}\right) \leq \left(\sup_{\partial {\Omega}} \frac{p-1}{n-p} \abs{Du}\right)^{n-p} \leq \left(\frac{\abs{\Sf^{n-1}}}{\abs{\partial \Omega}}\right)^{\frac{n-p}{n-1}}.
\]
In particular, equality must occur in the above chain of inequalities, and thus the rigidity part of Theorem \ref{globinf} allows to conclude.
\end{proof}

We conclude rephrasing Theorem \ref{globinf} as a sphere theorem under a pinching condition on the mean curvature of $\partial \Omega$, see also \cite{borghini}.

\begin{corollary}[Sphere Theorem]
Let $\Omega \subset \R^n$ be a smooth, bounded and convex set, and let $u$ be a solution to \eqref{prob_ex}. If the mean curvature $H$ of $\partial \Omega$ satisfies
\[
\frac{H}{n-1}\leq \left(\frac{1}{C_p(\Omega)}\right)^{\frac{1}{n-p}},
\]
then $\Omega$ is a ball.
\end{corollary}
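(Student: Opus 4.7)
The plan is to deduce this directly from the chain of inequalities in Theorem \ref{globinf}, which asserts
\[
\left(\frac{1}{C_p(\Omega)}\right)^{\frac{1}{n-p}} \leq \sup_{\partial \Omega} \frac{p-1}{n-p} \abs{Du} \leq \sup_{\partial \Omega} \frac{H}{n-1},
\]
together with its rigidity statement (equality in either inequality forces $\Omega$ to be a ball).

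My strategy is to observe that the assumed pointwise pinching condition $H/(n-1) \leq (1/C_p(\Omega))^{1/(n-p)}$ on $\partial\Omega$ immediately gives
\[
\sup_{\partial \Omega} \frac{H}{n-1} \leq \left(\frac{1}{C_p(\Omega)}\right)^{\frac{1}{n-p}}.
\]
Combining this with the above chain, I obtain
\[
\left(\frac{1}{C_p(\Omega)}\right)^{\frac{1}{n-p}} \leq \sup_{\partial \Omega} \frac{p-1}{n-p}\abs{Du} \leq \sup_{\partial \Omega} \frac{H}{n-1} \leq \left(\frac{1}{C_p(\Omega)}\right)^{\frac{1}{n-p}},
\]
so every inequality must in fact be an equality. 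Invoking the rigidity part of Theorem \ref{globinf} then forces $\Omega$ to be a ball (and $u$ to be rotationally symmetric).

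Since the entire argument is a one-line consequence of Theorem \ref{globinf} once the hypothesis is rewritten as a statement about the supremum, there is essentially no obstacle; the only thing to verify is that the sup inequality follows from the pointwise one, which is trivial, and that the rigidity assertion in Theorem \ref{globinf} applies whether equality is detected in the first or second inequality of the chain (it does, by the statement of that theorem).
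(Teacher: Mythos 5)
Your argument is correct and is exactly the intended one: the paper explicitly introduces this corollary as ``rephrasing Theorem \ref{globinf} as a sphere theorem,'' and your derivation (pass to the supremum in the pinching hypothesis, sandwich with the chain of inequalities in Theorem \ref{globinf}, then invoke its rigidity clause) is precisely that rephrasing.
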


\section{Conformal setting}
\subsection{A conformally equivalent formulation of the problem.}
We present an equivalent formulation of problem \eqref{prob_ex} which is based on a conformal change of the Euclidean metric. We set up the notation that we will use for all the rest of the paper. We first let
\begin{equation}
\label{m}
M := \R^n \setminus \Omega.
\end{equation}
 We denote by $g_{\R^n}$ the standard flat Euclidean metric of $\R^n$ and we consider, for the solution $u$ of \eqref{prob_ex}, the conformally equivalent metric given by
\begin{equation}\label{defmet}
g:=u^{2\frac{p-1}{n-p}}g_{\R^n}.
\end{equation}
Finally,
\begin{equation}\label{defpsi}
\psi:=-\frac{n-2}{n-p}(p-1)\log u
\end{equation}
(note that $\psi>0$), so that the metric $g$ can be equivalently written as 
\begin{equation*} g=e^{-\frac{2\psi}{n-2}}g_{\R^n}.
\end{equation*}
Fixing local coordinates $\{x^{\alpha}\}_{\alpha=1}^n$ in $M$ and using standard formulas \cite{HE} (see also \cite{AgoMaz1}) we get
\begin{align}\label{cambioCr}
\Gamma^{\gamma}_{\alpha\beta}&=G^{\gamma}_{\alpha\beta}-\frac{1}{n-2}\big(\delta^{\gamma}_{\alpha}\partial_{\beta} \psi+\delta^{\gamma}_{\beta}\partial_{\alpha} \psi-g_{\alpha\beta}^{\R^n}g_{\R^n}^{\gamma\eta}\partial_{\eta}\psi\big)\\
\label{cambioRic}
R_{\alpha\beta}^{g}&=R^{\R^n}_{\alpha\beta}+D_{\alpha}D_{\beta}\psi+\frac{\partial_{\alpha}\psi\partial_{\beta}\psi}{n-2}-\frac{|D\psi|^2-\Delta\psi}{n-2}g_{\alpha\beta}^{\R^n},\\
\label{cambiocon}
\nabla_{\alpha}\nabla_{\beta}w&=D_{\alpha}D_{\beta}w+\frac{1}{n-2}\Big(\partial_{\alpha}w\partial_{\beta}\psi+\partial_{\alpha}\psi\partial_{\beta}w-\left\langle Dw, D\psi\right\rangle g_{\alpha\beta}^{\R^n}\Big)\quad \forall w\in C^2(M),\\
\label{cambioLap}
\Delta_g w&=e^{\frac{2\psi}{n-2}}\left(\Delta w-\left\langle Dw,D\psi\right\rangle\right)\quad \forall w\in C^2(M).
\end{align}
 where $\Gamma^{\gamma}_{\alpha\beta}$ and $G^{\gamma}_{\alpha\beta}$ are the Christoffel symbols associated to the metric $g$ and $g_{\R^n}$ respectively, $R_{\alpha\beta}^{g}$ and $R^{\R^n}_{\alpha\beta}$ are the components of the Ricci tensor with respect to the metric $g$ and $g_{\R^n}$ respectively and $\nabla_{\alpha}$ and $D_{\alpha}$ are the covariant derivatives of the metric $g$ and $g_{\R^n}$ respectively. Notice that throughout this paper the Einstein summation convention for the sum over repeated indices is adopted. 
Let $X$ a vector field. Therefore, 
\begin{equation*}
\dive_{g}(X)=g^{ik}\big(\frac{\partial X_k}{\partial x_i}-\Gamma^l_{ik}X_l\big)=u^{-2\frac{p-1}{n-p}}g_{\R^n}^{ik}\big(\frac{\partial X_k}{\partial x_i}-\Gamma^l_{ik}X_l\big).
\end{equation*}
Using \eqref{cambioCr}, we get
\begin{equation*}
\Gamma^l_{ik}=G^l_{ik}+\frac{p-1}{n-p}\Big(\delta^{l}_{i}\frac{\partial_{k} u}{u}+\delta^{l}_{k}\frac{\partial_{i} u}{u}-g_{ik}^{\R^n}\frac{D^l u}{u}\Big)
\end{equation*}
and
\begin{align}
\nonumber
&\dive_{g}(X)\\
\nonumber
&=u^{-2\frac{p-1}{n-p}}g_{\R^n}^{ik}\left(\frac{\partial X_k}{\partial x_i}-G^l_{ik}X_l-\frac{p-1}{n-p}X_i\frac{\partial_{k} u}{u}-\frac{p-1}{n-p}X_k\frac{\partial_{i} u}{u}+g_{ik}^{\R^n}\frac{p-1}{n-p}\left\langle \frac{D u}{u}, X\right\rangle_{\R^n}\right)\\
&=u^{-2\frac{p-1}{n-p}}\dive_{g_{\R^n}} X+\frac{(n-2)(p-1)}{(n-p)}\left\langle \frac{D u}{u}, X \right\rangle_{g}.
\end{align}
where $\left\langle\cdot ,\cdot \right\rangle_{\R^n}$ and $\left\langle\cdot ,\cdot \right\rangle_{g}$ are the scalar products associated to $g_{\R^n}$ and $g$ respectively. Setting $X=|D u|^{p-2}D u$ and recalling that $\Delta_p u=0$ we obtain
\begin{align}\label{formPlap}
\dive_g(|D u|^{p-2}D u)&=\frac{(n-2)(p-1)}{n-p}\left\langle \frac{D u}{u}, |D u|^{p-2}D u\right\rangle_g\\
\nonumber
&=\frac{(n-2)(p-1)}{n-p}\frac{|D u|^{p-2}}{u}\left\langle D u, D u\right\rangle_g.
\end{align}
By standard computations it is easy to see that
\begin{align}\label{formgrad}
|D u|^{p-2}= u^{\frac{(p-1)(p-2)}{n-p}} |D u|_{g}^{p-2}.
\end{align}
Using \eqref{formgrad} in \eqref{formPlap} we get
\begin{align*}
\dive_g(u^{\frac{(p-1)(p-2)}{n-p}} |D u|_{g}^{p-2}D u)=\frac{(n-2)(p-1)}{n-p} u^{\frac{(p-1)(p-2)}{n-p}-1}|D u|_g^{p}
\end{align*}
we conclude that
\begin{align}\label{eq_conf}
\Delta_{p;g}u=\dive_g(|D u|_g^{p-2}D u)=(p-1)\frac{|D u|_g^p}{u}.
\end{align}
\begin{lemma}
Let $u\in C^{\infty}(\mathbb{R}^n\setminus \overline{\Omega})$ be a positive solution to $\Delta_p u=0$ in $\mathbb{R}^n\setminus \overline{\Omega}$. Then
\[
\Delta_{p;g}(\log u)=0\qquad \mbox{in}\ \mathbb{R}^n\setminus \overline{\Omega}.
\]
\end{lemma}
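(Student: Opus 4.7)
The plan is to derive the claim directly from identity \eqref{eq_conf} via a one-line product rule computation. Set $v = \log u$, so that
\[
Dv = \frac{Du}{u}, \qquad |Dv|_g = \frac{|Du|_g}{u},
\]
and therefore $|Dv|_g^{p-2}\,Dv = u^{-(p-1)}|Du|_g^{p-2}\,Du$.

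Applying the divergence product rule $\dive_g(fX) = f\,\dive_g(X) + \langle Df, X\rangle_g$ with $f = u^{-(p-1)}$ and $X = |Du|_g^{p-2}\,Du$, I would compute
\[
\Delta_{p;g}(\log u) = \frac{1}{u^{p-1}}\,\dive_g\!\big(|Du|_g^{p-2}Du\big) + \Big\langle D\big(u^{-(p-1)}\big),\, |Du|_g^{p-2}Du\Big\rangle_g.
\]
The first term is $u^{-(p-1)}\,\Delta_{p;g}u$, which by \eqref{eq_conf} equals $(p-1)|Du|_g^p/u^p$. The second term equals
\[
-\frac{p-1}{u^p}\,\langle Du, Du\rangle_g\,|Du|_g^{p-2} = -\frac{(p-1)|Du|_g^p}{u^p}.
\]
The two contributions cancel and the identity $\Delta_{p;g}(\log u) = 0$ follows.

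There is no real obstacle here: the only point to be careful about is that $|\cdot|_g$ is positively homogeneous of degree $1$, so $|Dv|_g = |Du|_g/u$ uses only $u > 0$, which is guaranteed by Theorem \ref{lewis}(ii). Since $|Du| \neq 0$ on $M$, the vector field $|Du|_g^{p-2}Du$ is smooth and the product rule applies pointwise, with no need to worry about the usual degeneracy of the $p$-Laplacian.
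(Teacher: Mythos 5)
Your proof is correct and essentially identical to the paper's: both factor $|\nabla(\log u)|_g^{p-2}\nabla(\log u) = u^{1-p}|\nabla u|_g^{p-2}\nabla u$, apply the divergence product rule, and then cancel the two resulting terms via the conformal identity \eqref{eq_conf}. The only cosmetic difference is that you write $D$ where the paper uses $\nabla$ for the $g$-gradient, but since $u$ is a scalar the gradients (and hence $|\cdot|_g$-norms) agree, as the paper itself notes.
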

\begin{proof}
Let $f:=\log u$. Clearly, $\nabla f= \frac{\nabla u}{u}$ and $|\nabla f|_g^{p-2}=\frac{|\nabla u|_g^{p-2}}{u^{p-2}}$. Thus,
\begin{align*}\Delta_{p;g} f=\dive_g(|\nabla f|_{g}^{p-2}\nabla f)=\dive_g\left(\frac{|\nabla u|_g^{p-2}}{u^{p-2}}\frac{\nabla u}{u}\right).\end{align*}
Therefore,
	\begin{align*}\Delta_{p;g} f&=u^{1-p}\Delta_{p;g}u+\left\langle \nabla u^{1-p}, |\nabla u|_{g}^{p-2}\nabla u\right\rangle_g\\
	&=u^{1-p}\Delta_{p;g}u+(1-p)u^{-p}|\nabla u|^p_g
\end{align*}
and recalling \eqref{eq_conf} and using $|Du|_g=|\nabla u|_g$ we get the thesis.
\end{proof}
Keeping in mind formulas \eqref{cambioRic} and \eqref{cambiocon}, recalling that $\psi=-\frac{(n-2)(p-1)}{n-p}\log u=-\frac{(n-2)(p-1)}{n-p}f$ and $R^{\R^n}_{\alpha\beta}=0$ we obtain
\begin{align}\label{ric}
R_{\alpha\beta}^{g}=&-\frac{(n-2)(p-1)}{n-p}D_{\alpha}D_{\beta}f +\frac{(n-2)(p-1)^2}{(n-p)^2}\partial_{\alpha}f\partial_{\beta}f\\
\nonumber
&-\frac{p-1}{n-p}\Big(\Delta f+\frac{(n-2)(p-1)}{n-p}|Df|^2\Big)e^{-\frac{2(p-1)f}{n-p}}g_{\alpha\beta}
\end{align}
and
\begin{align}\label{cambiocon2}
D_{\alpha}D_{\beta}f=\nabla_{\alpha}\nabla_{\beta}f+\frac{p-1}{n-p}\Big(2\partial_{\alpha}f\partial_{\beta}f-|\nabla f|_{g}^2g_{\alpha\beta}\Big).
\end{align}
Using \eqref{cambiocon2} in \eqref{ric} we get
\begin{align}\label{cambiocon3}
R_{\alpha\beta}^{g}=&-\frac{(n-2)(p-1)}{n-p}\nabla_{\alpha}\nabla_{\beta}f-\frac{(n-2)(p-1)^2}{(n-p)^2}\partial_{\alpha}f\partial_{\beta}f+\\
\nonumber
&+\frac{(n-2)(p-1)^2}{(n-p)^2}|\nabla f|^2_g g_{\alpha\beta}\\
\nonumber
&-\frac{p-1}{n-p}\Delta f e^{-\frac{2(p-1)f}{n-p}}g_{\alpha\beta}-\frac{(n-2)(p-1)^2}{(n-p)^2}|Df|^2 e^{-\frac{2(p-1)f}{n-p}}g_{\alpha\beta}\\
\nonumber
&\!\!\!\!\!=\nabla_{\alpha}\nabla_{\beta}\psi-\frac{\partial_{\alpha}\psi\partial_{\beta}\psi}{n-2}+\frac{|\nabla \psi|^2_g}{n-2}g_{\alpha\beta}+\frac{1}{n-2}e^{-\frac{2(p-1)f}{n-p}}\left(\Delta\psi-|D\psi|^2\right)g_{\alpha\beta}\\
\nonumber
&\!\!\!\!\!=\nabla_{\alpha}\nabla_{\beta}\psi-\frac{\partial_{\alpha}\psi\partial_{\beta}\psi}{n-2}+\frac{|\nabla \psi|^2_g}{n-2}g_{\alpha\beta}+\frac{1}{n-2}\Delta_g\psi g_{\alpha\beta}
\end{align}
where in the last equality we used \eqref{cambioLap}.
Since \[0=\Delta_{g;p}\psi=|\nabla \psi|_{g}^{p-2}\Delta_g \psi+\left\langle \nabla|\nabla \psi|_{g}^{p-2},\nabla \psi\right\rangle_g\] we obtain
\begin{align}\label{espLap}
	\Delta_g \psi&=-(p-2)\frac{\nabla^2\psi(\nabla \psi,\nabla \psi)}{|\nabla \psi|_g^2}\\
	\nonumber
	&=-\frac{p-2}{2}\frac{\left\langle \nabla|\nabla \psi|_g^2, \nabla \psi\right\rangle_g}{|\nabla \psi|_g^2}.
\end{align}
Using \eqref{espLap} in \eqref{cambiocon3} we can write
\begin{align}\label{cambiocon5}
R_{\alpha\beta}^{g}=\nabla_{\alpha}\nabla_{\beta}\psi-\frac{\partial_{\alpha}\psi\partial_{\beta}\psi}{n-2}+\left(\frac{|\nabla \psi|^2_g}{n-2} -\frac{p-2}{n-2}\frac{\nabla^2 \psi(\nabla \psi,\nabla \psi)}{|\nabla \psi|_g^2} \right)g_{\alpha\beta}
\end{align}
and in particular
\begin{align*}
\ric_g-\nabla^2 \psi+\frac{d\psi\otimes d\psi}{n-2}=\left(\frac{|\nabla \psi|_g^2}{n-2}-\frac{p-2}{n-2}\frac{\nabla^2 \psi(\nabla \psi,\nabla \psi)}{|\nabla \psi|_g^2}\right)g.
\end{align*}
We are now in position to reformulate problem \eqref{prob_ex} as 
\begin{align}\label{prob_ex_rif}
\left\{\begin{array}{lll}
\qquad\qquad\qquad\,\Delta_{p;g} \psi=0 & \mbox{in} & M\\
\ric_g-\nabla^2 \psi+\frac{d\psi\otimes d\psi}{n-2}=\left(\frac{|\nabla \psi|_g^2}{n-2}-\frac{p-2}{n-2}\frac{\nabla^2 \psi(\nabla \psi,\nabla \psi)}{|\nabla \psi|_g^2}\right)g & \mbox{in} & M\\
\qquad\qquad\qquad\qquad\psi=0 & \mbox{on} & \partial M\\
\qquad\qquad\qquad\,\,\,\,\,\psi(x)\to +\infty & \mbox{as} & |x|\to\infty.
\end{array}\right.
\end{align}
We explicitly observe that if $p=2$ then \eqref{prob_ex_rif} coincides with the problem studied in \cite{AgoMaz1}.

We conclude this part recalling the useful relation between $\abs{\nabla\psi}_g$ and $\abs{Du}$:

\begin{equation}\label{cambgrad}
|\nabla\psi|_g=\frac{(n-2)(p-1)}{n-p}\frac{|Du|}{u^{\frac{n-1}{n-p}}}
\end{equation}

\subsection{The geometry of the level sets of $u$ and $\psi$}
Let us consider the $g^{\mathbb{R}^n}-$unit vector field \[\nu:=-Du/|Du|=D\psi/|D\psi|\] and the $g-$unit vector field \[\nu_g:=-\nabla u/|\nabla u|_g=\nabla\psi/|\nabla \psi|_g.\] Accordingly, we consider the second fundamental forms $h$ and $h_g$ of the level sets of $u$ and $\psi$ with respect to the Euclidean metric $g^{\mathbb{R}^n}$ and the conformally-related ambient metric $g$ are respectively given by
\begin{equation*}
h_{ij}=-\frac{D^2_{ij}u}{|Du|}=\frac{D^2_{ij}\psi}{|D\psi|},\qquad h_{ij}^{g}=-\frac{\nabla^2_{ij}u}{|\nabla u|_g}=\frac{\nabla^2_{ij}\psi}{|\nabla\psi|_g}\quad \mbox{for}\ i,j=1,\ldots, n-1.
\end{equation*}
Taking the trace of the above expressions with respect to the induced metric we obtain the following expressions for the mean curvatures in the two ambients
\begin{equation}\label{meancurv}
H=-\frac{\Delta u}{|D u|}+\frac{D^2 u(D u,D u)}{|D  u|^3},\qquad H_g= \frac{\Delta_g \psi}{|\nabla \psi|_g}-\frac{\nabla^2\psi(\nabla \psi,\nabla \psi)}{|\nabla \psi|_g^3}.
\end{equation}
Recalling that $\Delta_p u=0$ and $\Delta_{g;p} \psi=0$ we have
\begin{equation}
\label{meancurvf}
H=\frac{p-1}{p} \frac{\left\langle D|D u|^p, D u\right\rangle}{|D u|^{p+1}}=(p-1)\frac{D^2 u(D u,D u)}{|D u|^3},
\end{equation}
and
\begin{equation}\label{meancurv2}
\qquad H_g=-\frac{p-1}{p} \frac{\left\langle \nabla|\nabla \psi|_g^p, \nabla \psi\right\rangle_p}{|\nabla \psi|_g^{p+1}}=-(p-1)\frac{\nabla^2\psi(\nabla\psi,\nabla\psi)}{|\nabla\psi|_g^3}.
\end{equation}
The second fundamental forms $h$ and $h_g$ are related by the following formula:
\begin{equation*}
h_g(X, Y)=u^{\frac{p-1}{n-p}}\Big(h(X, Y) - \frac{p-1}{n-p}\frac{\abs{Du}}{u}\langle X, Y\rangle\Big),
\end{equation*}
for any $X, Y$ tangent vectors to the level sets of $u$. Tracing the above identity with respect to $g$ we obtain the useful relation between the mean curvatures $H$ and $H_g$
\begin{equation}
\label{meancurv1}
H_g=u^{-\frac{p-1}{n-p}}\bigg(H-\frac{(n-1)(p-1)}{(n-p)}\frac{\abs{Du}}{u}\bigg).
\end{equation}
Finally, we recall the relation between the Lebesgue measure $d\mu$ and the volume measure $d \mu_g$ induced by $g$ on $M$ 
\begin{equation}
\label{meas1}
d \mu_g= u^{(p-1)\frac{n}{n-p}} d\mu
\end{equation}
and the relation between $(n-1)-$dimensional Hausdorff measure $\mathcal{H}^{n-1}=d\sigma$ and the surface element $d \sigma_g$ induced by $g$
\begin{equation}
\label{meas2}
d\sigma_g = u^{(p-1)\frac{n-1}{n-p}} d\sigma.
\end{equation}
\subsection{The conformal version of the main Theorems}
We start introducing the conformal version of the functions $V_q^p$ introduced in \eqref{defvqp}. Fix $p\in (1, n), q\in [0,\infty)$ and let $\psi$ be as in \eqref{defpsi}. We define $\Psi_q^p(t):[0, +\infty)\to [0, \infty)$ by
\begin{equation}
\label{psi}
\Psi_q^p(s)=\int_{\{\psi=s\}}\abs{\nabla\psi}_g^{q(p-1)}d\sigma_g.
\end{equation}

\begin{remark}
\label{facts}
Clearly,
\begin{equation*}
\Psi_0^p(s)=\left| \{\psi=s\}\right|_g=\sigma_g(\{\psi=s\})
\end{equation*}
Moreover, for $q=1$ it follows from $\Delta_{p,g}\psi=0$ and the Divergence Theorem that the function
\begin{equation}
\label{bound1}
\Psi_1^p(s)=\int_{\{\psi=s\}}|\nabla\psi|_g^{p-1} d\sigma_g
\end{equation}
is constant in $[0,\infty)$.
\end{remark} 
Remark \ref{facts} together with Proposition \ref{mono_and} readily implies that $\Psi_q^p$ is bounded.
\begin{lemma} 
\label{bound}
Let $p\in (1, n)$ and $q\in [1,\infty)$. Then there exists $C=C(\Omega,n,p)>0$ independent of $s$ such that 
\begin{equation*}
\Psi_q^p (s) \leq C
\end{equation*}
for any $s\in [0,\infty)$. 
\end{lemma}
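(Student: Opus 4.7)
\medskip

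\noindent\textbf{Proof proposal.} The plan is to reduce the estimate to two ingredients: the constancy of $\Psi_1^p$ already recorded in Remark \ref{facts}, and a uniform $L^\infty$ bound on $|\nabla\psi|_g$ over all of $M$. Since $q\geq 1$, one has $(q-1)(p-1)\geq 0$, and I would simply write
\[
|\nabla\psi|_g^{q(p-1)} \;=\; |\nabla\psi|_g^{(q-1)(p-1)}\,|\nabla\psi|_g^{p-1},
\]
so that integrating over $\{\psi=s\}$ against $d\sigma_g$ yields
\[
\Psi_q^p(s) \;\leq\; \Bigl(\sup_{M}|\nabla\psi|_g\Bigr)^{(q-1)(p-1)}\Psi_1^p(s) \;=\; \Bigl(\sup_{M}|\nabla\psi|_g\Bigr)^{(q-1)(p-1)}\Psi_1^p(0),
\]
which is the desired conclusion provided $\sup_M|\nabla\psi|_g<\infty$.

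To establish the latter bound I would use the identity \eqref{cambgrad}, namely
\[
|\nabla\psi|_g \;=\; \frac{(n-2)(p-1)}{n-p}\,\frac{|Du|}{u^{\frac{n-1}{n-p}}},
\]
and analyze the three regions separately. First, by Proposition \ref{mono_and}, as $|x|\to\infty$,
\[
\frac{|Du|}{u^{\frac{n-1}{n-p}}} \;\longrightarrow\; \frac{n-p}{p-1}\,C_p(\Omega)^{-\frac{1}{n-p}},
\]
(a direct computation from items (i)–(ii) of that proposition), so $|\nabla\psi|_g$ admits a finite positive limit at infinity and hence is bounded on the exterior of a sufficiently large ball. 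Second, near $\partial\Omega$ one has $u\to 1$ and, by the smoothness of $u$ up to $\partial\Omega$ granted by Theorem \ref{lewis} together with the standard $C^{1,\alpha}$ boundary regularity for the $p$-Laplace equation on smooth convex domains, $|Du|$ is bounded in a tubular neighborhood of $\partial\Omega$; hence $|\nabla\psi|_g$ is bounded there as well. Finally, on the compact annular region in between, continuity of $|\nabla\psi|_g$ on $\mathbb{R}^n\setminus\overline\Omega$ (Theorem \ref{lewis}~(i)–(ii)) yields boundedness automatically.

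Combining these three regimes gives a constant $L=L(\Omega,n,p)$ with $|\nabla\psi|_g\leq L$ on $M$, and plugging this into the interpolation estimate above provides the constant $C=L^{(q-1)(p-1)}\Psi_1^p(0)$, which depends only on $\Omega,n,p$. The only nontrivial step is the boundary regularity used to control $|Du|$ near $\partial\Omega$; this is however a classical result in the $p$-Laplace literature (cited in the paper via \cite{dibenedetto,lewis-reg,uhlenbeck,uraltseva,evans}) and requires no new work, so the main content of the proof is really just the decomposition together with Proposition \ref{mono_and} and Remark \ref{facts}.
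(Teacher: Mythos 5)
Your proof is correct and follows essentially the same route as the paper: factor $|\nabla\psi|_g^{q(p-1)} = |\nabla\psi|_g^{(q-1)(p-1)}|\nabla\psi|_g^{p-1}$, pull the global sup of $|\nabla\psi|_g$ out of the integral, and invoke the constancy of $\Psi_1^p$ from Remark \ref{facts}. The only difference is that you elaborate on why $\sup_M|\nabla\psi|_g<\infty$ by splitting into the regimes near $\partial\Omega$, at infinity, and in between, whereas the paper simply cites Theorem \ref{lewis}, Proposition \ref{mono_and} and \eqref{cambgrad} for this bound without spelling out the near-boundary control of $|Du|$.
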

\begin{proof}
By Theorem \ref{lewis}, Proposition \ref{mono_and} and formula \ref{cambgrad}, there exists a constant $C=C(\Omega,n,p)>0$ such that
\begin{equation*}
0<\abs{\nabla\psi}_g\leq C
\end{equation*}
in $M=\R^n\setminus\overline{\Omega}$. Then, we can write
\begin{equation*}
\Psi_q^p(s)=\int_{\{\psi=s\}}\abs{\nabla\psi}_g^{(p-1)(q-1)}\abs{\nabla\psi}_g^{p-1} d\sigma_g\leq C^{(p-1)(q-1)}\int_{\{\psi=s\}}\abs{\nabla\psi}^{p-1}_g d\sigma_g.
\end{equation*}
Since, as noticed in Remark \ref{facts}, the integral on the right hand side of the above inequality is constant, the claim is proved.
\end{proof}

We are now going to state the conformal version of Theorem \ref{mainth}. Here, as well as in the rest of the paper, we are following Notation \ref{notation}, with $g$ in place of $g^{\R^n}$ and $\psi$ in place of $u$.
\begin{theorem}\label{mainconf}
Let $M$, $g$, and $\psi$ be defined as in \eqref{m}, \eqref{defmet} and \eqref{defpsi}, and let  $0 \leq q < \infty$. Let $\Psi_q^p: [0, \infty) \to \R$ be the function defined in \eqref{psi}. Then, for every $(p, q) \in \Lambda$, the function $\Psi_p^q$ is differentiable and the following assertions hold true.
\begin{enumerate}
\item[(i)]
For any $s \geq 0$ the derivative of $\Psi_q^p$ satisfies the following relation.
\begin{equation}
\label{derpsi2}
\begin{split}
\big(\Psi_q^p\big)'(s) 
&=-(q-1) \int\limits_{\{\psi = s\}} \abs{\nabla \psi}_g^{q(p-1) - 1} H_g d\sigma_g \\ 
&=-{\big(q-1\big)}e^{\frac{n-p}{(n-2)(p-1)}s}\bigintsss\limits_{\{{\psi\geq s}\}}\abs{\nabla\psi}_g^{q(p-1)-3}\Bigg[\left\vert\nabla^2_T \psi - \frac{\Delta^g_T \psi}{n-1}\,g_T\right\vert_{g_T}^2 \\
&\quad\qquad\!\!\!\!\!\!\!\!+ \Big(q(p-1)-1\Big)\bigabs{\nabla_T\abs{\nabla\psi}_g}_{g_T}^2 \\
&\quad\qquad\!\!\!\!\!\!\!\! +\big(p-1\big)^2 \left[q - 1 - \frac{(n-p)}{(p-1)(n-1)}\right]\bigg\langle\nabla\abs{\nabla\psi}_g,\frac{\nabla\psi}{\abs{\nabla\psi}_g}\bigg\rangle_g^2\Bigg]e^{-\frac{n-p}{(n-2)(p-1)}\psi} d\mu_g.
\end{split}
\end{equation}
In particular, $\left(\Psi_q^p\right)'(s) \leq 0$ for any $s \in [0, \infty)$.
\item[(ii)]
If $\big(\Psi_p^q\big)'(s_0)=0$ for some $(p, q)\in\Lambda$ and $s_0\geq 0$, the manifold $(\{\psi\geq s_0\}, g)$ is isometric to $\big([s_0, + \infty) \times \{\psi=s_0\}, d\rho\otimes d\rho + g_{|\{\psi=s_0\}}\big)$, where $\rho$ is the $g$-distance to $\{\psi=s_0\}$ and $\psi$ is an affine function of $\rho$. Moreover $\left(\{\psi =s_0\}, g_{\mid\{\psi = s_0\}}\right)$ is a constant curvature sphere.
\end{enumerate}
\end{theorem}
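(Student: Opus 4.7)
My plan is to tackle part (i) in two steps: first derive the boundary-integral formula for $(\Psi_q^p)'(s)$, and then rewrite it as a bulk integral whose integrand is a sum of non-negative terms. Part (ii) will then follow by reading off the rigidity from the vanishing of each summand.

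For the first equality in \eqref{derpsi2}, the cleanest route is to exploit the $p$-harmonicity of $\psi$ in $(M,g)$ to produce a closed divergence. A direct computation using $\Delta_{p;g}\psi=0$ (i.e.\ \eqref{eq_conf} transported to $\psi$) and the identity $\nabla^2\psi(\nabla\psi,\nabla\psi)=-H_g|\nabla\psi|_g^3/(p-1)$ from \eqref{meancurv2} gives
\[
\dive_g\bigl(|\nabla\psi|_g^{q(p-1)-1}\nabla\psi\bigr) \;=\; -(q-1)\,H_g\,|\nabla\psi|_g^{q(p-1)}.
\]
Integrating this on $\{\psi\geq s\}$ and using the divergence theorem, together with the decay of $|\nabla\psi|_g$ at infinity granted by Proposition \ref{mono_and} and \eqref{cambgrad} to kill the flux at infinity, gives the fundamental-theorem-of-calculus identity $\Psi_q^p(s)=\Psi_q^p(\infty)+(q-1)\int_{\{\psi\geq s\}}H_g|\nabla\psi|_g^{q(p-1)}d\mu_g$. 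Differentiating in $s$ via coarea yields the first equality.

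The heart of the proof is the second equality, where I must express $\int_{\{\psi=s\}}H_g|\nabla\psi|_g^{q(p-1)-1}d\sigma_g$, weighted by $e^{-\beta s}$ with $\beta=(n-p)/[(n-2)(p-1)]$, as a bulk integral with sum-of-squares integrand. I would seek a vector field of the form $Y=e^{-\beta\psi}\bigl[a\,|\nabla\psi|_g^{q(p-1)-3}\nabla^2\psi(\nabla\psi,\cdot)^\sharp+b\,|\nabla\psi|_g^{q(p-1)-1}H_g\,\nu_g\bigr]$ and compute $\dive_g Y$. The second-derivative terms that appear after expanding $\dive_gY$ are reorganized with the Bochner formula
\[
\tfrac{1}{2}\Delta_g|\nabla\psi|_g^2=|\nabla^2\psi|_g^2+\ric_g(\nabla\psi,\nabla\psi)+\langle\nabla\Delta_g\psi,\nabla\psi\rangle_g,
\]
and the Ricci term is then eliminated using the structural identity in \eqref{prob_ex_rif}; a short computation shows that this identity contracted with $\nabla\psi\otimes\nabla\psi$ gives $\ric_g(\nabla\psi,\nabla\psi)=\frac{n-p}{n-2}\nabla^2\psi(\nabla\psi,\nabla\psi)$, and the factor $\frac{n-p}{n-2}$ is precisely what produces the weight $e^{-\beta\psi}$ after integration by parts. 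Splitting $|\nabla^2\psi|_g^2$ into tangential, mixed and normal components with respect to the orthonormal frame of Notation \ref{notation}, and further splitting the tangential Hessian into its trace-free part and its trace, produces exactly the three summands in \eqref{derpsi2}. The threshold in the definition of $\Lambda$ is exactly what makes the coefficient of the third square non-negative.

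For the rigidity part (ii), vanishing of $(\Psi_q^p)'(s_0)$ forces each non-negative summand in the bulk integrand to vanish pointwise on $\{\psi\geq s_0\}$: the tangential Hessian of $\psi$ is a pure trace, $|\nabla\psi|_g$ is constant on each level set, and (since in $\Lambda$ the coefficient $q-1-(n-p)/[(p-1)(n-1)]$ is either positive, in which case the third square itself vanishes, or zero, in which case the other two squares alone force umbilicity) one deduces that $\nabla^2\psi=\lambda(\psi)\,g_T$ on the tangential distribution. Combined with $p$-harmonicity this forces $|\nabla\psi|_g$ to depend only on $\psi$, so that the flow of $\nabla\psi/|\nabla\psi|_g^2$ identifies $(\{\psi\geq s_0\},g)$ isometrically with a warped product over $\{\psi=s_0\}$; a second use of the structural equation, now along these level sets, pins the warping factor down to a constant and gives round sphere level sets. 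The main obstacle is the vector-field bookkeeping in step (b): arranging the single free parameter in $Y$ so that after Bochner and the structural substitution the three algebraically independent second-derivative invariants assemble into the exact three squares displayed in \eqref{derpsi2}, with the precise coefficient $q-1-(n-p)/[(p-1)(n-1)]$ on the normal term.
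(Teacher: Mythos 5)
Your overall strategy matches the paper's: prove the first equality via a divergence/coarea argument, then obtain the bulk sum--of--squares identity by integrating a weighted vector field whose divergence is controlled by a Bochner-type formula, and finally read rigidity off the vanishing of each summand. Two issues, however, are substantive.

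First, your justification of the flux computation at infinity is wrong as stated: by \eqref{cambgrad} and Proposition \ref{mono_and}, $|\nabla\psi|_g$ does \emph{not} decay---it tends to the positive constant $\frac{n-p}{p-1}C_p(\Omega)^{-1/(n-p)}$. The flux of the unweighted vector field $|\nabla\psi|_g^{q(p-1)-1}\nabla\psi$ across $\{\psi=S\}$ is exactly $\Psi_q^p(S)$, which does not vanish. Your final FTC identity is still correct because the limiting flux is absorbed into $\Psi_q^p(\infty)$, but the claimed mechanism is incorrect, and you should also note that the paper sidesteps convergence issues at infinity by first working on finite shells $\{s_0\le\psi\le s\}$ (Proposition \ref{deri}) and only later, via a Colding--Minicozzi type argument, establishing both the sign of $(\Psi_q^p)'$ and the existence of the limit $S\to\infty$ in the weighted identity.

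Second, and more seriously, your plan to invoke the classical Bochner formula $\tfrac12\Delta_g|\nabla\psi|_g^2=|\nabla^2\psi|_g^2+\ric_g(\nabla\psi,\nabla\psi)+\langle\nabla\Delta_g\psi,\nabla\psi\rangle_g$ leaves the term $\langle\nabla\Delta_g\psi,\nabla\psi\rangle_g$ unaddressed. For a $p$-harmonic $\psi$ one has $\Delta_g\psi=-(p-2)A_\psi\neq 0$ (see \eqref{espLap}), so this term carries genuine third derivatives of $\psi$ that do not cancel automatically. The paper's Lemma \ref{bochp} is not the classical Bochner formula but Valtorta's generalized Bochner formula for $|\nabla\psi|^p$, where the problematic drift appears through $\langle\nabla\Delta_{p;g}\psi,\nabla\psi\rangle_g$---which \emph{does} vanish for a $p$-harmonic function. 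Your computation of $\ric_g(\nabla\psi,\nabla\psi)=\frac{n-p}{n-2}\nabla^2\psi(\nabla\psi,\nabla\psi)$ from \eqref{prob_ex_rif} is correct and is indeed what produces the exponential weight $e^{-\beta\psi}$, but that alone does not dispose of the third-derivative terms; you would need to redo the cancellations Valtorta's formula encapsulates. Relatedly, your two-parameter ansatz for $Y$ adds a second term proportional to $H_g\nu_g$ whose divergence brings in $\nabla H_g$ (more third derivatives); the flux constraint you describe is consistent with the paper's choice $b=0$, $a=(p-1)(q-1)$, i.e.\ $Y=X e^{-\beta\psi}$ with $X$ the single-term field of Proposition \ref{bochgen}, and there is no indication the extra term helps. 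Finally, the algebraic step you call ``splitting $|\nabla^2\psi|_g^2$ into tangential, mixed and normal components and then trace-free parts'' is exactly the content of Proposition \ref{ineq}, and your rigidity sketch---including the dichotomy according to whether $q-1-(n-p)/[(p-1)(n-1)]$ is positive or zero---tracks the paper's conclusion of the proof, though the paper settles the borderline case by showing $H_g\equiv 0$ via \eqref{derif} rather than by ``a second use of the structural equation along the level sets.''
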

It is easy to check that the above result implies our Theorem \ref{mainth}:
\begin{proof}[Proof of Theorem \ref{mainth} after Theorem \ref{mainconf}]
Let $u$ be a solution to problem \eqref{prob_ex}. Let us consider the metric $g$ defined in \eqref{defmet} and the function $\psi$ defined in \eqref{defpsi}, so that the system \eqref{prob_ex_rif}
is satisfied in $M=\mathbb{R}^n\setminus \overline{\Omega}$. 
Straightforward computations involving \eqref{cambgrad} and \eqref{meas2} show the following relation between the functions $V_q^p$ and $\Psi_q^p$ 
\begin{equation}
\label{confV}
\begin{split}
V_q^p(t)&=\bigg(\frac{(n-p)}{(n-2)(p-1)}\bigg)^{q(p-1)}{C_p(\Omega)}^{\frac{(n-1)(q-1)}{(n-p)}}\Psi_q^p\Big(-\frac{(n-2)(p-1)}{(n-p)}\log t\Big) \\
-t\frac{d V_q^p}{dt} (t)&=\bigg(\frac{(n-p)}{(n-2)(p-1)}\bigg)^{\!\!\!q(p-1)-1}{\!\!C_p(\Omega)}^{\frac{(n-1)(q-1)}{(n-p)}}\frac{d\Psi_q^p}{dt}\Big(\!\!-\frac{(n-2)(p-1)}{(n-p)}\log t\Big),
\end{split}
\end{equation}
that, combined with the first identity in \eqref{derpsi2} and \eqref{meancurv1}, implies the first expression for the derivative of $V_q^p$ in \eqref{vqpder}. Getting the non-negative expression for the derivative of $V_q^p$ from \eqref{derpsi2} is just a matter of lengthy computations carried out by means of the various identities shown in Subsections 3.1 and 3.2. 
More precisely, one has to check that the quadratic quantities appearing in \eqref{derpsi2} are related to those of \eqref{vqpder} as follows.
\begin{equation}
\label{quadrato1}
\begin{split}
{\left\vert\nabla^2_T \psi - \frac{\Delta^g_T \psi}{n-1}\,g_T\right\vert_{g_T}^2} &= e^{\frac{4\psi}{n-2}}\left\vert D^2_T \psi - \frac{\Delta_T \psi}{n-1}\,g^{\R^n}_T\right\vert_{g^{\R^n}_T}^2 \\ 
&= \left[\frac{(p-1)(n-2)}{(n-p)}\right]^2 u^{-2\frac{n + p -2}{n-p}}\left\vert D^2_T u - \frac{\Delta_T u}{n-1}\,g^{\R^n}_T\right\vert_{g^{\R^n}_T}^2,
\end{split}
\end{equation}

\begin{equation}
\label{quadrato2}
\left\vert \nabla_T \abs{\nabla \psi}_{g}\right\vert_{g_T}^2 = e^{\frac{4\psi}{n-2}} \left\vert D_T \abs{D \psi}_{g^{\R^n}}\right\vert_{g^{\R^n}_T}^2 = \left[\frac{(p-1)(n-2)}{(n-p)}\right]^2 u^{-2\frac{n + p -2}{n-p}} \left\vert D_T \abs{D u}_{g^{\R^n}}\right\vert_{g^{\R^n}_T}^2,
\end{equation}
and
\begin{equation}
\label{quadrato3}
\begin{split}
\left\langle\nabla\abs{\nabla\psi}_g,\frac{\nabla\psi}{\abs{\nabla\psi}_g}\right\rangle_g^2 &= \left[\frac{H_g \abs{\nabla\psi}_g}{p-1}\right]^2 \\ 
&= \left[\frac{n-2}{n-p}\right]^2  u^{-2\frac{n + p -2}{n-p}}\abs{Du}^2\left[H- \frac{(n-1)(p-1)}{(n-p)} \frac{\abs{Du}}{u}\right]^2.
\end{split}
\end{equation}
 
We now turn to prove the rigidity statement of Theorem \ref{mainth}. Assume that $(V_q^p)'(t_0)=0$ for some $t_0\in (0, 1]$ then \eqref{confV} implies that $(\Psi_q^p)'(s_0)=0$ for the corresponding $s_0\in [0, +\infty)$ and then the rigidity part of Theorem \ref{mainconf} implies that $\{\psi\geq s_0\}$ is isometric to one half round cylinder with totally geodesic boundary. In particular, $\left(\{u = t_0\}, g_{\mid \{u =t_0\}}\right)$ is a constant curvature sphere. Since the conformal factor relating $g$ and $g_{\R^n}$ is a function of $u$, these two metrics coincide up to a multiplicative constant on the level sets of $u$, and thus, up to a translation $\{u=t_0\}=\partial B(0, R_0)$ for some $R_0>0$. Let now $v:\R^n\setminus B(0, R_0) \mapsto \R$ be defined by
\[
v(x)=t_0\bigg(\frac{R_0}{\abs{x}}\bigg)^{\frac{n-p}{p-1}}. 
\]
Then it is easy to see that $v$ solves  
\begin{align}\
\left\{\begin{array}{lll}
\Delta_p v=0 & \mbox{in} & \R^n\setminus \overline{B}_{R_0}\\
\quad \,\,v=t_0 & \mbox{on} & \partial B_{R_0}\\
\,v(x)\to 0 & \mbox{as} & |x|\to\infty.
\end{array}\right.
\end{align}
Since $u$ solves the same problem then by the uniqueness of solutions given in Theorem \ref{lewis} we get $u=v$ in $\R^n\setminus B_{R_0}$, and in turn $u$ is rotationally symmetric in this region. Finally, since $v$ can be extended to $\R^n\setminus\{0\}$, we have that $u$ and $v$ are both analytic function (recall Remark \ref{elliptic}) coinciding on an open subset of $M$, and thus they must coincide on the whole $\R^n\setminus\Omega$.
\end{proof}

Define now the function $\Psi_\infty^p: [0, + \infty) \to \R$ as
\[
\Psi_\infty^p (s) = \sup_{\{\psi=s\}} \abs{\nabla \psi}_g.
\]
The conformal version of \ref{mainthsup} reads as follows.

Again by \eqref{cambgrad} and \eqref{meancurv1}, and arguing as above for what it concerns the rigidity part, the conformal version of Theorem \ref{mainthsup} is easily seen to be the following.

\begin{theorem}
\label{sharpbound}
Let $(M, g, \psi)$ be a solution to \eqref{prob_ex_rif}. Then, the following assertions hold true.
\begin{enumerate}
\item[i)]
The function $\Psi_\infty^p$ is monotone non-increasing.
Moreover, $\Psi_\infty^p(s)=\Psi_\infty^p(s_1)$ for some $s_1 >  s$ if and only if $\{\psi \geq s\}$ is isometric to one half round cylinder with totally geodesic boundary. 
\item[ii)]

Let $x_s \in \{\psi =s\}$ be a maximum point of $\abs{\nabla \psi}_g$ on $\{\psi=s\}$. Then
\begin{equation}
\label{hgineq}
H_g(x_s) \geq 0,
\end{equation}
with equality occurring if and only if $\{\psi\geq s\}$ is isometric to one half round cylinder with totally geodesic boundary.
\end{enumerate}

\end{theorem}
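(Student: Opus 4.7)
The plan is to establish a maximum principle for $w := |\nabla \psi|_g^2$ on the superlevel sets $\{\psi \geq s\}$, and then extract the inequality $H_g \geq 0$ from a boundary-point analysis at the maximum.

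The first step is to derive a differential inequality for $w$. The Bochner identity in the conformal metric $g$ reads
\[
\tfrac{1}{2}\Delta_g w \;=\; |\nabla^2\psi|_g^2 \,+\, \ric_g(\nabla\psi,\nabla\psi) \,+\, \langle \nabla \psi, \nabla \Delta_g\psi\rangle_g.
\]
Evaluating the conformal Ricci identity \eqref{cambiocon5} on the pair $(\nabla\psi,\nabla\psi)$ yields $\ric_g(\nabla\psi,\nabla\psi) = \tfrac{n-p}{n-2}\nabla^2\psi(\nabla\psi,\nabla\psi)$, while the $p$-harmonicity of $\psi$ in $(M,g)$ gives $\Delta_g\psi = -(p-2)\nabla^2\psi(\nabla\psi,\nabla\psi)/w$. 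Substituting these into Bochner and invoking a Kato-type refinement of $|\nabla^2\psi|_g^2$ obtained from the tangential/normal decomposition of $\nabla^2\psi$ relative to the level sets of $\psi$, one arrives at a linear second-order differential inequality $\mathcal{L}\, w \geq 0$ on $M$, with $\mathcal{L}$ an operator whose principal part coincides with the linearization of $\Delta_{p,g}$ at $\psi$. Since $|\nabla\psi|_g > 0$ everywhere by Theorem \ref{lewis}, $\mathcal{L}$ is locally uniformly elliptic.

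With $\mathcal{L}w \geq 0$ in place, the weak maximum principle on $\{\psi \geq s\}$ applies. The asymptotic value $(n-2)\,C_p(\Omega)^{-1/(n-p)}$ of $|\nabla\psi|_g$ at infinity, obtained by combining Proposition \ref{mono_and} with \eqref{cambgrad}, permits barrier constructions that prevent the supremum of $w$ from being attained only asymptotically, so $\sup_{\{\psi \geq s\}} w = \sup_{\{\psi = s\}} w = \Psi_\infty^p(s)^2$. The monotonicity in (i) is then immediate from $\{\psi \geq s_1\} \subset \{\psi \geq s_2\}$ for $s_1 > s_2$. For (ii), the computation
\[
\nu_g(w) \;=\; 2\,\nabla^2\psi(\nu_g,\nabla\psi) \;=\; -\,\tfrac{2\,H_g\, w}{p-1}
\]
(the second equality from \eqref{meancurv2}), together with the fact that $x_s$ is also a maximum of $w$ on the whole of $\{\psi \geq s\}$ by the previous step, forces the inward derivative $\nu_g(w)(x_s)$ to be non-positive, whence $H_g(x_s) \geq 0$. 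The rigidity in both parts is produced by the Hopf boundary point lemma and the strong maximum principle for $\mathcal{L}$: equality forces $w \equiv \mathrm{const}$ on $\{\psi \geq s\}$; plugging $\nabla w \equiv 0$ and $\Delta_g w \equiv 0$ back into the Bochner formula collapses it to $|\nabla^2\psi|_g^2 \equiv 0$, whence $\nabla^2\psi \equiv 0$. Fermi coordinates along $\nu_g$ then realize $\{\psi\geq s\}$ as a product metric with totally geodesic level sets, and \eqref{cambiocon5} simplifies to an Einstein condition with positive constant on each level, so that the cross-section is a constant curvature sphere by the standard Obata-type rigidity.

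The main obstacle will be Step 1: the cross term $(p-2)\langle\nabla\psi,\nabla Q\rangle_g$ produced by the Bochner formula, where $Q := \nabla^2\psi(\nabla\psi,\nabla\psi)/|\nabla\psi|_g^2$, has indefinite sign and must be absorbed against $|\nabla^2\psi|_g^2$ by means of a sharp Kato-type inequality tailored to the $p$-harmonic setting. The conformal Ricci structure \eqref{cambiocon5} is precisely calibrated to supply the coefficients needed for this absorption, but verifying the identity demands careful tensorial bookkeeping, especially in the degenerate-elliptic regime $p \neq 2$.
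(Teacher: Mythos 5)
Your overall strategy—derive a Bochner-type elliptic differential inequality for a power of $\abs{\nabla\psi}_g$, run the weak maximum principle on superlevel sets with an explicit barrier at infinity, and extract $H_g(x_s)\geq 0$ from the sign of the normal derivative at the boundary maximum, then get rigidity from Hopf's lemma and the strong maximum principle—is indeed the paper's approach. Your computation $\nu_g(w)=2\nabla^2\psi(\nu_g,\nabla\psi)=-2H_g w/(p-1)$ matches \eqref{meancurv2}, the evaluation $\ric_g(\nabla\psi,\nabla\psi)=\frac{n-p}{n-2}\nabla^2\psi(\nabla\psi,\nabla\psi)$ from \eqref{cambiocon5} is correct, and the rigidity endgame (equality $\Rightarrow$ $\nabla^2\psi\equiv 0$, product splitting, round cross-section) is the right one. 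However, you have correctly identified the crux—the sub-solution property—and that is exactly where the proposal has a genuine gap rather than just a bookkeeping chore.

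The issue is the choice of $w=\abs{\nabla\psi}_g^2$. The paper works instead with $\abs{\nabla\psi}_g^p$ and uses the $p$-Bochner identity of Valtorta (Lemma \ref{bochp}, formula \eqref{pBoch2}), for which the third-order cross terms coming from $\langle\nabla\psi,\nabla\Delta_g\psi\rangle_g$ are already packaged so that
\[
\mathcal{L}\big(\abs{\nabla\psi}^p\big)=p\abs{\nabla\psi}^{p-2}\Big(\abs{\nabla^2\psi}^2+p(p-2)\big\langle\nabla\abs{\nabla\psi},\tfrac{\nabla\psi}{\abs{\nabla\psi}}\big\rangle^2\Big)\geq 0
\]
follows from plain Kato plus the tangential/normal split, with $\mathcal{L}f=\Delta f+(p-2)\nabla^2f(\nu_g,\nu_g)-\frac{n-p}{n-2}\langle\nabla f,\nabla\psi\rangle$. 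If you try to transfer this to $w=v^{2/p}$ with $v=\abs{\nabla\psi}^p$, you get
\[
\mathcal{L}w \;=\; \tfrac{2}{p}\bigl(\tfrac{2}{p}-1\bigr)v^{\frac{2}{p}-2}\Bigl[\abs{\nabla v}^2+(p-2)\langle\nabla v,\nu_g\rangle^2\Bigr]\;+\;\tfrac{2}{p}v^{\frac{2}{p}-1}\,\mathcal{L}v .
\]
The bracket equals $\abs{\nabla_T v}^2+(p-1)\langle\nabla v,\nu_g\rangle^2\geq 0$, so for $p>2$ the coefficient $\frac{2}{p}(\frac{2}{p}-1)$ is negative and $\mathcal{L}w$ is \emph{not} sign-definite. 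Thus the differential inequality you are betting on does not hold for $w=\abs{\nabla\psi}_g^2$ in the full range $1<p<n$, and no "Kato-type refinement" can rescue it without changing the quantity. You should replace $w$ by $\abs{\nabla\psi}_g^p$ from the start—this is precisely the $P$-function normalization adapted to the $p$-Laplacian.

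A second, related gap: the barrier argument is left implicit. The paper's proof hinges on the exact $\mathcal{L}$-harmonicity $\mathcal{L}\bigl(e^{\frac{n-p}{(n-2)(p-1)}\psi}\bigr)=0$ (Lemma \ref{elliptic2}(iii)), which makes $\sup_{\{\psi=s\}}\abs{\nabla\psi}^p+C^p e^{\frac{n-p}{(n-2)(p-1)}(\psi-S)}$ a legitimate comparison function on $\{s\leq\psi\leq S\}$ and lets $S\to\infty$. Merely knowing the asymptotic value of $\abs{\nabla\psi}_g$ does not, by itself, rule out the supremum being approached at infinity; the first-order drift term $-\frac{n-p}{n-2}\langle\nabla f,\nabla\psi\rangle$ in $\mathcal{L}$ is what makes the exponential a barrier, and you should verify that your proposed operator admits this barrier as an exact solution. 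Once both of these are fixed (use $\abs{\nabla\psi}^p$, verify the barrier), the rest of your outline—Hopf's lemma forcing constancy, $\nabla\abs{\nabla\psi}\equiv 0\Rightarrow\nabla^2\psi\equiv 0$ via the Bochner identity, and the splitting plus round cross-section—does recover the paper's proof.
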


Indeed, the equivalence between $\Psi_\infty^p$ and $V_\infty^p$ is clear from \eqref{cambgrad}, while the inequality \eqref{hgineq} is equivalent to \eqref{supvder} by \eqref{meancurv1}. Moreover,the rigidity statements in the above theorem imply those of Theorem \ref{mainthsup} as already proved above. 
\newline{}

In light of the above discussion, we devote the rest of the paper to prove Theorem \ref{mainconf} and Theorem \ref{sharpbound}.
\section{Proof of Theorem \ref{mainconf}}
We start by explicitly computing the derivative of $\Psi_q^p$. Since we are always going to deal only with the conformal setting introduced in the previous section, we omit the subscript $g$ in all the subsequent results.
\begin{proposition}
\label{deri}
Let $\Psi_q^p: [0,\infty)\to\mathbb{R}$ be defined as in \eqref{psi}. Then $\Psi_q^p$ is a differentiable function and its derivative satisfies, for all $s\geq 0$,  
\begin{equation}
\label{derif}
\begin{split}
\big(\Psi_q^p\big)'(s)
&=\int_{\{\psi=s\}}\left\langle \nabla\abs{\nabla\psi}^{(p-1)(q-1) }\abs{\nabla\psi}^{p-2}, \frac{\nabla \psi}{|\nabla\psi|}\right\rangle d\sigma \\
&=-(q-1)\int_{\{\psi=s\}}{\abs{\nabla\psi}}^{q(p-1)-1}Hd\sigma.
\end{split}
\end{equation}

\end{proposition}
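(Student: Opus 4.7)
The strategy is to rewrite $\Psi_q^p(s)$ as a flux integral through $\{\psi=s\}$ and then combine the divergence theorem with the coarea formula, using the $p$-harmonicity of $\psi$ to kill the term that would otherwise ruin the desired identity.

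First I would factor the integrand as
\[
\abs{\nabla\psi}^{q(p-1)} \,=\, \abs{\nabla\psi}^{(p-1)(q-1)}\,\abs{\nabla\psi}^{p-2}\,\bigg\langle\nabla\psi,\frac{\nabla\psi}{\abs{\nabla\psi}}\bigg\rangle,
\]
and set $F := \abs{\nabla\psi}^{(p-1)(q-1)}\abs{\nabla\psi}^{p-2}\nabla\psi$ and $\nu := \nabla\psi/\abs{\nabla\psi}$, so that $\Psi_q^p(s) = \int_{\{\psi=s\}}\langle F,\nu\rangle\,d\sigma$. By Theorem~\ref{lewis} together with \eqref{cambgrad}, the function $\psi$ has no critical points and each level set $\{\psi=t\}$ is a smooth compact hypersurface; hence for $0\leq s_1 < s_2$ the region $\{s_1\leq\psi\leq s_2\}$ is a smooth compact domain whose outward conormal is $\nu$ on $\{\psi=s_2\}$ and $-\nu$ on $\{\psi=s_1\}$. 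The divergence theorem therefore yields
\[
\Psi_q^p(s_2) - \Psi_q^p(s_1) \,=\, \int_{\{s_1\leq\psi\leq s_2\}} \dive(F)\,d\mu.
\]

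Second, I would expand
\[
\dive(F) \,=\, \big\langle\nabla\abs{\nabla\psi}^{(p-1)(q-1)},\,\abs{\nabla\psi}^{p-2}\nabla\psi\big\rangle \,+\, \abs{\nabla\psi}^{(p-1)(q-1)}\Delta_p\psi.
\]
The second term vanishes because $\psi$ is a constant multiple of $\log u$ and $\Delta_p(\log u)=0$ (Lemma proved in Subsection 3.1), so by scaling $\Delta_p\psi=0$. Applying the coarea formula to the surviving term and differentiating in $s_2$ at $s_2=s$ via the fundamental theorem of calculus produces the first equality in \eqref{derif}; the resulting integrand is continuous in $s$, so $\Psi_q^p$ is in fact $C^1$.

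Third, to get the second equality I would compute
\[
\nabla\abs{\nabla\psi}^{(p-1)(q-1)} \,=\, (p-1)(q-1)\,\abs{\nabla\psi}^{(p-1)(q-1)-2}\,\nabla^2\psi(\nabla\psi,\cdot),
\]
so that the integrand on the right-hand side of the first equality becomes $(p-1)(q-1)\abs{\nabla\psi}^{q(p-1)-4}\nabla^2\psi(\nabla\psi,\nabla\psi)$. Substituting the identity $\nabla^2\psi(\nabla\psi,\nabla\psi) = -H\abs{\nabla\psi}^3/(p-1)$ coming from \eqref{meancurv2} (which in turn rests on $\Delta_p\psi=0$) collapses this to $-(q-1)\abs{\nabla\psi}^{q(p-1)-1}H$, producing the second equality.

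The only potential obstacle is bookkeeping: picking the right factorization so that the divergence-theorem step benefits from the $p$-harmonicity, and matching the exponents through the chain of algebraic identities. All the required analytic ingredients (absence of critical points, smoothness and compactness of level sets, coarea, $p$-harmonicity of $\psi$) are already established in the preliminaries.
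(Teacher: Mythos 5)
Your argument is correct and coincides in structure with the paper's own proof: factor $\Psi_q^p$ as a flux through $\{\psi=s\}$, apply the divergence theorem on $\{s_1\leq\psi\leq s_2\}$ and kill the $\Delta_p\psi$ term, pass through the coarea formula and the fundamental theorem of calculus (with the continuity check you flag), and finally convert $\nabla^2\psi(\nabla\psi,\nabla\psi)$ to the mean curvature $H_g$ via \eqref{meancurv2}. The only cosmetic difference is that the paper verifies the continuity of the inner level-set integral via an explicit divergence-theorem estimate rather than by assertion, but this is a minor elaboration of the same idea.
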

\begin{proof}
Fix $s_0\in [0,\infty)$ and let $s_0 < s$. We write
\[
\begin{split}
\Psi_q^p(s) - \Psi_q^p(s_0)&=\int_{\{\psi=s\}}\left\langle\nabla\psi\abs{\nabla\psi}^{q(p-1)-1}, \frac{\nabla\psi}{\abs{\nabla\psi}}\right\rangle d\sigma \\ 
&- \int_{\{\psi=s_0\}}\left\langle\nabla\psi\abs{\nabla\psi}^{q(p-1) - 1}, \frac{\nabla\psi}{\abs{\nabla\psi}}\right\rangle d\sigma.
\end{split}
\]
Using the Divergence Theorem, we can write the above quantity as
\begin{equation}
\label{passaggio}
\Psi_q^p(s)-\Psi_q^p(s_0)=\int_{\{s_0\leq \psi\leq s\}}\dive\big(\nabla\psi\abs{\nabla\psi}^{p-2}\abs{\nabla\psi}^{(p-1)(q-1)}\big)d\mu.
\end{equation}
Since $\Delta_p \psi=0$, we get
\[
\dive(\nabla\psi\abs{\nabla\psi}^{p-2}\abs{\nabla\psi}^{(p-1)(q -1)})=\la\nabla\abs{\nabla\psi}^{(p-1)(q-1)}, \nabla\psi\ra\abs{\nabla\psi}^{p-2},
\]
by the coarea formula we obtain
\[
\Psi_q^p(s) - \Psi_q^p(s_0)=\int_{s_0}^s\int_{\{\psi=\tau\}}\big\langle\big(\nabla\abs{\nabla\psi}^{(p-1)(q-1)}\big) \abs{\nabla\psi}^{p-2}, \frac{\nabla\psi}{\abs{\nabla\psi}}\big\rangle d\sigma d\tau.
\]
The first equality in \eqref{derif} then follows by the 
Fundamental Theorem of Calculus provided that
the function $I$ mapping 
\begin{equation}
\label{fun}
 \tau \mapsto \int_{\{\psi=\tau\}}\big\langle\big(\nabla\abs{\nabla\psi}^{(p-1)(q-1)}\big) \abs{\nabla\psi}^{p-2}, \frac{\nabla\psi}{\abs{\nabla\psi}}\big\rangle d\sigma
\end{equation}
is continuous.
In fact, fixed $\tau_0 \geq 0$ we have for any $\tau > \tau_0$
\[
\begin{split}
\bigabs{ I(\tau) - I(\tau_0)}  
&\leq \int_{\{\tau_0 \leq \psi \leq \tau\}} \bigabs{\dive \left((\nabla\abs{\nabla\psi}^{(p-1)(q-1)}\big) \abs{\nabla\psi}^{p-2}\right)}\ d\mu  \\ 
&\leq C\mu({\{\tau_0 \leq \psi \leq \tau\})}
\end{split},
\]
with $C=\sup_{\{\tau_0 \leq \psi \leq \tau\}}\nabla\abs{\nabla\psi}^{(p-1)(q-1)} \abs{\nabla\psi}^{p-2}$. 
The continuity of the function defined in \eqref{fun} then follows from the continuity of $\psi$ and the fact that, thanks to \eqref{meas1}, $\mu$ is absolutely continuous with respect to the Lebesgue measure.
 
The second equality in \eqref{derif} follows by  \eqref{meancurv2}.
\end{proof}

Applying the generalized Bochner formula provided in \cite[Proposition 3.3]{Valt} to a solution of \eqref{prob_ex_rif} leaves with the following identity.


\begin{comment}
\begin{theorem}\label{Valto}
Let $M$ be an $n-$dimensional Riemannian manifold. Given $x\in M$, a domain $U$ containing $x$ and $v\in C^2(U)$, if $\nabla v\neq 0$ on $U$ we denote by
\[
A_v:=\frac{\nabla^2 v(\nabla v,\nabla v)}{|\nabla v|_g^2}.
\] 
Then,
\begin{align}
&\frac{1}{p}\left[|\nabla v|_g^{p-2} \Delta_g |\nabla v|_g^p + (p-2)|\nabla v|_g^{p-4}\nabla^2|\nabla v|_g^p(\nabla v, \nabla v)\right]=\\
\nonumber
&|\nabla v|_g^{2p-4}\left\{|\nabla v|_g^{2-p}\left[\left\langle \nabla\Delta_p v,\nabla v\right\rangle-(p-2)A_v\Delta_{g;p} v\right]+|\nabla^2 v|_g^2 +p(p-2)|A_v^2+Ric(\nabla v,\nabla v)\right\}.
\end{align}
We also have
\begin{align}
|\nabla v|_g^{2p-4}\left(|\nabla^2\psi|^2+p(p-2)A_v\right)\geq \frac{(\Delta_p v)^2}{n^{'}}+\frac{n^{'}}{n^{'}-1}\left(\frac{\Delta_p v}{n^{'}}-(p-1)|\nabla v|_g^{p-2}A_v\right)^2,
\end{align}
for all $n^{'}\geq n$.
\end{theorem}

Applying Theorem \ref{Valto} to a solution to \eqref{prob_ex_rif} we immediately get the following
\end{comment}


\begin{lemma}
\label{bochp}
Let $\psi$ be a solution to \eqref{prob_ex_rif}. Then,
\begin{align}\label{pBoch2}
\Delta|\nabla \psi|^p &+ (p-2) \frac{\nabla^2 |\nabla \psi|^p(\nabla \psi,\nabla \psi)}{|\nabla \psi|^2}-\frac{n-p}{n-2}\left\langle \nabla|\nabla \psi|^p,\nabla \psi\right\rangle=\\
\nonumber
&=p|\nabla \psi|^{p-2}\left(|\nabla^2 \psi|^2+{p(p-2)}\bigg\langle \nabla|\nabla \psi|,\frac{\nabla \psi}{\abs{\nabla\psi}}\bigg\rangle^2\right).
\end{align}
\end{lemma}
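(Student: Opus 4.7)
The strategy is to deduce \eqref{pBoch2} by combining two ingredients that are already available: the generalized Bochner-type identity for the $p$-Laplacian (as quoted in the paragraph preceding the statement, from \cite{Valt}), applied to the $p$-harmonic function $\psi$, and the explicit expression of $\ric_g$ provided by the second equation of \eqref{prob_ex_rif}.

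First I would specialize the generalized $p$-Bochner formula
\begin{equation*}
\Delta|\nabla v|^p + (p-2)\frac{\nabla^2|\nabla v|^p(\nabla v,\nabla v)}{|\nabla v|^2} = p|\nabla v|^{p-2}\Big(|\nabla^2 v|^2 + p(p-2)A_v^2 + \ric(\nabla v,\nabla v)\Big) + \mathcal R_p(v),
\end{equation*}
where $A_v := \nabla^2 v(\nabla v,\nabla v)/|\nabla v|^2$ and $\mathcal R_p(v)$ collects the terms proportional to $\Delta_p v$ and $\langle \nabla\Delta_p v,\nabla v\rangle$. Plugging in $v=\psi$ and using $\Delta_{p;g}\psi=0$ makes $\mathcal R_p(\psi)$ vanish identically, reducing the identity to
\begin{equation*}
\Delta|\nabla\psi|^p + (p-2)\frac{\nabla^2|\nabla\psi|^p(\nabla\psi,\nabla\psi)}{|\nabla\psi|^2} = p|\nabla\psi|^{p-2}\Big(|\nabla^2\psi|^2 + p(p-2)A_\psi^2 + \ric_g(\nabla\psi,\nabla\psi)\Big).
\end{equation*}

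Next I would evaluate the Ricci-type tensor identity in \eqref{prob_ex_rif} on the pair $(\nabla\psi,\nabla\psi)$. Since $(d\psi\otimes d\psi)(\nabla\psi,\nabla\psi)=|\nabla\psi|^4$ and $g(\nabla\psi,\nabla\psi)=|\nabla\psi|^2$, the two $|\nabla\psi|^4/(n-2)$ terms on the two sides cancel and what remains simplifies to
\begin{equation*}
\ric_g(\nabla\psi,\nabla\psi) = \frac{n-p}{n-2}\,\nabla^2\psi(\nabla\psi,\nabla\psi).
\end{equation*}

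Finally, I would convert this Ricci contribution into the transport-type term appearing in \eqref{pBoch2} via the elementary identity
\begin{equation*}
\langle \nabla|\nabla\psi|^p,\nabla\psi\rangle = p|\nabla\psi|^{p-2}\,\nabla^2\psi(\nabla\psi,\nabla\psi),
\end{equation*}
which follows from $\langle \nabla|\nabla\psi|,X\rangle = \nabla^2\psi(\nabla\psi,X)/|\nabla\psi|$. Thus $p|\nabla\psi|^{p-2}\ric_g(\nabla\psi,\nabla\psi) = \tfrac{n-p}{n-2}\langle\nabla|\nabla\psi|^p,\nabla\psi\rangle$, and moving this term to the left-hand side yields the announced formula. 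Recognizing also that $A_\psi = \langle\nabla|\nabla\psi|,\nabla\psi/|\nabla\psi|\rangle$, hence $A_\psi^2 = \langle\nabla|\nabla\psi|,\nabla\psi/|\nabla\psi|\rangle^2$, identifies the $p(p-2)$-term on the right-hand side. There is no conceptual obstacle: the only care required is to track the cancellation of the $|\nabla\psi|^4/(n-2)$ terms when extracting $\ric_g(\nabla\psi,\nabla\psi)$, so that the factor $(n-p)/(n-2)$ emerges cleanly.
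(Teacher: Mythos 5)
Your proposal is correct and follows exactly the route the paper takes: specialize the generalized $p$-Bochner identity from \cite[Proposition 3.3]{Valt} to the $p$-harmonic $\psi$ (killing the $\Delta_{p}$-terms), contract the Ricci equation in \eqref{prob_ex_rif} with $(\nabla\psi,\nabla\psi)$ to obtain $\ric_g(\nabla\psi,\nabla\psi)=\tfrac{n-p}{n-2}\nabla^2\psi(\nabla\psi,\nabla\psi)$, and rewrite this via $\langle\nabla|\nabla\psi|^p,\nabla\psi\rangle=p|\nabla\psi|^{p-2}\nabla^2\psi(\nabla\psi,\nabla\psi)$. The paper states this only as a one-line remark preceding the lemma; your write-up fills in the same computation.
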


We now use \eqref{pBoch2} to relate the divergence of the vector field
\[
X:=\nabla\abs{\nabla\psi}^{(p-1)(q-1)}\abs{\nabla\psi}^{p-2}
\]
to the derivative of $\Psi_q^p$.

\begin{proposition}
\label{bochgen}
Let $\psi$ be a solution to \eqref{prob_ex_rif}. The following identity holds:
\begin{equation}
\label{bochgenf}
\begin{split}
\dive\big(X\big)- \frac{n-p}{n-2}\langle X, \nabla\psi\rangle=&\,\,\Big((p-1)(q-1)\Big)\abs{\nabla\psi}^{(p-1)q-3} \\     
&\!\!\!\!\times \Bigg[\abs{\nabla^2\psi}^2 + \Big(q(p-1)-p-1\Big)\bigabs{\nabla\abs{\nabla\psi}}^2 \\
&\!\!\!\!+\big(p-2\big)\bigg(\Big(\frac{\langle\nabla\abs{\nabla\psi}, \nabla\psi\rangle}{\abs{\nabla\psi}}\Big)^2-\frac{\nabla^2\abs{\nabla\psi}(\nabla\psi, \nabla\psi)}{\abs{\nabla\psi}}\bigg)\Bigg].
\end{split}
\end{equation}
\end{proposition}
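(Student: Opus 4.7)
The strategy is to reduce the computation of the left-hand side of \eqref{bochgenf} to an application of the generalized Bochner formula \eqref{pBoch2} from Lemma \ref{bochp}. To this end, set $\alpha := (p-1)(q-1)$, and note that since
\[
\nabla |\nabla\psi|^{\alpha} \;=\; \alpha\,|\nabla\psi|^{\alpha-1}\nabla|\nabla\psi|
\qquad\text{and}\qquad
\nabla |\nabla\psi|^p \;=\; p\,|\nabla\psi|^{p-1}\nabla|\nabla\psi|,
\]
the vector field $X = |\nabla\psi|^{p-2}\nabla|\nabla\psi|^{\alpha}$ admits the useful representation
\[
X \;=\; \frac{\alpha}{p}\,|\nabla\psi|^{\alpha-2}\,\nabla|\nabla\psi|^p.
\]
This rewriting is the key point: it turns the left-hand side of \eqref{bochgenf} into a combination of exactly the quantities appearing in \eqref{pBoch2}.

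Next, I would apply Leibniz rule to compute $\dive X$, obtaining
\[
\dive X \;=\; \alpha(\alpha-2)\,|\nabla\psi|^{\alpha+p-4}\bigl|\nabla|\nabla\psi|\bigr|^{2} \;+\; \frac{\alpha}{p}\,|\nabla\psi|^{\alpha-2}\,\Delta|\nabla\psi|^p,
\]
after using $\langle \nabla|\nabla\psi|, \nabla|\nabla\psi|^p\rangle = p\,|\nabla\psi|^{p-1}\,|\nabla|\nabla\psi||^2$. Combining with $\langle X,\nabla\psi\rangle = \tfrac{\alpha}{p}|\nabla\psi|^{\alpha-2}\langle \nabla|\nabla\psi|^p,\nabla\psi\rangle$, the whole left-hand side of \eqref{bochgenf} becomes
\[
\alpha(\alpha-2)\,|\nabla\psi|^{\alpha+p-4}\bigl|\nabla|\nabla\psi|\bigr|^2 \;+\; \frac{\alpha}{p}\,|\nabla\psi|^{\alpha-2}\!\left[\Delta|\nabla\psi|^p - \tfrac{n-p}{n-2}\langle \nabla|\nabla\psi|^p,\nabla\psi\rangle\right].
\]
At this point Lemma \ref{bochp} directly computes the bracketed expression, trading it for $p|\nabla\psi|^{p-2}(|\nabla^2\psi|^2 + p(p-2)\langle\nabla|\nabla\psi|,\nabla\psi/|\nabla\psi|\rangle^2)$ together with the correction term $-(p-2)\nabla^2|\nabla\psi|^p(\nabla\psi,\nabla\psi)/|\nabla\psi|^2$ brought to the other side.

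The last step is a bookkeeping expansion: differentiate $|\nabla\psi|^p = (|\nabla\psi|)^p$ twice to get
\[
\frac{\nabla^2|\nabla\psi|^p(\nabla\psi,\nabla\psi)}{|\nabla\psi|^2} \;=\; p(p-1)\,|\nabla\psi|^{p-2}\!\left(\frac{\langle\nabla|\nabla\psi|,\nabla\psi\rangle}{|\nabla\psi|}\right)^{\!2} \!+\; p\,|\nabla\psi|^{p-3}\,\nabla^2|\nabla\psi|(\nabla\psi,\nabla\psi).
\]
Substituting this into the Bochner identity and collecting powers of $|\nabla\psi|$, the coefficient of $\langle\nabla|\nabla\psi|,\nabla\psi/|\nabla\psi|\rangle^2$ becomes $p^2(p-2)-p(p-2)(p-1)=p(p-2)$, so after factoring out $\frac{\alpha}{p}|\nabla\psi|^{\alpha-2}\cdot p|\nabla\psi|^{p-2} = \alpha|\nabla\psi|^{\alpha+p-4}$ and using $\alpha+p-4 = (p-1)q-3$ together with $\alpha-2 = q(p-1)-p-1$, the expression collapses to the right-hand side of \eqref{bochgenf}.

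The only real obstacle is keeping the algebra straight: several terms of the form $|\nabla\psi|^{\alpha+p-4}$ with different scalar coefficients must be combined, and one must carefully verify that the coefficient of $|\nabla|\nabla\psi||^2$ coming from $\dive X$ recombines with what Bochner produces to yield precisely the factor $q(p-1)-p-1$. No further analytic input is needed — the identity is entirely pointwise.
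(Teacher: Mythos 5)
Your proposal is correct and follows essentially the same route as the paper's own proof: rewrite $X$ as $\frac{(p-1)(q-1)}{p}\abs{\nabla\psi}^{q(p-1)-p-1}\nabla\abs{\nabla\psi}^p$ (the paper's \eqref{id1}), expand $\dive X$ via Leibniz, substitute the Bochner-type identity \eqref{pBoch2}, and finally unpack $\nabla^2\abs{\nabla\psi}^p(\nabla\psi,\nabla\psi)$ using \eqref{formula2} to collect the $(p-2)$-term. The intermediate expressions you write — including the coefficient computation $p^2(p-2)-p(p-1)(p-2)=p(p-2)$ and the exponent bookkeeping $\alpha+p-4=q(p-1)-3$, $\alpha-2=q(p-1)-p-1$ — all check out and match the paper's \eqref{contazzo1}–\eqref{contazzo2}.
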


\begin{proof}
Simple computations give:
\begin{align}\label{formula1}
\nabla\abs{\nabla\psi}^p&=p\abs{\nabla\psi}^{p-1}\nabla\abs{\nabla\psi},\\
\label{formula2}
\nabla^2\abs{\nabla\psi}^p(\nabla\psi,\nabla\psi)&=p(p-1)\abs{\nabla\psi}^{p-2}\langle\nabla\abs{\nabla\psi},\nabla\psi\rangle^2 + p\abs{\nabla\psi}^{p-1}\nabla^2\abs{\nabla\psi}(\nabla\psi, \nabla\psi).
\end{align}
and 
\begin{equation}
\label{id1}
X=\frac{(p-1)(q-1)}{p}\abs{\nabla\psi}^{q(p-1)-p-1}\nabla\abs{\nabla\psi}^p.
\end{equation}
Using \eqref{formula1} and \eqref{id1} we have
\begin{equation}
\label{contazzo1}
\begin{split}
\dive(X)&=\frac{(p-1)(q-1)}{p}\abs{\nabla\psi}^{q(p-1)-p-1}\Delta\abs{\nabla\psi}^p \\
&\quad+\big((p-1)(q-1)\big)\big(q(p-1)-p-1\big)\abs{\nabla\psi}^{q(p-1)-3}\bigabs{\nabla\abs{\nabla\psi}}^2.
\end{split}
\end{equation}
Plugging \eqref{pBoch2} into the above relation, and using identity \eqref{id1}, we obtain
\begin{equation}
\label{contazzo2}
\begin{split}
\dive(X)-\frac{n-p}{n-2}\langle X, \nabla\psi\rangle&=\Big((p-1)(q-1)\Big)\abs{\nabla\psi}^{q(p-1)-3}\\
&\,\,\,\,\times\bigg[\abs{\nabla^2\psi}^2+\Big(q(p-1)-p-1\Big)\bigabs{\nabla\abs{\nabla\psi}}^2 \\
&\quad\,\,-\frac{(p-2)}{p}\frac{\nabla^2\abs{\nabla\psi}^p(\nabla\psi,\nabla\psi)}{\abs{\nabla\psi}^p} \\ 
&\quad\,\,+ {p(p-2)}{\left\langle\nabla\abs{\nabla\psi},\frac{\nabla\psi}{\abs{\nabla \psi}}\right\rangle}^2\bigg].
\end{split}
\end{equation}
The conclusion follows using \eqref{formula2}.
\end{proof}
We are now going to prove a refinement for $p$-harmonic functions of a relation appeared for the first time in \cite{sternberg} and proved in a Riemannian setting e.g. in \cite[Proposition 18]{farina}. It will play a key  role in proving that $(\Psi_q^p)'$ has a sign and in characterizing the associated rigidity. The tangential elements appearing below are to be understood as in Notation \ref{notation}, with obvious changes.

\begin{proposition}
\label{ineq}
Let $(M, g)$ be a complete non-compact Riemannian manifold, and let $f\in C^2(M)$ be $p$-harmonic, for $p>1$.
\begin{enumerate}
\item[(i)]At any point $x\in M$ such that $\abs{\nabla f}(x)>0$ there holds
\begin{equation}
\label{ineqf}
\begin{split}
\abs{\nabla^2 f}^2 - \bigg(1 + \frac{(p-1)^2}{n-1}\bigg) \bigabs{\nabla\abs{\nabla f}}^2  = &\,\,\,\,\bigabs{\nabla^2_T f - \frac{\Delta_T f}{n-1}\,g_T}^2 \\  
& \!\!+\bigg(1 - \frac{(p-1)^2}{n-1}\bigg) \bigabs{\nabla_T\abs{\nabla f}}^2, 
\end{split}
\end{equation}

\item[(ii)] If for some $\rho_0 \in \R$  there hold $\abs{\nabla f}(x)> 0$  and
\begin{align}
\label{condrig1}
\bigabs{\nabla^2_T f - \frac{\Delta_T f}{n-1}\,g_T}^2(x)  = 0 \\ 
\label{condrig2}
\bigabs{\nabla_T\abs{\nabla f}}^2(x) =0
\end{align}
for any $x$ in $\{f  \geq \rho_0\}$,
then the Riemannian manifold $(\{f \geq \rho_0\}, g)$ is isometric to the warped product $([\rho_0, + \infty) \times \{f = \rho_0\}, d\rho \otimes d\rho + \eta^2 (\rho)\, g_{\mid \{f = \rho_0\}})$, where $\eta$ and $f$ are related as 
\begin{equation}
\label{eta}
f(x) = \rho_0 + c \int_{\rho_0}^{\rho(x)} \frac{d \tau}{\eta^{\frac{n-1}{p-1}}},
\end{equation}
for some constant $c > 0$.
\end{enumerate}
\end{proposition}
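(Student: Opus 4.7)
For part (i), the plan is to work at $x$ with $|\nabla f|(x) > 0$ in an orthonormal frame $\{e_1, \ldots, e_{n-1}, e_n = \nabla f/|\nabla f|\}$ and reduce the claim to a pointwise Pythagorean identity. Two basic facts drive everything: differentiating $\langle \nabla f, \nabla f\rangle = |\nabla f|^2$ along $e_i$ produces $\nabla^2 f(e_n, e_i) = e_i(|\nabla f|)$ for every $i$, so the mixed and $(n,n)$ entries of the Hessian encode precisely the components of $\nabla|\nabla f|$; and dividing the $p$-harmonic equation by $|\nabla f|^{p-2}$ and isolating the tangential trace yields $\Delta_T f = -(p-1)\,e_n(|\nabla f|)$, hence $(e_n|\nabla f|)^2 = (\Delta_T f)^2/(p-1)^2$. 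I would then expand $|\nabla^2 f|^2$ as the squared tangential block plus twice the squared mixed row plus the squared $(n,n)$-entry, write the tangential block as its traceless part plus its trace part, and substitute via the $p$-harmonic relation: the normal-derivative contributions collect with exactly coefficient $1 + (p-1)^2/(n-1)$ in front of $|\nabla|\nabla f||^2 = |\nabla_T|\nabla f||^2 + (e_n|\nabla f|)^2$, so moving that term to the left-hand side cancels them and identity \eqref{ineqf} drops out.

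For part (ii), the hypotheses have a direct geometric content: $\nabla_T|\nabla f|\equiv 0$ forces $|\nabla f|$ to be constant on each level set of $f$ and hence a function of $f$ alone, while the vanishing of the traceless tangential Hessian gives $\nabla^2_T f = \Delta_T f/(n-1)\,g_T$, i.e.\ every level set $\{f = \tau\}$ with $\tau \geq \rho_0$ is totally umbilical with respect to $\nu = \nabla f/|\nabla f|$. I would then set $\rho(x) := \mathrm{dist}_g(x, \{f = \rho_0\})$; because $|\nabla f|$ depends only on $f$, the integral curves of $\nu$ are unit-speed geodesics along which $\rho$ is affine, so $\rho$ is smooth on $\{f \geq \rho_0\}$ with level sets coinciding with those of $f$, and $f = F(\rho)$ for a smooth one-variable $F$. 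In the normal coordinates $(\rho, y)$ based on $\Sigma_0 := \{f = \rho_0\}$ the metric writes $g = d\rho \otimes d\rho + g_\rho$, and the umbilicity equation $\partial_\rho g_\rho = \lambda(\rho)\,g_\rho$ integrates to $g_\rho = \eta(\rho)^2\, g_{\mid \Sigma_0}$, producing the warped-product structure. Finally, rewriting the $p$-harmonic equation with $\nabla f = F'(\rho)\,\partial_\rho$ reduces it to $(d/d\rho)\bigl(\eta^{n-1}(F')^{p-1}\bigr) = 0$, so $\eta^{n-1}(F')^{p-1} \equiv c^{p-1}$ for some $c > 0$, and integrating from $\rho_0$ yields \eqref{eta}.

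The main obstacle I anticipate is the passage from the pointwise umbilicity and constancy conditions to the global warped-product isometry: one must verify that the normal exponential map from $\Sigma_0$ is a diffeomorphism onto $\{f \geq \rho_0\}$. Completeness of $(M,g)$ together with $|\nabla f| > 0$ on $\{f \geq \rho_0\}$ gives forward completeness of the $\nu$-flow, and since $|\nabla f|$ is a strictly positive function of $f$ alone, each integral curve hits every level set $\{f = \tau\}$, $\tau \geq \rho_0$, exactly once. This is the only non-algebraic input in the argument, and should follow from standard foliation arguments once the umbilicity has been established; the rest of (ii) is then a short ODE computation that already appears, in essentially this form, in the $p=2$ treatment of \cite{AgoMaz2}.
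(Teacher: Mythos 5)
Your plan for part (i) is exactly the paper's argument: orthonormal frame $\{e_1,\dots,e_{n-1},e_n=\nabla f/|\nabla f|\}$, the splitting $|\nabla^2 f|^2 = |\nabla^2_T f|^2 + 2\sum_j |\nabla^2 f(e_n,e_j)|^2 + |\nabla^2 f(e_n,e_n)|^2$, the trace--traceless decomposition of the tangential block, the identity $\nabla^2 f(e_n,e_j)=e_j(|\nabla f|)$, and the $p$-harmonic relation $\Delta_T f = -(p-1)\,e_n(|\nabla f|)$. For part (ii) your route is a mild repackaging of the paper's: where you read off total umbilicity of the level sets from \eqref{condrig1}, integrate $\partial_\rho g_\rho = \lambda(\rho)g_\rho$, and then obtain $\eta^{n-1}(F')^{p-1}=\mathrm{const}$ directly from the $p$-Laplace equation in warped coordinates, the paper instead writes the Hessian of $f$ twice in the coordinates $(\rho,x)$ (once from $\nabla^2 f = f''\,d\rho\otimes d\rho + \tfrac12 f'\partial_\rho g_{ij}\,dx^i\otimes dx^j$, once from \eqref{condrig1}--\eqref{condrig2} combined with \eqref{scomdeltat}) and compares the two to obtain the ODE $\partial_\rho\log g_{ij} = -\tfrac{2(p-1)}{n-1}\partial_\rho\log f'$; this is the same relation you find by integrating the $p$-Laplacian, just derived without writing the equation in divergence form. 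Your statement that the nontrivial input is the diffeomorphism $\{f\geq\rho_0\}\simeq[\rho_0,\infty)\times\{f=\rho_0\}$ is correct; the paper discharges it by citing a standard gradient-flow result and then takes $\rho$ with $d\rho = df/|\nabla f|$ rather than as a distance, which avoids having to argue separately that the normal geodesics are minimizing. One small point in your favor: you apply the two rigidity hypotheses to the right quantities, whereas the paper has the references to \eqref{condrig1} and \eqref{condrig2} swapped in its proof text.
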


\begin{proof}
We consider on $x$ an orthonormal frame $\{e_1, \dots, \, e_{n-1}, \, e_n = \nabla f/ \abs{\nabla f}\}$. We can write the norm of $\nabla^2 f$ as follows
\begin{equation}
\label{scomphess}
\abs{\nabla^2 f}^2=\abs{\nabla^2_T f}^2 + 2\sum_{j}^{n-1}\abs{\nabla^2 f(e_n, e_j)}^2 + \abs{\nabla^2 f (e_n, e_n)}^2.
\end{equation}
Since, for a generic $2$-tensor $A$ on an $m$-dimensional manifold with a Riemannian metric $h$, the following identity relating norm and trace holds pointwise
\[
\abs{A}^2 = \frac{(\text{Tr} A)^2}{m} + \bigabs{A - \frac{\text{Tr} A}{m}h}^2,
\]
we can write the first term in the right hand side of \eqref{scomphess} as follows
\begin{equation}
\label{scomphesst}
\abs{\nabla^2_T f}^2=\frac{(\Delta_T f)^2}{n-1} + \left\vert\nabla^2_T f - \frac{\Delta_T f}{n-1}\,g\right\vert^2,
\end{equation}
We now exploit the $p$-harmonicity of $f$. Indeed, by
\[
\Delta_p f=\abs{\nabla f}^{p-2}\bigg(\Delta f + (p-2)\nabla^2 f\bigg(\frac{\nabla f}{\abs{\nabla f}}, \frac{\nabla f}{\abs{\nabla f}}\bigg) \bigg)=0
\]
and $\nabla f(x)\neq 0$, we have
\[
\Delta f = - (p-2)\nabla^2 f\bigg(\frac{\nabla f}{\abs{\nabla f}}, \frac{\nabla f}{\abs{\nabla f}}\bigg).
\]
The above identity implies 
\begin{equation}
\label{scomdeltat}
\Delta_T f=\Delta f - \nabla^2 f (e_n, e_n)=-(p-1) \nabla^2 f\bigg(\frac{\nabla f}{\abs{\nabla f}}, \frac{\nabla f}{\abs{\nabla f}}\bigg),
\end{equation}
that, plugged into \eqref{scomphesst}, gives
\begin{equation}
\label{scomphesst1}
\abs{\nabla^2_T f}^2=\frac{(p-1)^2}{n-1}\left\vert\nabla^2 f\bigg(\frac{\nabla f}{\abs{\nabla f}}, \frac{\nabla f}{\abs{\nabla f}}\bigg)\right\vert^2  + \left\vert\nabla^2_T f - \frac{\Delta_T f}{n-1}\,g\right\vert^2.
\end{equation}

We now turn our attention to the second and the third term in \eqref{scomphess}.
An easy computation shows
\begin{equation}
\label{idhess}
\nabla^2 f(e_n, e_j)= \langle \nabla \abs{\nabla f}, e_j\rangle
\end{equation}
for any $j=1, \dots, n-1$, and thus we have
\begin{equation}
\label{hesst}
\sum_{j}^{n-1}\abs{\nabla^2 f(e_n, e_j)}^2 =  \bigabs{\nabla_T \abs{\nabla f}}^2
\end{equation}
and
\begin{equation}
\label{hessen}
\bigabs{\nabla^2 f (e_n, e_n)}^2=\bigg\langle \nabla \abs{\nabla f}, \frac{\nabla f}{\abs{\nabla f}}\bigg\rangle^2.
\end{equation}
Finally, plugging \eqref{scomphesst1}, \eqref{hesst} and \eqref{hessen} into \eqref{scomphess} we obtain \eqref{ineqf}. 
\newline{}

Let us now assume $\abs{\nabla f} > 0$ on $\{ f \geq \rho_0 \}$ and conditions \eqref{condrig1}-\eqref{condrig2} hold on this set. Then, by the first assumption, we deduce by  standard results in differential geometry (see e.g. \cite[Theorem 2.2]{hirsh}) that 
$\{ f \geq \rho_0\}$ is diffeomorphic to $[\rho_0, + \infty) \times \{f = \rho_0\}$, and in particular there exist new coordinates $\{f, x^1, \dots,  x^{n-1}\}$ on $\{ f \geq \rho_0\}$ such that
\[
g = \frac{df \otimes df}{\abs{\nabla f}^2} + g_{ij}(f, x) dx^i \otimes dx^j,
\]
where, $i, j$ range in $1, \dots, n-1$ and $\{x^i\}$ are coordinates on $\{f = \rho_0\}$. Observe now that by \eqref{condrig1}, the function $\abs{\nabla f}$ is constant on each level set of $f$. In other words, it is a function of $f$ alone. We can then define a new coordinate $\rho = df/\abs{\nabla f}$  so that the metric becomes
\[
g = d\rho \otimes d\rho + g_{ij}(\rho, x) dx^i \otimes dx^j,
\]
with some abuse of notation. In this coordinates, standard computations show that the Hessian is computed as 
\begin{equation}
\label{hess1}
\nabla^2 f = f'' d\rho\otimes d\rho + f' \nabla^2 \rho = f'' d\rho \otimes d\rho + \frac{1}{2} f'\partial_\rho g_{ij}dx^i \otimes dx^j,
\end{equation}
where by $f'$ and $f''$ we denote the derivatives of $f$ with respect to $\rho$. 

Let us now consider, for any fixed point $x \in \{\rho \geq \rho_0\}$ the  orthonormal frame $\{e_1, \dots, \, e_{n-1}, \, e_n = \nabla f/ \abs{\nabla f} = \nabla \rho\}$, already used in the first part of this proof. Then we have
\[
\nabla^2 f (e_n, e_j) = \langle \nabla \abs{\nabla f}, e_j \rangle = 0
\]
by \eqref{condrig1}, and
\[
\nabla^2_T f = - \frac{p-1}{n-1} \nabla^2 f \left(\frac{\nabla f}{\abs{\nabla f}}, \frac{\nabla f}{\abs{\nabla f}}\right)
\]
by \eqref{condrig2} combined with \eqref{scomdeltat}.
In particular, the Hessian of $f$ can also be computed as 
\[
\nabla^2 f = \nabla^2 f \left(\frac{\nabla f}{\abs{\nabla f}}, \frac{\nabla f}{\abs{\nabla f}}\right) d\rho \otimes d\rho -\frac{p-1}{n-1} \nabla^2 f \left(\frac{\nabla f}{\abs{\nabla f}}, \frac{\nabla f}{\abs{\nabla f}}\right) g_{ij} dx^i\otimes dx^j.
\]
A comparison with \eqref{hess1} then gives the system of ordinary differential equations
\begin{equation}
\label{ode}
\partial_\rho \log{g_{ij}(\rho, x)} = -2 \frac{p-1}{n-1} \partial_\rho \log {f'(\rho)},
\end{equation}
that, integrated, yields
\[
g_{ij} (\rho, x) = g_{ij} (\rho_0, x) \left( \frac{f'(\rho_0)}{f'(\rho)} \right)^{2\frac{p-1}{n-1}}, 
\] 
that is, $g$ has the warped product structure claimed, with 
\[
\eta (\rho) = \left( \frac{f'(\rho_0)}{f'(\rho)} \right)^{\frac{p-1}{n-1}}.
\]
Expression \eqref{eta} is clearly equivalent to the above one.
\end{proof}
\begin{remark}
We point out that similar identities have been used to get numerous rigidity results for the equation $\Delta_p u= f(u)$, $f\in C^1(\mathbb{R})$ in $\mathbb{R}^n$. See \cite{Val1,Val2,Val3}.
\end{remark}

As a corollary of the above Proposition, we record the following refined Kato's inequalities for $p$-harmonic functions, together with a characterization of the equality case. We will not need this corollary in the sequel, but it actually is of some independent interest.

\begin{corollary}[Refined Kato's inequalities for $p$-harmonic functions]
Let $(M, g)$ be a complete non-compact Riemannian manifold, and let $f$ be a $p$-harmonic function, with $p \in (1, n)$.
\begin{enumerate}
\item[(i)] If $(p-1)^2 \leq n-1$, then , on any $x\in M$ such that $\abs{\nabla f}(x) > 0$,
\begin{equation}
\label{kato1}
\abs{\nabla^2 f}^2 \geq \left(1 + \frac{(p-1)^2}{n-1}\right) \bigabs{\nabla\abs{\nabla f}}^2.
\end{equation}
Moreover, if equality is achieved on $\{f \geq f_0\}$ for some $f_0 \in \R$, and $\abs{\nabla f} > 0$ in this region, then the same conclusion of $(ii)$ in Proposition \ref{ineq} holds.
\item[(ii)] If $(p-1)^2 > n-1$, then, on any $x \in M$ such that $\abs{\nabla f} (x) > 0$,
\begin{equation}
\label{kato2}
\abs{\nabla^2 f}^2 \geq 2 \bigabs{\nabla\abs{\nabla f}}^2.
\end{equation}
Moreover, if equality is achieved on $\{f \geq \rho_0\}$ for some $f_0 \in \R$, and $\abs{\nabla f} > 0$ in this region, then $(\{f \geq \rho_0\}, g)$ splits as a Riemannian product \\
\noindent
 $([\rho_0, +\infty) \times g_{\mid\{ f = \rho_0\}}, d\rho \otimes d\rho + g_{\mid \{f = \rho_0\}})$ and $f$ is an affine function of $\rho$.
\end{enumerate}

\end{corollary}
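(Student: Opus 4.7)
The plan is to deduce both refined Kato inequalities directly from the pointwise identity \eqref{ineqf} of Proposition \ref{ineq}, which expresses $\abs{\nabla^2 f}^2 - \bigl(1 + \frac{(p-1)^2}{n-1}\bigr)\bigabs{\nabla\abs{\nabla f}}^2$ as a sum of two explicit terms, one of them weighted by the sign-sensitive factor $1 - \frac{(p-1)^2}{n-1}$.

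For part (i), when $(p-1)^2 \leq n-1$ this factor is non-negative, so the right-hand side of \eqref{ineqf} is a sum of non-negative quantities, which gives \eqref{kato1}. If equality holds on $\{f \geq f_0\}$ (at least in the strict regime $(p-1)^2 < n-1$) both summands must vanish pointwise, and those are exactly the hypotheses \eqref{condrig1}--\eqref{condrig2} of Proposition \ref{ineq}(ii), from which the warped-product rigidity follows immediately.

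For part (ii), I would regroup \eqref{ineqf} using the orthogonal splitting
\[
\bigabs{\nabla\abs{\nabla f}}^2 = \bigabs{\nabla_T\abs{\nabla f}}^2 + \biggl\langle\nabla\abs{\nabla f}, \frac{\nabla f}{\abs{\nabla f}}\biggr\rangle^2
\]
to eliminate $\bigabs{\nabla_T\abs{\nabla f}}^2$; a short computation then recasts \eqref{ineqf} as
\[
\abs{\nabla^2 f}^2 - 2\bigabs{\nabla\abs{\nabla f}}^2 = \biggl\lvert \nabla^2_T f - \frac{\Delta_T f}{n-1}\,g_T\biggr\rvert^2 + \biggl(\frac{(p-1)^2}{n-1} - 1\biggr)\biggl\langle\nabla\abs{\nabla f}, \frac{\nabla f}{\abs{\nabla f}}\biggr\rangle^2.
\]
When $(p-1)^2 > n-1$ both terms on the right are non-negative and \eqref{kato2} follows.

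For the rigidity in (ii), equality forces $\nabla^2_T f = \frac{\Delta_T f}{n-1}\,g_T$ and $\bigl\langle\nabla\abs{\nabla f}, \nu\bigr\rangle = 0$ throughout $\{f \geq \rho_0\}$. Feeding the second condition into \eqref{scomdeltat} gives $\Delta_T f = -(p-1)\,\nabla^2 f(\nu,\nu) = -(p-1)\bigl\langle\nabla\abs{\nabla f},\nu\bigr\rangle = 0$, whence $\nabla^2_T f = 0$; in particular the level sets of $f$ in $\{f \geq \rho_0\}$ are totally geodesic and $\abs{\nabla f}$ is constant along each integral curve of $\nabla f$. The main obstacle will be to promote these infinitesimal facts to the claimed global Riemannian-product splitting: introducing Fermi coordinates $(\rho, x)$ based on $\{f = \rho_0\}$, one must verify that the normal geodesic flow coincides with the $\nabla f$-flow and that $\abs{\nabla f}$ is actually constant on $\{f \geq \rho_0\}$, so that the warping function produced by Proposition \ref{ineq}(ii) degenerates to a constant. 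This sharpening over the warped-product conclusion is driven by the very restrictive off-diagonal form of the Hessian, whose only non-trivial entries are the mixed terms $\nabla^2 f(\nu, e_i) = \partial_i\abs{\nabla f}$, coupled with the $p$-harmonicity of $f$.
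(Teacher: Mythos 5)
Your treatment of both inequalities \eqref{kato1} and \eqref{kato2} is correct and coincides with the paper's: pointwise non-negativity of the two summands in \eqref{ineqf} gives (i), and the orthogonal splitting of $\bigabs{\nabla\abs{\nabla f}}^2$ into tangential and normal parts regroups \eqref{ineqf} into exactly the identity the paper writes, giving (ii). Your treatment of rigidity in (i) is also essentially the paper's, and the caveat about the borderline case $(p-1)^2 = n-1$ (where the coefficient of $\bigabs{\nabla_T\abs{\nabla f}}^2$ vanishes, so equality in \eqref{kato1} does not force \eqref{condrig2}) is a pertinent observation that the paper does not address explicitly.

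For the rigidity in (ii), however, there is a genuine gap. Equality in \eqref{kato2} only yields $\nabla^2_T f - \tfrac{\Delta_T f}{n-1}g_T = 0$ and $\langle\nabla\abs{\nabla f},\nabla f\rangle = 0$; combined with \eqref{scomdeltat} this gives $\nabla^2_T f = 0$ and $\nabla^2 f(\nu,\nu)=0$, but the mixed entries $\nabla^2 f(\nu,e_j) = \langle\nabla_T\abs{\nabla f},e_j\rangle$ are not controlled, as you yourself remark. Your proposal does not close this: the appeal to Proposition \ref{ineq}(ii)'s warping function is circular, because hypothesis \eqref{condrig2} ($\nabla_T\abs{\nabla f}=0$) is precisely what still needs proof, and the Fermi-coordinate program is only stated as a to-do ("one must verify\ldots"). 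The paper takes a more direct route: from the two vanishing conditions together with \eqref{scomdeltat} and \eqref{hessen} it asserts $\abs{\nabla^2 f}\equiv 0$ and then invokes \cite[Theorem 4.1(i)]{AgoMaz1} to obtain the product isometry and the affineness of $f$ in $\rho$. Whatever route you prefer, a concrete argument that the tangential gradient $\nabla_T\abs{\nabla f}$ vanishes is the missing step, and supplying it is what distinguishes a finished proof from the sketch you have.
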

\begin{proof}
The assertions in (i) follow straightforwardly from Proposition \ref{ineq}.

Let now $(p-1)^2 > n-1$. Plugging  
\[
\bigabs{\nabla_T\abs{\nabla f}}^2 = \bigabs{\nabla\abs{\nabla f}}^2 -  \bigg\langle \nabla \abs{\nabla f}, \frac{\nabla f}{\abs{\nabla f}}\bigg\rangle^2, 
\]
into \eqref{ineqf} we obtain \eqref{kato2}.
Assume now equality holds in \eqref{kato2}. Then, by \eqref{ineqf} and the above identity we obtain
\[
\bigabs{\nabla^2_T f - \frac{\Delta_T f}{n-1}\,g_T}^2 + \left(\frac{(p-1)^2}{n-1} - 1\right)\left\langle  \nabla \abs{\nabla f}, \frac{\nabla f}{\abs{\nabla f}}\right\rangle^2 = 0,
\]
that in turn imply 
\begin{align}
\bigabs{\nabla^2_T f - \frac{\Delta_T f}{n-1}\,g_T}^2  =\, 0 \\
\left\langle  \nabla \abs{\nabla f}, \frac{\nabla f}{\abs{\nabla f}}\right\rangle^2 =  0.
\end{align}
In particular, by  \eqref{scomdeltat} and \eqref{hessen}, we deduce that $\abs{\nabla^2 f} = 0$. The isometry claimed  follows by \cite[Theorem 4.1 (i)]{AgoMaz1}. An easy consequence is that $f$ is an affine function of $\rho$, as it can also be deduced by following the proof of the aforementioned result.
\end{proof}

\begin{remark} 
Setting $p=2$ in identity \eqref{ineqf} gives the well-known refined Kato's inequality for harmonic functions (see e.g. \cite{SSY})
\[
\abs{\nabla^2 f}^2 \geq \frac{n}{n-1}\bigabs{\nabla\abs{\nabla f}}^2.
\]
A proof of the rigidity associated to the equality case can be found in \cite[Proposition 5.1]{bour}. For the case $p \neq 2$ our characterization of the equality cases seems to be new. However, refined Kato's inequalities for $p$-harmonic functions are provided also in \cite{cinesi}.
\end{remark}
We are ready to prove the following identity involving the derivative of $\Psi_q^p$, in fact the key step  to obtain Theorem \ref{mainconf}. 
\begin{theorem}
\label{idint}
Let $1<p<n$, and $q\geq 1$. Let $\Psi_q^p$ be defined as in \eqref{psi}. Then, for $0<s<S$, we have
\begin{equation}
\label{idintf1}
\begin{split}
&\Big(e^{-\frac{n-p}{(n-2)(p-1)}S}\big(\Psi_q^p\big)'(S)- e^{-\frac{n-p}{(n-2)(p-1)}s}\big(\Psi_q^p\big)'(s)\Big)= \\
&{\big(q-1\big)}\bigintsss\limits_{\{{s\leq\psi\leq S}\}}\abs{\nabla\psi}^{(p-1)q-3}\Bigg[\left\vert\nabla^2_T \psi - \frac{\Delta_T \psi}{n-1}\,g_T\right\vert^2 + \Big(q(p-1)-1\Big)\bigabs{\nabla_T\abs{\nabla\psi}}^2 \\
&\qquad \qquad +\big(p-1\big)^2 \left[q - 1 - \frac{(n-p)}{(p-1)(n-1)}\right]\bigg\langle\nabla\abs{\nabla\psi},\frac{\nabla\psi}{\abs{\nabla\psi}}\bigg\rangle^2\Bigg]e^{-\frac{n-p}{(n-2)(p-1)}\psi}d\mu.
\end{split}
\end{equation}
{In particular, if $(p, q) \in \Lambda$}, we have
\begin{equation}
\label{idintf2}
e^{-\frac{n-p}{(n-2)(p-1)}S}\big(\Psi_q^p\big)'(S)- e^{-\frac{n-p}{(n-2)(p-1)}s}\big(\Psi_q^p\big)'(s)\geq 0.
\end{equation}
\end{theorem}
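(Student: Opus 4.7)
The plan is to apply the divergence theorem on the region $U=\{s<\psi<S\}$ to the weighted vector field $Y=e^{-c\psi}X$, where $X=\nabla\abs{\nabla\psi}^{(p-1)(q-1)}\abs{\nabla\psi}^{p-2}$ is, by Proposition~\ref{deri}, precisely the field whose normal flux on each level set $\{\psi=t\}$ recovers $(\Psi_q^p)'(t)$, and the constant $c$ is chosen so that the bulk integrand $e^{-c\psi}\bigl(\dive X-c\langle X,\nabla\psi\rangle\bigr)$ fits the shape of the Bochner-type identity of Proposition~\ref{bochgen}. Since $e^{-c\psi}$ is constant on the two boundary slices $\{\psi=s\}$ and $\{\psi=S\}$, the divergence theorem immediately delivers the left-hand side of \eqref{idintf1} as the boundary combination $e^{-cS}(\Psi_q^p)'(S)-e^{-cs}(\Psi_q^p)'(s)$, while the bulk side becomes a single volume integral on $U$.

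First I would use Proposition~\ref{bochgen} to rewrite this integrand as $(p-1)(q-1)\abs{\nabla\psi}^{q(p-1)-3}$ times a bracket involving $\abs{\nabla^2\psi}^2$, $\bigabs{\nabla\abs{\nabla\psi}}^2$ and the two residual $(p-2)$-terms, together with a proportional multiple of $\langle X,\nabla\psi\rangle=(p-1)(q-1)\abs{\nabla\psi}^{q(p-1)-1}A$ accounting for the mismatch between $c$ and the coefficient $\tfrac{n-p}{n-2}$ appearing in Proposition~\ref{bochgen}, where I abbreviate $A:=\langle\nabla\abs{\nabla\psi},\nabla\psi/\abs{\nabla\psi}\rangle$. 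Next, applying Proposition~\ref{ineq} with $f=\psi$ gives
\[
\abs{\nabla^2\psi}^2 = \left\vert\nabla^2_T\psi-\frac{\Delta_T\psi}{n-1}\,g_T\right\vert^2 + 2\bigabs{\nabla_T\abs{\nabla\psi}}^2 + \left(1+\frac{(p-1)^2}{n-1}\right)A^2,
\]
which, together with the orthogonal decomposition $\bigabs{\nabla\abs{\nabla\psi}}^2=\bigabs{\nabla_T\abs{\nabla\psi}}^2+A^2$, routes every Euclidean Hessian norm into the three tangential quantities appearing on the right-hand side of \eqref{idintf1}.

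The main obstacle is then to dispose of the term $-(p-2)\nabla^2\abs{\nabla\psi}(\nabla\psi,\nabla\psi)/\abs{\nabla\psi}$ produced by Proposition~\ref{bochgen}. Differentiating $\abs{\nabla\psi}^2$ twice and using the symmetry of the Hessian of a scalar yield the pointwise identity
\[
\frac{\nabla^2\abs{\nabla\psi}(\nabla\psi,\nabla\psi)}{\abs{\nabla\psi}} = \bigabs{\nabla_T\abs{\nabla\psi}}^2 + \frac{\nabla^3\psi(\nabla\psi,\nabla\psi,\nabla\psi)}{\abs{\nabla\psi}^2}.
\]
The cubic contraction $\nabla^3\psi(\nabla\psi,\nabla\psi,\nabla\psi)$ is curvature-free, since the Ricci identity's correction is proportional to $R_{ijkl}\nabla\psi^i\nabla\psi^k$ which vanishes by antisymmetry of the Riemann tensor in $(i,k)$ against the symmetric contraction; a direct computation then rewrites it as $\abs{\nabla\psi}^2\bigl(\nabla_{\nabla\psi}A-2\bigabs{\nabla_T\abs{\nabla\psi}}^2\bigr)$. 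The surviving radial directional derivative $\nabla_{\nabla\psi}A$ is integrated by parts against the remaining weighted density, exploiting the $p$-harmonic equation $\Delta\psi=-(p-2)A$ to re-express $\dive(\abs{\nabla\psi}^\alpha e^{-c\psi}\nabla\psi)$; the resulting contributions, after collecting like powers of $\abs{\nabla\psi}$ and $A$, collapse the prefactor from $(p-1)(q-1)$ down to $(q-1)$ and produce exactly the coefficients $1$, $q(p-1)-1$, and $(p-1)^2\bigl[q-1-(n-p)/((p-1)(n-1))\bigr]$ in front of the squared traceless tangential Hessian, the tangential gradient, and the radial derivative, respectively.

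Finally, for \eqref{idintf2} I would check that when $(p,q)\in\Lambda$ each coefficient in the resulting bracket is non-negative: the first is trivially $1$; the second, $q(p-1)-1$, is bounded below by $(n-2)(p-1)/(n-1)>0$ thanks to the lower bound $q\geq 1+(n-p)/((p-1)(n-1))$ defining $\Lambda$; and the third is non-negative by that same lower bound. Since also $q-1>0$ strictly, the integrand in \eqref{idintf1} is pointwise non-negative on $\{s\leq\psi\leq S\}$, and \eqref{idintf2} follows.
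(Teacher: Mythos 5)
Your proposal is correct and follows essentially the same strategy as the paper: integrating the Bochner-type identity of Proposition~\ref{bochgen} against the weight $e^{-\frac{n-p}{(n-2)(p-1)}\psi}d\mu$, recognizing the boundary terms as the weighted flux of $X$ via the Divergence Theorem, integrating by parts the residual $\nabla^2\abs{\nabla\psi}(\nabla\psi,\nabla\psi)$ term, and finally applying the refined Kato identity of Proposition~\ref{ineq} to reach the manifestly signed decomposition. The one notable difference is a small detour: you route the Hessian-of-gradient term through $\nabla^3\psi(\nabla\psi,\nabla\psi,\nabla\psi)$ and back to $\nabla_{\nabla\psi}A$, whereas the paper integrates $\nabla^2\abs{\nabla\psi}(\nabla\psi,\nabla\psi)$ by parts in one step; the two reductions are the same pointwise identity ($\nabla^2\abs{\nabla\psi}(\nabla\psi,\nabla\psi)/\abs{\nabla\psi}=\nabla_{\nabla\psi}A - \abs{\nabla_T\abs{\nabla\psi}}^2$), and your remark that the Riemann-tensor correction vanishes is stated a bit loosely (it is the antisymmetry of $R$ in its last index pair contracted against the symmetric $\nabla^i\psi\nabla^j\psi$, not antisymmetry in $(i,k)$) but inessential since no commutation of derivatives is actually needed. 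You also apply Proposition~\ref{ineq} before the last integration by parts rather than after, which is harmless since it is a pointwise algebraic identity; the sign check for $(p,q)\in\Lambda$ is carried out correctly.
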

\begin{proof}
We integrate both sides of \eqref{bochgenf} on $U=\{s\leq\psi\leq S\}$ with respect to the measure $e^{-\frac{n-p}{(n-2)(p-1)}\psi}d\mu$. The reason behind the choice the weight will be clear in a moment. Integration by parts of the last term of \eqref{bochgenf} combined with $p$-harmonicity of $\psi$ and the identity $\nabla^2 \psi (\nabla \psi)=\abs{\nabla \psi} \nabla \abs{\nabla \psi}$ yields
\begin{equation}
\label{parti}
\begin{split}
&\int_U\nabla^2\abs{\nabla\psi}(\nabla\psi, \nabla\psi)\abs{\nabla\psi}^{p-2}\abs{\nabla\psi}^{2-p}\abs{\nabla\psi}^{q(p-1)-4}e^{-\frac{n-p}{(n-2)(p-1)}\psi}d\mu= \\
&\qquad -\int_U\bigabs{\nabla\abs{\nabla\psi}}^2\abs{\nabla\psi}^{q(p-1)-3}e^{-\frac{n-p}{(n-2)(p-1)}\psi}d\mu \\
&\qquad-\big(q(p-1)-p-2\big)\int_U\langle\nabla\abs{\nabla\psi}, \nabla\psi\rangle^2\abs{\nabla\psi}^{q(p-1)-5}e^{-\frac{n-p}{(n-2)(p-1)}\psi}d\mu \\
&\qquad+\frac{n-p}{(n-2)(p-1)}\int_U\abs{\nabla\psi}^{q(p-1)-2}\langle\nabla\abs{\nabla\psi}, \nabla\psi\rangle e^{-\frac{n-p}{(n-2)(p-1)}\psi}d\mu \\
&\qquad+\int_{\partial U}\langle\nabla\abs{\nabla\psi}, \nu\rangle\abs{\nabla\psi}^{q(p-1)-2}e^{-\frac{n-p}{(n-2)(p-1)}\psi}d\sigma,
\end{split}
\end{equation}
where $\nu$ is the exterior unit normal to $U$. 
Noticing that 
\[
X=\big((p-1)(q-1)\big)\abs{\nabla\psi}^{q(p-1)-2}\nabla\abs{\nabla\psi},
\]
by a straightforward rearrangement of terms we obtain
\[
\begin{split}
&\int_U \Big(\dive(X)- \frac{n-p}{(n-2)(p-1)}\langle X, \nabla\psi\rangle\Big) e^{-\frac{n-p}{(n-2)(p-1)}\psi} d\mu= \\
&\bigintsss_U\big((p-1)(q-1)\big)\abs{\nabla\psi}^{q(p-1)-3}\Bigg(\abs{\nabla^2\psi}^2 + \big(q(p-1) - 3\big)\bigabs{\nabla\abs{\nabla\psi}^2} \\
&\quad +(p-2)\big(q(p-1)-p-1\big)\bigg(\frac{\langle\nabla\abs{\nabla\psi},\nabla\psi\rangle}{\abs{\nabla\psi}}\bigg)^2\Bigg)e^{-\frac{n-p}{(n-2)(p-1)}\psi}d\mu \\
&\!\!-(p-2)\int_{\partial U} \langle X, \nu\rangle e^{-\frac{n-p}{(n-2)(p-1)}\psi} d\sigma.
\end{split}
\]
Finally, consider the vector field
\[
Y=X e^{-\frac{n-p}{(n-2)(p-1)}\psi}.
\]
Since
\[
\dive Y=\Big(\dive X- \frac{(n-p)}{(n-2)(p-1)}\langle X, \nabla\psi\rangle\Big)e^{-\frac{n-p}{(n-2)(p-1)}\psi},
\]
Divergence Theorem gives
\begin{align*}
\int_U\Big(\dive(X)- \frac{n-p}{(n-2)(p-1)}\langle X, \nabla\psi\rangle\Big)e^{-\frac{n-p}{(n-2)(p-1)}\psi} d\mu &\!= \int_{\partial U} \langle Y, \nu\rangle d\sigma\\
&\!=\!\!\!\int_{\partial U} \langle X, \nu\rangle e^{-\frac{n-p}{(n-2)(p-1)}\psi} d\sigma.
\end{align*}
By Proposition \ref{deri} we have
\begin{equation*}
\int_{\partial U}\langle X, \nu\rangle e^{-\frac{n-p}{(n-2)(p-1)}\psi} d\sigma=\Big(e^{-\frac{n-p}{(n-2)(p-1)}S}\big(\Psi_q^p\big)'(S)- e^{-\frac{n-p}{(n-2)(p-1)}s}\big(\Psi_q^p\big)'(s)\Big),
\end{equation*}
and thus, by rearranging terms and dividing for $(p-1)$, we obtain
\begin{equation}
\begin{split}
&\Big(e^{-\frac{n-p}{(n-2)(p-1)}S}\big(\Psi_q^p\big)'(S)- e^{-\frac{n-p}{(n-2)(p-1)}s}\big(\Psi_q^p\big)'(s)\Big)= \\
&{\big(q-1\big)}\bigintsss_{\{{s\leq\psi\leq S}\}}\abs{\nabla\psi}^{q(p-1)-3}\Bigg[\abs{\nabla^2\psi}^2 + \Big(q(p-1)-3\Big)\bigabs{\nabla\abs{\nabla\psi}}^2 \\
&+(p-2)\Big(q(p-1)-p-1\Big)\left\langle\nabla\abs{\nabla\psi}, \frac{\nabla\psi}{\abs{\nabla \psi}}\right\rangle^2\Bigg]e^{-\frac{n-p}{(n-2)(p-1)}\psi}d\mu.
\end{split}
\end{equation}
Applying \eqref{ineqf} to the integrand on the right hand side of the above identity, with $\psi = f$ and $x\in U$, we obtain, after some elementary algebra, that 
\begin{equation}
\label{squarebra}
\begin{split}
&\!\!\!\!\!\abs{\nabla^2\psi}^2 + \Big((p-1)q-3\Big)\bigabs{\nabla\abs{\nabla\psi}}^2 
+(p-2)\Big(q(p-1)-p-1\Big)\bigg\langle\nabla\abs{\nabla\psi},\frac{\nabla\psi}{\abs{\nabla\psi}}\bigg\rangle^2= \\
=\,\,\,\, &\bigabs{\nabla^2_T f - \frac{\Delta_T f}{n-1}\,g_T}^2 + \Big(q(p-1)-1\Big)\bigabs{\nabla_T\abs{\nabla\psi}}^2  \\
\,\,+\, &\big(p-1\big)^2 \left[q - 1 - \frac{(n-p)}{(p-1)(n-1)}\right]\bigg\langle\nabla\abs{\nabla\psi},\frac{\nabla\psi}{\abs{\nabla\psi}}\bigg\rangle^2,
\end{split}
\end{equation}
where the tangential elements are referred to the level set $\{ \psi = \psi(x)\}$.
The proof is thus completed.

\begin{comment}
We now turn to prove that the right hand side of \eqref{derif} is greater or equal than zero for $(p,q) \in \Lambda$. By classic Kato inequality
\[
\abs{\nabla^2\psi}\geq \bigabs{\nabla\abs{\nabla\psi}}^2,
\]
we immediately get that 
\begin{equation}
\label{kato1}
\abs{\nabla^2\psi}^2 + \Big((p-1)q - 3\Big)\bigabs{\nabla\abs{\nabla\psi}}^2\geq \Big((p-1)q-2\Big)\bigabs{\nabla\abs{\nabla\psi}}^2.
\end{equation}
We then divide the cases $p\in (1, 2]$ and $p\in(2, n)$. When $p\in (1, 2]$, if  $2/(p-1)\leq q\leq(p+1)/(p-1)$, the integrand on the right hand side of \eqref{idintf1} is nonnegative. Otherwise, if $q\geq (p+1)/(p-1)$, we have by Cauchy-Schwartz 
\begin{equation}
\label{cs1}
(p-2)\Big((p-1)q - p - 1\Big)\bigg(\frac{\nabla\abs{\nabla\psi}, \nabla\psi}{\abs{\nabla\psi}}\bigg)^2\geq (p-2)\Big((p-1)q - p - 1\Big)\bigabs{\nabla\abs{\nabla\psi}}^2, 
\end{equation}
and thus by Kato inequality we get that the right hand side of the \eqref{idintf1} is nonnegative if 
\[
((p-1)q-2) + (p-2)((p-1)q-p-1)\geq 0,
\]
that is, if $(p-1)q\geq p$. We thus obtain that when $p\in (1, 2]$ the right hand side of \eqref{idintf1} is nonnegative if $q\geq 2/(p-1)$.

Let now $p\in (2, n)$. If $(p-1)q\geq p+1$ we are done. Otherwise, as before, by Cauchy-Schwartz inequality we get that the right hand side of \eqref{idintf1} is nonnegative if $(p-1)q\geq p$. Thus, we get that that if  $p\in (2, n)$ and $q\geq p/(p-1)$ the right hand side of \eqref{idintf1} is nonnegative.
\end{comment}


\end{proof}
We have now all the ingredients to conclude the proof of Theorem \ref{mainconf} .
\begin{comment}
\begin{theorem}
\label{derpsif}
Let $(p, q) \in \Lambda$. Then the following facts hold true.
\begin{enumerate}
\item[(i)]
For any $s \geq 0$ the derivative of $\Psi_q^p$ is computed as follows.
\begin{equation}
\label{derpsi2}
\begin{split}
\big(\Psi_q^p\big)'(s) 
=-{\big(q-1\big)}&e^{\frac{n-p}{(n-2)(p-1)}s}\bigintsss_{\{{\psi\geq s}\}}\abs{\nabla\psi}^{(p-1)q-3}\Bigg[\left\vert\nabla^2_T f - \frac{\Delta_T f}{n-1}\,g_T\right\vert^2 \\
&\!\!\!\!\!\!\!\!+ \Big((p-1)q-1\Big)\bigabs{\nabla_T\abs{\nabla\psi}}^2 \\
&\!\!\!\!\!\!\!\! +\big(p-1\big) \left((p-1)q - p + \frac{p-1}{n-1} \right)\bigg\langle\nabla\abs{\nabla\psi},\frac{\nabla\psi}{\abs{\nabla\psi}}\bigg\rangle^2\Bigg] d\mu 
\leq 0.
\end{split}
\end{equation}
\item[(ii)]
If $\big(\Psi_p^q\big)'(s_0)=0$ for some $(p, q)\in\Lambda$ and $s_0\geq 0$, the manifold $(\{\psi\geq s_0\}, g)$ is isometric to $\big([s_0, + \infty) \times \{\psi=s_0\}, d\rho\otimes d\rho + g_{|\{\psi=s_0\}}\big)$, where $\rho$ is the $g$-distance to $\{\psi=s_0\}$ and $\psi$ is an affine function of $\rho$. Moreover $\left(\{\psi =s_0\}, g_{\mid\{\psi = s_0\}}\right)$ is a constant curvature sphere.
\end{enumerate}
\end{theorem} 
\end{comment}


\begin{proof}[Conclusion of the proof of Theorem \ref{mainconf}]
We first prove that $\big(\Psi_q^p\big)'(s)\leq 0$ for any $s\geq 0$, adapting an argument used in the proof of \cite[Theorem 1.1]{ColdMin}. Indeed, by \eqref{idintf2}, for any $S\geq s$ we have 
\[
(\Psi_q^p)'(S)\geq  e^{\frac{n-p}{(n-2)(p-1)}(S-s)}(\Psi_q^p)'(s).
\]
Integrating the above identity with respect to $S$, we get
\[
\Psi_q^p(S)\geq{\frac{(n-2)(p-1)}{(n-p)}} \left(e^{\frac{n-p}{(n-2)(p-1)}(S-s)}-1\right){\Psi_q^p}'(s) + \Psi_q^p (s)
\]
for any $S>s$. If, for some $s>0$, $(\Psi_q^p)'(s)$ was strictly positive, then, letting $S\to\infty$, we would obtain $\Psi_q^p\to\infty$, against the uniform boundedness of $\Psi_q^p$ proved in Lemma \ref{bound}.

Finally, notice that since $\Psi_p^q$ is a continuous, bounded, nonincreasing function, we have that $\big(\Psi_p^q)'(S)\to 0$ as $S\to+\infty$. Formula \eqref{derpsi2} is thus proved by passing to the limit as $S\to +\infty$ in \eqref{idintf1}.  
\newline{}

Assume now $\left(\Psi_q^p\right)'(s_0)=0$ for some $(p, q) \in \Lambda$. If $q > 1 + (n-p)/[(p-1)(n-1)]$, then $\nabla \abs{\nabla \psi} = 0$. Thus, plugging this information in the Bochner-type formula given by Lemma \ref{bochp}, we get $\nabla^2\psi = 0$. The isometry with the Riemannian product then follows from \cite[Theorem 4.1-(i)]{AgoMaz1}. 

If $q = 1 + (n-p)/[(p-1)(n-1)]$, then we just have
\[
\bigabs{\nabla^2_T f - \frac{\Delta_T f}{n-1}\,g_T}^2  = 0 
\]
and
\[
\bigabs{\nabla_T\abs{\nabla f}}^2 =0.
\]
But then, by (ii) in Proposition \ref{ineq},
the metric $g$ on $\{\psi \geq s_0\}$ has a warped product structure
\[
g = d\rho \otimes d\rho + \eta^2 (\rho) g_{\mid\{\psi= s_0\}},
\]
for some positive warping function $\eta$. Moreover, by expression \eqref{eta}, $\psi$ and $\rho$ share the same level sets. In particular, the second fundamental form of the level sets of $\psi$ satisfies
\[
h_{ij}=\frac{1}{2}\frac{\partial g_{ij}}{\partial \rho} = \frac{d \log \eta}{d \rho} g_{ij},
\] 
and thus, taking the trace, we see that the level sets of $\psi$ have constant mean curvature. Employing now expression \eqref{derif} for the derivative of $\Psi_q^p$, we deduce that the mean curvature of $\{\psi =s\}$ is constantly zero for any $s \geq s_0$, observing also that by expression \eqref{derpsi2} the derivative of $\Psi_q^p$ vanishes for any $s \geq s_0$. By \eqref{meancurv2}, we then obtain $\langle \nabla \abs{\nabla \psi}, \nabla \psi\rangle =0$. In particular, we have $\nabla\abs{\nabla \psi}=0$ and the isometry follows as in the case $q > 1 + (n-p)/[(p-1)(n-1)]$. 

The isometry of $\{\psi = s_0\}$ with a constant curvature sphere follows by \cite[Theorem 4.1-(ii)]{AgoMaz1}, once noticed that the second equation in \eqref{prob_ex_rif} is equivalent to the one considered in the aforementioned paper  as $\nabla^2 \psi = 0$, that is indeed our case.
\end{proof}

\section{Proof of Theorem \ref{sharpbound}}
Our second main result is a further consequence of the Bochner-type differential identity \eqref{pBoch2}, a strong maximum principle for $\abs{\nabla\psi}$. 
The key point in our proof of Theorem \ref{sharpbound} is the fact that identity \eqref{pBoch2} implies $\abs{\nabla\psi}^p$ to be sub-solution of an elliptic equation. Indeed, let us define the operator $\mathcal{L}: C^2(M) \to \R$ as
\begin{equation}
\label{l}
\mathcal{L}f=\Delta f + (p-2)\nabla^2 f\bigg( \frac{\nabla\psi}{\abs{\nabla\psi}}, \frac{\nabla \psi}{\abs{\nabla \psi}}\bigg) - \frac{n-p}{n-2}\langle \nabla f, \nabla\psi\rangle,
\end{equation}
where $\psi$ is a solution of \eqref{prob_ex_rif}. Then the following lemma holds. 
\begin{lemma}
\label{elliptic2}
Let $(M, g, \psi)$ be a solution to \eqref{prob_ex_rif}, and let $\mathcal{L}$ be the differential operator defined in \eqref{l}. Then the following facts hold true.
\begin{enumerate}
\item[i)] The operator $\mathcal{L}$ is elliptic non-degenerate.
\item[ii)] 
\begin{equation}
\label{sub}
\mathcal{L}(\abs{\nabla \psi}^p)\geq 0.
\end{equation}
\item[iii)] 
\begin{equation}
\label{barrier}
\mathcal{L} \left(e^{\frac{n-p}{(n-2)(p-1)}\psi}\right) = 0.
\end{equation}
\end{enumerate}
\end{lemma}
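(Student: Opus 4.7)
My plan is to dispatch the three items in increasing order of difficulty, starting with (iii), which is a direct algebraic verification, then (i), which is a symbol computation, and finally (ii), which is an immediate consequence of the Bochner-type identity \eqref{pBoch2}.

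For (iii), write $\lambda=\frac{n-p}{(n-2)(p-1)}$ and compute $\nabla(e^{\lambda\psi})=\lambda e^{\lambda\psi}\nabla\psi$ and $\nabla^2(e^{\lambda\psi})=\lambda^2 e^{\lambda\psi}\,d\psi\otimes d\psi+\lambda e^{\lambda\psi}\nabla^2\psi$. Plugging into \eqref{l} and collecting,
\begin{equation*}
\mathcal{L}(e^{\lambda\psi})=e^{\lambda\psi}\Bigl\{(p-1)\lambda^2\abs{\nabla\psi}^2+\lambda\Bigl[\Delta\psi+(p-2)\frac{\nabla^2\psi(\nabla\psi,\nabla\psi)}{\abs{\nabla\psi}^2}\Bigr]-\frac{n-p}{n-2}\lambda\abs{\nabla\psi}^2\Bigr\}.
\end{equation*}
The middle bracket is $\abs{\nabla\psi}^{2-p}\Delta_{p;g}\psi=0$ by the first equation in \eqref{prob_ex_rif}, while the choice of $\lambda$ forces $(p-1)\lambda^2=\frac{n-p}{n-2}\lambda$, which gives (iii).

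For (i), the principal symbol of $\mathcal{L}$ at a point where $\nabla\psi\neq 0$ acts on a cotangent vector $\xi$ as $\abs{\xi}_g^2+(p-2)\langle\xi,\nu\rangle_g^2$, with $\nu=\nabla\psi/\abs{\nabla\psi}_g$. If $p\geq 2$ this is obviously $\geq\abs{\xi}_g^2$; if $1<p<2$, using $\langle\xi,\nu\rangle_g^2\leq\abs{\xi}_g^2$ one bounds it below by $(p-1)\abs{\xi}_g^2>0$. In either case ellipticity (and non-degeneracy) follows, noting that by Theorem \ref{lewis}(ii) (and \eqref{cambgrad}) we have $\abs{\nabla\psi}_g>0$ throughout $M$, so $\nu$ is globally well-defined.

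Item (ii) is the heart of the lemma. The Bochner-type identity \eqref{pBoch2} rewrites precisely as
\begin{equation*}
\mathcal{L}(\abs{\nabla\psi}^p)=p\abs{\nabla\psi}^{p-2}\Bigl(\abs{\nabla^2\psi}^2+p(p-2)\bigl\langle\nabla\abs{\nabla\psi},\tfrac{\nabla\psi}{\abs{\nabla\psi}}\bigr\rangle^2\Bigr).
\end{equation*}
When $p\geq 2$ the right-hand side is manifestly non-negative. When $1<p<2$, the factor $p(p-2)$ is negative, so I would use Cauchy--Schwarz $\bigl\langle\nabla\abs{\nabla\psi},\tfrac{\nabla\psi}{\abs{\nabla\psi}}\bigr\rangle^2\leq\bigabs{\nabla\abs{\nabla\psi}}^2$ together with the classical Kato inequality $\abs{\nabla^2\psi}^2\geq\bigabs{\nabla\abs{\nabla\psi}}^2$ to estimate
\begin{equation*}
\abs{\nabla^2\psi}^2+p(p-2)\bigl\langle\nabla\abs{\nabla\psi},\tfrac{\nabla\psi}{\abs{\nabla\psi}}\bigr\rangle^2\geq\bigl(1+p(p-2)\bigr)\bigabs{\nabla\abs{\nabla\psi}}^2=(p-1)^2\bigabs{\nabla\abs{\nabla\psi}}^2\geq 0,
\end{equation*}
proving (ii). The only subtle point in the whole argument is the sign handling for $p<2$ in (ii); everything else is a direct computation using \eqref{pBoch2}, \eqref{prob_ex_rif}, and the definition of $\mathcal{L}$.
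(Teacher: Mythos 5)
Your proposal is correct and follows essentially the same route as the paper's proof: (iii) by direct computation using $p$-harmonicity of $\psi$, (i) by checking positive-definiteness of the principal symbol in a frame adapted to $\nabla\psi$, and (ii) by combining the Bochner-type identity \eqref{pBoch2} with Kato's inequality. The only stylistic difference is in step (ii): where you distinguish the cases $p\geq 2$ and $1<p<2$ and apply Cauchy--Schwarz in the latter, the paper treats all $p>1$ uniformly by splitting $\bigabs{\nabla\abs{\nabla\psi}}^2$ into tangential and normal parts to obtain $\bigabs{\nabla_T\abs{\nabla\psi}}^2+(p-1)^2\langle\nabla\abs{\nabla\psi},\nabla\psi/\abs{\nabla\psi}\rangle^2\geq 0$ without any case distinction.
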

\begin{proof}
Let $x\in M$, and choose at $x$ an orthonormal frame $\{e_1, \dots,\, e_{n-1},\, e_n=\nabla\psi/ \abs{\nabla\psi}\}$, recalling that $|\nabla\psi|\neq 0$ on $M$. Then, the higher order term in $\mathcal{L}$ can be computed at $x$ as 
\[
\mathbb{L}(f)=\nabla_{e_1}\big(\nabla_{e_1} f\big) + \dots + \nabla_{e_{n-1}}\big(\nabla_{e_{n-1}} f\big) + (p-1)\nabla_{e_n}\big(\nabla_{e_n} f\big). 
\]
Since $p>1$, the operator $\mathcal{L}$ is then clearly elliptic non-degenerate.
\newline{}

Let us check that $\abs{\nabla\psi}^p$ satisfies $\mathcal{L}(\abs{\nabla\psi}^p)\geq 0$. By \eqref{pBoch2},
we have
\[
\mathcal{L}(\abs{\nabla \psi}^p)=p\abs{\nabla\psi}^{p-2}\left(\abs{\nabla^2 \psi} + p(p-2)\Big\langle \nabla \abs{\nabla\psi}, \frac{\nabla \psi}{\abs{\nabla \psi}}\Big\rangle^2\right).
\]
By the standard Kato inequality $\abs{\nabla ^2 \psi}^2 \geq \bigabs{\nabla \abs{\nabla \psi}}^2$, we obtain 
\[
\mathcal{L}(\abs{\nabla\psi}^p)\geq p\abs{\nabla\psi}^{p-2}\left(\abs{\nabla\abs{\nabla\psi}}^2 + p(p-2)\Big\langle \nabla \abs{\nabla\psi}, \frac{\nabla \psi}{\abs{\nabla \psi}}\Big\rangle^2\right),
\]
and, since 
\[
\abs{\nabla\abs{\nabla\psi}}^2 + p(p-2)\Big\langle \nabla \abs{\nabla\psi}, \frac{\nabla \psi}{\abs{\nabla \psi}}\Big\rangle^2 = \bigabs{\nabla_T \abs{\nabla \psi}}^2 + (p-1)^2 \Big\langle \nabla \abs{\nabla\psi}, \frac{\nabla \psi}{\abs{\nabla \psi}}\Big\rangle^2 \geq 0,
\]
we get \eqref{sub}. As before, $\nabla_T$ denotes the tangential gradient with respect to the level set $\{\psi = \psi(x)\}$, $x$ being the generic point where computations are carried out.
\newline{}

Let us now compute $\mathcal{L} (e^{\frac{n-p}{(n-2)(p-1)}\psi})$.
First, we have
\begin{equation}
\label{delta}
\Delta (e^{\frac{n-p}{(n-2)(p-1)}\psi})=\frac{n-p}{(n-2)(p-1)}e^{\frac{n-p}{(n-2)(p-1)}\psi}\bigg(\frac{n-p}{(n-2)(p-1)}\abs{\nabla\psi}^2 + \Delta\psi\bigg)
\end{equation}
Using 
\begin{equation}
\label{plappsi}
\Delta_p\psi=\abs{\nabla\psi}^{p-2}\bigg(\Delta \psi + (p-2)\nabla^2 \psi \left(\frac{\nabla\psi}{\abs{\nabla\psi}}, \frac{\nabla\psi}{\abs{\nabla\psi}}\right)\bigg)=0,
\end{equation}
where the last equality follows from the $p$-harmonicity of $\psi$, we obtain, from \eqref{delta} and the fact that $\abs{\nabla\psi}\neq 0$ 
\begin{align}
\label{delta1}
&\Delta (e^{\frac{n-p}{(n-2)(p-1)}\psi})=\\
\nonumber
&=\frac{n-p}{(n-2)(p-1)}e^{\frac{n-p}{(n-2)(p-1)}\psi}\left(\frac{n-p}{(n-2)(p-1)}\abs{\nabla\psi}^2 - (p-2) \nabla^2 \psi \left(\frac{\nabla\psi}{\abs{\nabla\psi}}, \frac{\nabla\psi}{\abs{\nabla\psi}}\right)\right).
\end{align}
The second term to be computed in $\mathcal{L}(e^{\frac{n-p}{(n-2)(p-1)}\psi})$ is 
\begin{equation}
\label{hess}
\begin{split}
\nabla^2\big(e^{\frac{n-p}{(n-2)(p-1)}\psi}\big)\bigg(\frac{\nabla\psi}{\abs{\nabla\psi}}, \frac{\nabla\psi}{\abs{\nabla\psi}}\bigg) = \frac{n-p}{(n-2)(p-1)}e^{\frac{n-p}{(n-2)(p-1)}\psi}\bigg[&\frac{n-p}{(n-2)(p-1)}\abs{\nabla\psi}^2 
\\
 + &\nabla^2 \psi \bigg(\frac{\nabla\psi}{\abs{\nabla\psi}}, \frac{\nabla\psi}{\abs{\nabla\psi}}\bigg)\bigg],
\end{split}
\end{equation}
and the last one is 
\begin{equation}
\label{scal}
\langle\nabla e^{\frac{n-p}{(n-2)(p-1)}\psi}, \nabla\psi\rangle=\frac{n-p}{(n-2)(p-1)}e^{\frac{n-p}{(n-2)(p-1)}\psi}\abs{\nabla\psi}^2.
\end{equation}
By \eqref{delta1}, \eqref{hess} and \eqref{scal} we get $\mathcal{L}(e^{\frac{n-p}{(n-2)(p-1)}\psi})=0$, as claimed.
\end{proof}

We are now in position to prove Theorem \ref{sharpbound}.

\begin{proof}[Proof of Theorem \ref{sharpbound}]
We claim that
\begin{equation}
\label{sharpboundf}
\abs{\nabla \psi}(x) \leq \sup_{\{\psi=s\}}\abs{\nabla \psi}
\end{equation}
for any $x \in \{\psi \geq s\}$. This clearly suffices to prove the monotonicity of $V_\infty^p$. Indeed for any $s_1\leq s_2$, \eqref{sharpboundf} implies
\begin{align}
\sup_{\{\psi=s_1\}}\abs{\nabla \psi}\geq \sup_{\{\psi\geq s_1\}} \abs{\nabla \psi}\geq \sup_{\{\psi\geq s_2\}} \abs{\nabla \psi}\geq \sup_{\{\psi=s_2\}} \abs{\nabla \psi}.
\end{align}
Recall that by Proposition \ref{mono_and} there exists $C>0$ such that
\[
\abs{\nabla\psi}\leq C
\] 
on $M$.
Consider then, for such a constant $C$ and for $S > s$, the function
\[
w=\abs{\nabla\psi}^p - \sup_{\{\psi = s\}}\abs{\nabla\psi}^p - C^p{e^{\frac{n-p}{(n-2)(p-1)}(\psi - S)}}
\]
defined on $\{s \leq \psi\leq S\}$. Clearly, we have
\[
\sup_{\{\psi = s\} \cup \{\psi = S\}}w \leq 0,
\] 
and thus, since by Lemma \ref{elliptic2} $\mathcal{L}$ is a uniformly elliptic operator on $\{s \leq \psi\leq S\}$ and $\mathcal{L}(w)\geq 0$ on the same bounded domain, the Maximum Principle applies and yields
\[
\abs{\nabla\psi}^p\leq \sup_{\{\psi = s\}}\abs{\nabla\psi}^p + C^p{e^{\frac{n-p}{(n-2)(p-1)}(\psi - S)}}
\]
on $\{s \leq \psi \leq S\}$. Computing the above inequality on a fixed $x\in\{s \leq \psi\leq S\}$, and passing to the limit as $S\to + \infty$ yields the claim.

Suppose that for some $s_1 > s$ there holds $\Psi_\infty^p(s_1)=\Psi_\infty^p(s)$ and let $x_{s_1}$ the maximum point of $\abs{\nabla \psi}$ on $\{\psi = s_1\}$. In particular, we have 
\[
\abs{\nabla \psi}^p(x_{s_1})=\sup_{\{\psi =s\}}\abs{\nabla \psi}^p.
\]
Let then $S>s_1$. We have
\[
\abs{\nabla\psi}^p(x_{s_1})=\sup_{\{\psi = s\} \cup \{\psi=S\}}\abs{\nabla\psi}^p,
\]
where we have used the just proved inequality \eqref{sharpboundf}. Thus, since by Lemma \ref{elliptic2}\, \mbox{$\mathcal{L}(\abs{\nabla\psi})^p \geq 0$}, we obtain by Strong Maximum Principle that $\abs{\nabla \psi}^p$ is constant on $\{s \leq \psi \leq S\}$. Since $S$ was arbitrarily big, we actually get that $\abs{\nabla \psi}$ is constant on the whole $\{ \psi \geq s\}$ and therefore $\bigabs{\nabla \abs{\nabla\psi}}=0$. By the Bochner-type identity \eqref{pBoch2} we then get $\abs{\nabla^2 \psi} = 0$, and the rigidity follows by \cite[Theorem 4.1-(i)]{AgoMaz1}.
\newline{}

We now turn to prove the second part of Theorem \ref{sharpbound}.
Let $x_s$ as in the statement. We have just proved that
\[
\abs{\nabla \psi}^p (x_s)\geq \abs{\nabla \psi}^p (x)
\]
for any $x \in \{\psi \geq s\}$. This implies that the derivative with respect to the normal vector $\nu = \nabla \psi /\abs{\nabla \psi}$ satisfies
\begin{equation}
\label{dernormpsi}
\frac{\partial}{\partial \nu} \abs{\nabla \psi}^p(x_s) \leq 0.
\end{equation}
Using \eqref{meancurv2} we get
\[
\left\langle \nabla \abs{\nabla \psi}^p, \frac{\nabla \psi}{\abs{\nabla \psi}} \right\rangle (x_s) = - \frac{p}{p-1}\left(\abs{\nabla \psi}^p H\right)(x_s),
\]
where $H$ is the mean curvature of the set $\{\psi=x_s\}$ and inequality \eqref{hgineq} follows.

Assume now that such an inequality holds with  equality sign.
Then, in particular, the normal derivative in \eqref{dernormpsi} vanishes. But since $x_s$ is a global maximum value for $\abs{\nabla\psi}^p$ on $\{s \leq \psi \leq S\}$ for any $S>s$, and $\abs{\nabla \psi}^p$ is subsolution of the elliptic equation $\mathcal{L} {f} = 0$ by \eqref{sub}, Hopf's lemma (see e.g. \cite[Lemma 3.4]{gilbarg}) implies that $\abs{\nabla \psi}^p$ is constant on this region. By the arbitrariness of $S$ we infer that $\abs{\nabla \psi}$ is constant on the whole $\{\psi \geq s\}$ and finally the rigidity part follows as before. 
\end{proof}

\textbf{Acknowledgements.}
The authors would like to thank J. Xiao for his interest in our work and for stimulating discussions during the preparation of the manuscript, and X. Zhong for pointing out some flaws. The authors are members of the Gruppo Nazionale per l’Analisi Matematica, la Probabilità e le loro Applicazioni (GNAMPA) of the Istituto Nazionale di Alta Matematica (INdAM) and are partially founded by the GNAMPA Project “Problemi sovradeterminati e questioni di ottimizzazione di forma". The paper was partially completed during the authors’ attendance to the program “Geometry and relativity” organized by the Erwin Schroedinger International Institute for Mathematics and Physics (ESI).

\end{document}